\newcommand{\gpp}{\mathfrak{g}_P}
\newcommand{\MGC}{\mathcal{G}_{\mathbb{C}}}
\newcommand{\End}{\mathrm{End}\,}
\newcommand{\cpx}{\mathrm{cpx}}
\newcommand{\app}{\mathrm{app}}
\newcommand{\lan}{\langle }
\newcommand{\ran}{\rangle}
\newcommand{\ML}{\mathcal{L}}
\newcommand{\MD}{\mathcal{D}}
\newcommand{\MK}{\mathcal{K}}
\newcommand{\MM}{\mathcal{M}}
\newcommand{\MH}{\mathcal{H}}
\newcommand{\MO}{\mathcal{O}}
\newcommand{\Ker}{\mathrm{Ker}}
\newtheorem{theorem}{Theorem}[section]
\newtheorem{corollary}[theorem]{Corollary}
\newtheorem{definition}[theorem]{Definition}
\newtheorem{proposition}[theorem]{Proposition}
\newtheorem*{remark}{Remark}
\newcommand{\MC}{\mathcal{C}}
\newcommand{\Tr}{\mathrm{Tr}}
\newcommand{\st}{\star}
\newcommand{\we}{\wedge}
\newcommand{\pa}{\partial}
\newcommand{\RP}{\mathbb R^+}
\newcommand{\EBE}{extended Bogomolny equations\;}
\newcommand{\ti}{\times}
\newcommand{\Si}{\Sigma}
\newcommand{\bz}{\bar{z}}
\newcommand{\vp}{\varphi}
\newcommand{\ME}{\mathcal{E}}
\newcommand{\da}{\dagger}
\newcommand{\al}{\alpha}
\newcommand{\na}{\nabla}
\newcommand{\ep}{\epsilon}
\newcommand{\hA}{\widehat{A}}
\newcommand{\hP}{\widehat{\Phi}}
\newcommand{\be}{\beta}
\newcommand{\calC}{\mathcal C}
\newcommand{\calX}{\mathcal X}
\newcommand{\del}{\partial}
\newcommand{\RR}{\mathbb R}
\newcommand{\CC}{\mathbb C}
\newcommand{\lam}{\lambda}
\newcommand{\si}{\sigma}
\newcommand{\CST}{\mathbb{C}^{\times}}
\newcommand{\isu}{i\mathfrak{su}}
\newcommand{\mft}{\mathfrak{t}}
\newcommand{\mfe}{\mathfrak{e}}
\newcommand{\MP}{\mathcal{P}}
\newcommand{\diag}{\mathrm{diag}}
\newcommand{\mo}{\mathrm{mod}}
\newcommand{\MG}{\mathcal{G}}
\newcommand{\slf}{\mathfrak{sl}}
\newcommand{\su}{\mathfrak{su}}
\newcommand{\Lam}{\Lambda}
\newcommand{\SL}{\mathrm{SL}}
\newcommand{\Hit}{\mathrm{Hit}}
\newcommand{\ie}{\mathrm{ie}}
\newcommand{\Ad}{\operatorname{Ad}}
\newcommand{\vpq}{\vp_{\mathbf{q}}}
\newcommand{\vpz}{\vp_{\mathbf{0}}}
\newcommand{\MOP}{\MM_{\mathrm{Oper}}}
\newcommand{\GE}{\mathfrak{g}_E}
\newcommand{\TEBE}{twisted extended Bogomolny equations\;}
\newcommand{\DID}{\MD_i^{\da_H}}
\newcommand{\rank}{\mathrm{rank}}
\newcommand{\Op}{\mathrm{Oper}}
\newcommand{\Opers}{\mathrm{Oper}}
\newcommand{\rflat}{\mathrm{flat}}
\newcommand{\NHC}{\mathfrak{K}_{\mathrm{NAH}}}
\newcommand{\TBE}{\mathrm{TBE}}
\newcommand{\msA}{\mathscr{A}}
\newcommand{\msN}{\mathscr{N}}
\begin{document}
	\title[Opers and the twisted Bogomolny equations]{Opers and the twisted Bogomolny equations}
	\author{Siqi He} 
	\address{Simons Center for Geometry and Physics\\Stony Brook, NY, 11794 USA}
	\email{she@scgp.stonybrook.edu}
	\author{Rafe Mazzeo}
	\address{Department of Mathematics, Stanford University\\Stanford,CA 94305 USA}
	\email{rmazzeo@stanford.edu}
	
	\begin{abstract}
In this paper, we study the dimensionally reduced twisted Kapustin-Witten equations on the product of
a compact Riemann surface $\Si$ with $\RP_y$. The main result is a Kobayashi-Hitchin type correspondence between the
space of tilted Nahm pole solutions and the moduli space of Beilinson-Drinfeld opers. This corroborates a
prediction of Gaiotto and Witten \cite[p.971]{gaiotto2012knot}.
\end{abstract}
	
\maketitle

\medskip

\begin{section}{Introduction}
Let $(M,g)$ denote an oriented Riemannian $4$-manifold and $P$ a principal $SU(n)$ bundle over $M$ with adjoint
bundle $\gpp$.  The twisted Kapustin-Witten (TKW) equations \cite{KapustinWitten2006} are a one-parameter family
of equations, parametrized by $t \in (0,\infty)$, for a pair $(A, \Phi)$, where $A$ is a connection and $\Phi$ is a
$\gpp$-valued $1$-form: 
\begin{equation}
\begin{split}
& F_A-\Phi\we\Phi+\frac{t-t^{-1}}{2}d_A\Phi+\frac{t+t^{-1}}{2}\st d_A\Phi=0, \\
& d_A\st \Phi=0.
\end{split}
\label{TKW}
\end{equation}
When $t=1$, these equations have the particularly simple form 
\begin{equation}
\begin{split}
F_A-\Phi\we\Phi+\st d_A\Phi=0,\quad d_A\st\Phi=0.
\label{KW}
\end{split}
\end{equation}

One important case is when $M=X\ti\RP_y$, where $X$ is a $3$-manifold.  A fascinating proposal of Witten \cite{witten2011fivebranes}
interprets the Jones polynomial of knots in $X$ by counting solutions to \eqref{KW} which satisfy certain `Nahm pole' singularities at $y=0$,
see \cite{gaiotto2012knot,Witten2014LecturesJonesPolynomial,Witten2016LecturesGaugeTheory} for a more detailed explanation, along with \cite{MazzeoWitten2013,MazzeoWitten2017,He2017,Taubescompactness,LeungRyosuke2018} for analytic theory related to this program.
A similar program using the TKW equations \eqref{TKW} to approach the Jones polynomial is also discussed in
\cite{witten2011fivebranes, gaiotto2012knot}; a clearer formulation appears in \cite{mikhaylov2018teichmuller}, where the singular
boundary conditions appropriate for these equations when $t \neq 1$ are called the tilted Nahm pole boundary condition. We describe
these below. 

We consider here the dimensionally reduced TKW equations on $\Si \times \RP_y$, where $\Si$ is a compact Riemann surface.   These are
the TKW equations on $S^1 \times \Si \ti \RP_y$ for fields which are invariant in the $S^1$ direction.   These fields consist of a connection
$A$, a $\gpp$-valued $1$-form $\phi$ and $\gpp$-valued $0$-forms $A_1$ and $\phi_1$; the corresponding `twisted Bogomolny equations'
take the form 
\begin{equation}
\begin{split}
&F_A-\phi\we\phi+\frac{t-t^{-1}}{2}d_A\phi-\frac{t+t^{-1}}{2}(\st d_A\phi_1+\st[\phi,A_1])=0,\\
&d_AA_1-[\phi,\phi_1]+\frac{t-t^{-1}}{2}(d_A\phi_1+[\phi,A_1])-\frac{t+t^{-1}}{2}\st d_A\phi=0,\\
&d_A^{\st}\phi-[\phi_1,A_1]=0.
\label{KW2}
\end{split}
\end{equation}

Write $t=\tan(\frac{\pi}{4}-\frac{3}{2}\beta)$.  It was observed by Gaiotto and Witten in \cite{gaiotto2012knot} that with the assumption
$A_1-\tan\beta\phi_1=0$, the twisted Bogomolny equations have a Hermitian-Yang-Mills structure, leading them to conjecture 
that there should be a Donaldson-Uhlenbeck-Yau type theorem in this setting.   Such a result is now fully understood in the special
case $\beta = 0$ ($t=1$), cf.\ \cite{HeMazzeo2017,HeMazzeo2018,HeMazzeo2019}. The results in those two papers give a precise
correspondence in this spirit between flat $\SL(2,\CC)$ connections over $\Si$ and solutions of the \EBE converging to these connections as $y \to \infty$
and satisfying certain singular boundary conditions at $y=0$.  More precisely, a flat $\SL(2,\RR)$ connection in the Hitchin section at
infinity corresponds to solutions of the \EBE satisfying the Nahm pole boundary conditions at $y=0$; an arbitrary stable Higgs pair (equivalently,
a flat $\SL(2,\CC)$ connection), together with a holomorphic line subbundle, corresonds to a solution of the \EBE satisfying the Nahm pole
boundary conditions with extra singularities along a divisor determined by the line bundle and the Higgs field.   It is the generalization of the
former of these two theorems which is the subject of this paper;  a full generalization awaits a better understanding of the knot singularities
for the tilted Nahm pole boundary conditions.

The study of \eqref{KW2} when $\beta\neq 0$ $(t\neq 1)$ is motivated by the Atiyah-Floer approach to the Kapustin-Witten equations, see
\cite{ManolescuAbouzaid17,ManolescuCote18} for recent progress. As pointed out in \cite[Section 3]{gaiotto2012knot}, this
Atiyah-Floer approach has physical obstructions and is unstable for the equations \eqref{KW} when $t=1$, indicating that it may not be
possible to recover the Jones polynomial entirely from that specialization of the equations. The paper \cite{Gukov2017bps} contains a more
detailed explanation of this.

In any case, we consider here only the cases where $\beta\neq 0$.  Denote by $\MM_{\TBE}^{\beta}$ the space of solutions to the
twisted Bogomolny equations with gauge group $SU(n)$ and with tilted Nahm pole conditions at $y=0$ and a certain boundary condition
to be explained later as $y \to \infty$.  We denote by $\MM_{\Opers}^{\beta}$ the twisted oper moduli space with parameter $\tan\beta$; this
is diffeomorphic to the usual oper moduli space of Beilinson-Drinfield \cite{beilinson2005opers}, and is defined in Section
\ref{Sec_twistedopers}. Using the Hermitian-Yang-Mills structure, Gaiotto and Witten \cite{gaiotto2012knot} define
$$
I^{\beta}_{\Opers}:\MM_{\TBE}^{\beta}\to \MM_{\Opers}^{\beta},
$$
explained in Section \ref{Sec_KobayashiHitchinMap} below, and predict that it is a bijection.  We confirm their prediction here. 
\begin{theorem}
The map $I^{\beta}_{\Opers}$ is a bijection when the genus $g$ of $\Si$ is greater than $1$. 
\begin{itemize} 
\item[i)] For each element in $\MM_{\Opers}^{\beta}$, there exists a solution
to \eqref{KW2} with tilted Nahm pole singularity at $y=0$; 
\item[ii)] If two solutions satisfy tilted Nahm pole boundary condition and have the same image by $I^{\beta}_{\Opers}$, then they
are gauge equivalent.
\end{itemize}
\end{theorem}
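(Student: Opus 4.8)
The plan is to read this as a Donaldson--Uhlenbeck--Yau (Kobayashi--Hitchin) correspondence for the twisted Bogomolny equations, exploiting the Hermitian--Yang--Mills structure that the constraint $A_1-\tan\beta\,\phi_1=0$ confers on \eqref{KW2}. Under this structure a solution is determined by its underlying holomorphic data together with a Hermitian metric $H$, and the complexified gauge-equivalence class of that holomorphic data is precisely what $I^{\beta}_{\Opers}$ records. Thus both halves of the theorem become statements about the metric: part (i) asks that every twisted oper admit a compatible $H$ solving the remaining real (moment-map) equation with tilted Nahm pole asymptotics, while part (ii) asks that such an $H$ be unique up to a unitary gauge transformation. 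I would first reduce \eqref{KW2} under the constraint to a single second-order nonlinear elliptic equation for $H$ of Hitchin type on $\Si\times\RP_y$, in which the two remaining components of \eqref{KW2} become integrability conditions automatically satisfied by the oper, the tilted Nahm pole prescribes the leading boundary behavior of $H$ as $\yrz$, and the oper prescribes its behavior as $\yri$.

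For existence, part (i), I would build an explicit approximate solution $H_{\app}$ that matches the tilted Nahm pole model near $y=0$ -- the local model being the $\beta$-rotated analogue of the flat model used in the $\beta=0$ theory -- and that reproduces the given twisted oper as $\yri$. One then sets up weighted H\"older, or $b$-/edge-type Sobolev, spaces adapted to the boundary $\{y=0\}$, computes the indicial roots of the linearization $\mathcal{L}_{H_{\app}}$ there, and chooses the weight so that $\mathcal{L}_{H_{\app}}$ is Fredholm. The correction $H=H_{\app}\,e^{s}$ is then obtained by a continuity method -- deforming either along the family of oper data, or along $\beta$ starting from the known $t=1$ results of \cite{HeMazzeo2017,HeMazzeo2018,HeMazzeo2019} -- together with uniform a priori estimates. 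The genus hypothesis $g>1$ enters here: it makes the Euler characteristic of $\Si$ negative and forces the relevant Weitzenb\"ock/curvature term in $\mathcal{L}_{H_{\app}}$ to have a favorable sign, so that the linearization has neither kernel nor cokernel in the chosen weighted space and the iteration closes.

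For uniqueness, part (ii), suppose $(A,\phi,A_1,\phi_1)$ and $(A',\phi',A_1',\phi_1')$ satisfy the tilted Nahm pole condition and have the same image under $I^{\beta}_{\Opers}$. Then they share holomorphic data and differ by a complex gauge transformation $g=e^{s}$ with $s$ self-adjoint with respect to one of the metrics. The Hitchin--Simpson computation should yield a second-order elliptic inequality of the form $\Delta f\ge 0$ for a nonnegative function $f$ built from $s$ (measuring the difference of the two moment-map terms), so that $f$ is subharmonic on $\Si\times\RP_y$. Because both solutions carry the \emph{same} tilted Nahm pole model at $y=0$ and the same oper limit as $\yri$, the difference $s$ decays at both ends of $\RP_y$; the maximum principle then forces $s\equiv 0$, i.e.\ $g$ is unitary, which is the asserted gauge equivalence.

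The main obstacle is the boundary analysis at the tilted Nahm pole. Unlike the $\beta=0$ case, the rotation by $\beta$ changes the local model and hence the indicial roots of $\mathcal{L}_{H_{\app}}$ at $\{y=0\}$, so the Fredholm theory, the polyhomogeneous expansion of solutions as $\yrz$, and the decay needed for the maximum principle must all be re-derived for $t\neq 1$. Establishing that the correct weighted spaces make the linearization invertible, and that the nonlinear iteration remains within the function space in which the tilted Nahm pole asymptotics are preserved, is the technical heart of the argument; once this edge/elliptic package is in place, existence and uniqueness follow from the continuity method and the maximum principle as above.
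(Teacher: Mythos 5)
Your overall architecture coincides with the paper's: use the Hermitian--Yang--Mills structure to trade $(A,\phi,\phi_1)$ for fixed holomorphic data (the oper) plus an unknown Hermitian metric $H$, build an approximate solution with tilted Nahm pole asymptotics, develop the weighted H\"older/edge Fredholm theory via indicial roots, solve by a continuity method, and prove uniqueness by a convexity/maximum-principle argument. Where you diverge is in the choice of continuity parameter, and here one of your two suggestions would fail. The paper does \emph{not} deform along the oper family or along $\beta$; it runs Simpson's standard auxiliary-term scheme: writing $H=H_0e^s$, it solves $N_t(s):=\Ad(e^{\frac s2})\Omega_H+ts=0$ for $t\in[0,1]$ (equation \eqref{eqmethodofContinuation}), with an explicit solution at $t=1$ (take $H_0=H_{-1}e^{\kappa}$, $\kappa=\Omega_{H_{-1}}$, $s=-\kappa$), openness from the fact that the $+t$ term makes the linearization $\ML_{t,s}$ injective with dense range (Proposition \ref{linearizationproperties}) and hence an isomorphism by Fredholmness, and closedness from the a priori estimates (Proposition \ref{C0estimate}, Theorem \ref{fullestimate}). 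Your alternative of continuing in $\beta$ from the known $t=1$ theory is genuinely problematic: as $\beta\to 0$ the operator $\MD_2=(D_z+\phi_z\cot\beta)dz$ and the twisted Hitchin equation \eqref{CstarHitchinequation} degenerate, and, more seriously, the boundary data at $y=\infty$ changes type --- for $\beta\neq 0$ solutions limit to opers, while at $\beta=0$ they limit to the flat $\SL(2,\RR)$ connections of the Hitchin section --- so there is no continuous family of boundary-value problems to continue along. Your other alternative (continuation along the oper family from, say, the Fuchsian point) could be made to work, but it needs properness along families of boundary data, an extra layer that the paper only develops afterwards, for the diffeomorphism statement.

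Second, you misattribute the role of $g>1$. It is not a Weitzenb\"ock curvature sign coming from the negative Euler characteristic that makes the linearization invertible: in the paper's scheme invertibility for $t>0$ is automatic from the $+t$ term, with no geometric positivity required. The genus hypothesis enters because for $g>1$ the holonomy of every oper is irreducible and the associated Higgs pair in the Hitchin section is stable; this is what gives $\Ker\,\MD_1\cap\Ker\,\MD_2=\emptyset$ and hence \emph{strict} convexity of the Donaldson functional in the uniqueness argument (Proposition \ref{Prop_injective}), and what makes $\MM_{\Opers}^{\beta}$ itself well behaved. Your uniqueness argument via the maximum principle is essentially sound and is really the pointwise form of the paper's integrated argument: the identity \eqref{Keyequation2} gives $\langle\Omega_H-\Omega_K,s\rangle=\Delta|s|^2+|v(s)\nabla s|^2$, so if both metrics solve the equation then $|s|^2$ is a subsolution, and decay at both ends forces $s\equiv 0$. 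But note that the decay of $s$ as $y\to 0$ is not free: it comes from the indicial-root computation (the first positive indicial root is $2>1$, so $s$ vanishes faster than linearly and all boundary terms die), which, as you correctly flag, must be redone for the tilted ($\beta\neq 0$) model; the paper carries this out and finds the roots $\{-(n-1),\dots,-1,2,\dots,n\}$, the same as in the untilted case.
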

	
There is an identification of $\MM^{\beta}_{\Opers}$ with $\oplus_{i=2}^nH^0(K^{i})$, where $K$ is the canonical bundle over $\Si$, which
gives a topology and differential structure to this space. 
\begin{theorem}
The map $I_{\Opers}^{\beta}$ is a diffeomorphism.
\end{theorem}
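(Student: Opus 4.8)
The plan is to exploit the bijectivity of $I^{\beta}_{\Opers}$ established in the previous theorem and upgrade it to a diffeomorphism by showing that the map is smooth with an everywhere-invertible differential; the inverse function theorem, together with global bijectivity, then yields the result. The first task is to equip $\MM_{\TBE}^{\beta}$ with a smooth manifold structure in the standard gauge-theoretic manner. Fixing a solution $(A,\phi,A_1,\phi_1)$, I would impose a Coulomb gauge-fixing condition and study the deformation complex obtained by linearizing \eqref{KW2}, working in weighted function spaces (of type $\IFC$) adapted to the tilted Nahm pole singularity at $y=0$ and to the prescribed asymptotics as $\yri$. The key analytic input, which I expect to have been established in the existence half of the argument, is that the linearized operator $\LKW$ is Fredholm on these spaces and, crucially, has vanishing cokernel. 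Vanishing of $\Coker\,\LKW$ makes $\MM_{\TBE}^{\beta}$ a smooth manifold near each solution, and an index computation identifies its dimension with $\dim_{\RR}\bigoplus_{i=2}^n H^0(K^i)$, matching the dimension of $\MM^{\beta}_{\Opers}$.

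Next I would verify that $I^{\beta}_{\Opers}$ is itself smooth. By construction the oper attached to a solution is obtained by extracting the coefficients of the characteristic polynomial of the complexified Higgs field, equivalently the holomorphic data in $\bigoplus_{i=2}^n H^0(K^i)$ of the associated $\SL(n,\CC)$ connection. This extraction is an algebraic, hence real-analytic, operation in the fields which depends continuously in the relevant topology, so smoothness of $I^{\beta}_{\Opers}$ reduces to elliptic regularity for the gauge-fixed equations and the smooth dependence of the solution on its defining data.

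The heart of the proof is showing that the differential $dI^{\beta}_{\Opers}$ is an isomorphism at every point. I would identify $T_{(A,\phi)}\MM_{\TBE}^{\beta}$ with $\Ker\,\LKW$ modulo the image of the linearized gauge action, i.e.\ with the degree-one cohomology of the deformation complex, and $T\MM^{\beta}_{\Opers}$ with $\bigoplus_{i=2}^n H^0(K^i)$. The linearized map sends an infinitesimal deformation of the solution to the induced infinitesimal change in the holomorphic oper coefficients. Injectivity should follow from a vanishing/Weitzenb\"ock argument: a deformation in the kernel of $dI^{\beta}_{\Opers}$ produces no change in the holomorphic data and, after integration by parts using the tilted Nahm pole boundary behavior, must be a pure gauge direction, hence zero in the tangent space to the moduli space. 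Surjectivity then follows either by matching dimensions through the index computation, or directly from the implicit function theorem applied to the solvability of the linearized equations with prescribed oper variation --- the same surjectivity of $\LKW$ that underlies the existence statement.

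The main obstacle will be the boundary analysis at the tilted Nahm pole. Establishing the Fredholm property and, above all, the vanishing of the cokernel requires a careful study of the indicial roots of $\LKW$ at $y=0$ and the choice of weight exponent $\delta$ that isolates the geometrically meaningful deformations while excluding spurious boundary growth. The tilt parameter $\beta$ shifts these indicial roots relative to the $\beta=0$ case and must be tracked throughout the analysis. I expect the corresponding work for $\beta=0$ in \cite{HeMazzeo2017,HeMazzeo2018} to supply the template, with the tilt entering as a controlled perturbation of the indicial structure.
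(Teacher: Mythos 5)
Your overall strategy --- give both moduli spaces smooth structures, show $I_{\Opers}^{\beta}$ is smooth, and show $dI_{\Opers}^{\beta}$ is everywhere an isomorphism so that the inverse function theorem plus the already-established bijectivity finishes the job --- is a legitimate route, but it is genuinely different from the paper's. The paper never sets up the gauge-theoretic deformation complex of the twisted Bogomolny equations: there is no Coulomb gauge slice, no Fredholm theory for the full linearized operator $\LKW$, and no index computation anywhere in the text. Instead, the paper proves properness of $I_{\Opers}^{\beta}$ (Proposition \ref{Prop_proper}), using the fact that the approximate solutions of Theorem \ref{existadmissble1} depend continuously on $\mathbf{q}$ together with the uniform a priori estimates of Theorem \ref{fullestimate}; combined with bijectivity (Theorem \ref{Thm_surjective} and Proposition \ref{Prop_injective}) and the smooth dependence of the constructed solution on the oper data $\mathbf{q}$ --- which comes from the implicit function theorem applied to the isomorphism $\ML_{t,s}$ proved for openness --- this yields the diffeomorphism. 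In effect, the paper's smooth structure on $\MM_{\TBE}^{\beta}$ is the one transported from $\oplus_{i=2}^n H^0(K^i)$ through the construction, so the differential-topological content is carried by properness and smooth dependence rather than by an intrinsic deformation theory. Your approach, if completed, would buy more: an intrinsic smooth structure on $\MM_{\TBE}^{\beta}$, a dimension formula via index theory, and an honest verification that $dI_{\Opers}^{\beta}$ is an isomorphism. The paper's approach buys brevity, since the only linear analysis it needs is for the scalar-type operator $\ML_H$ acting on sections of $\isu(E,H_0)$.

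The gap in your proposal is the assertion that Fredholmness and vanishing of $\Coker\,\LKW$ ``have been established in the existence half of the argument.'' They have not, and they do not follow from it. The existence proof is a continuity method carried out entirely in the Hermitian-metric formulation: the holomorphic data $(\MD_1,\MD_2,\MD_3)$ are held fixed and one solves the single moment-map equation $\Omega_H=0$ for $H=H_0e^s$; the operator shown to be an isomorphism, $\ML_{t,s}$ acting on $\calX^{k+2}_{\mu,-\delta}(\Si\ti\RP;\isu(E,H_0))$, is the linearization in the metric direction only. The full linearization $\LKW$ of \eqref{KW2} in Coulomb gauge acts on the much larger space of deformations of $(A,\phi,A_1,\phi_1)$, and its Fredholm theory with tilted Nahm pole boundary conditions is a substantial piece of Mazzeo--Witten type analysis which the paper nowhere supplies. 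To complete your argument you would either have to develop that theory from scratch (tracking how $\beta$ shifts the indicial roots, as you anticipate), or prove the standard Kobayashi--Hitchin splitting of the deformation complex into holomorphic-data directions plus metric directions, whereupon the isomorphism for $\ML_{t,s}$ kills the metric contribution and identifies the tangent space of $\MM_{\TBE}^{\beta}$ with the oper deformations. Either of these is real work that your sketch presently delegates to a nonexistent prior result.
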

	
\textbf{Acknowledgements.} The first author thanks Victor Mikhaylov for assistance with the computations in the Appendix and Brian Collier,
Simon Donaldson, Ciprian Manolescu and Du Pei for helpful discussions.  R.M.\ has been supported by the NSF grant DMS-1608223.
\end{section}

\begin{section}{The Twisted Extended Bogomolny Equations}
\begin{subsection}{Hermitian Geometry for the Twisted Bogomolony Equations}
  We write the product metric on $\Si\ti\RP$, where $\Si$ is a compact Riemann surface with genus $g\geq 1$, as $g=g_0^2|dz|^2+dy^2$.
  Let $E$ be a rank $n$ complex vector bundle on this space with $\det E = 0$. An $SU(n)$ structure on $E$ is determined by a Hermitian
  metric $H$. The adjoint bundle is denoted $\GE$. 
	
  Let $A$ be a connection on $E$ and suppose that $\phi\in \Omega^1(\GE)$ and $\phi_1\in\Omega^0(\GE)$. In a unitary gauge defined
  by $H$,  these satisfy $A^{\st}=-A,\;\phi^{\st}=-\phi,\;\phi_1^{\st}=-\phi_1$ where $\st$ is the conjugate transpose defined by $H$.
  Gaiotto and Witten observe in \cite{gaiotto2012knot} that the \TEBE have a Hermitian Yang-Mills structure, and hence there should
  be a Donaldson-Uhlenbeck-Yau type result as in \cite{donaldson1985anti, uhlenbeck1986existence}.  Write $z=z_2+iz_3$ for a local holomorphic
coordinate on $\Si$ and $y$ for the linear coordinate on $\RP$. Then 
\[
d_A=\na_2dx_2+\na_3dx_3+\MD_ydy,\ \mbox{and}\ \ \phi=\phi_2dx_2+\phi_3dx_3 =\frac{1}{2}(\phi_z dz+\phi_{\bz} d\bz).
\]

Following \cite{gaiotto2012knot,witten2011fivebranes}, define
\begin{equation}
\begin{split}
\MD_1=(D_{\bz}-\phi_{\bz}\tan\beta)d\bz,\;
\MD_2=(D_z+\phi_z\cot \beta)dz,\;
\MD_3=D_y-i\frac{\phi_1}{\cos \beta}
\end{split}
\end{equation}
and their adjoints 
\begin{equation}
\begin{split}
\MD_1^{\da}=(D_{z}-\phi_{z}\tan\beta)dz,\;
\MD_2^{\da}=(D_{\bz}+\phi_{\bz}\cot \beta)d\bz,\;
\MD_3^{\da}=D_y+i\frac{\phi_1}{\cos \beta}.
\end{split}
\end{equation}
Using this, the \TEBE can be written in the particularly elegant form
\begin{equation}
\begin{split}
&[\MD_i,\;\MD_j]=0,\;\;i,j=1,2,3,\\
&\frac{i}{2}\Lambda(\cos^2 \beta[\MD_1,\MD_1^{\da}]-\sin^2\beta[\MD_2,\MD_2^{\da}])+[\MD_3,\MD_3^{\da}]=0,
\label{TEBE}
\end{split}
\end{equation}
where $\Lambda:\Omega^{1,1}\to\Omega^0$ is inner product with the K\"ahler form $\frac{i}{2}dz\we d\bz$.
	
Consider the group $\MGC$ of complex gauge transformations of $E$, and set $\MG:=\{g\in\MGC|\, gg^{\st}=1\}$ (where $g^{\st}$ is the conjugate
transpose of $g$), the subspace of unitary gauge transformations, preserving the Hermitian metric $H$.  Any $g\in\MGC$ acts by
\begin{equation*}
\MD_i^{g}:=g^{-1} \circ \MD_i\circ g,\;\MD_i^{\da,g}:=g^{\st}\circ\MD_i\circ (g^{-1})^{\st}.
\end{equation*}

The equations $[\MD_i,\MD_j]=0$ are gauge invariant under the full complex gauge group, while the full system is only invariant
under $\MG$. The second equation in \eqref{TEBE} may be considered as the moment map for the first set of equations.
\end{subsection}

\begin{subsection}{Holomorphic data}
As in Donaldson and Uhlenbeck-Yau \cite{donaldson1985anti,uhlenbeck1986existence, Donaldson1987Infinite}, we begin by interpreting
the geometric meaning of a set of holomorphic data satisfying the $\MGC$ invariant part of the equations. The main theorem is proved later by
finding a solution compatible with any choice of holomorphic data which also solves the other moment map equation. 

Let $E_y$ be the restriction of $E$ to the slice $\Si\times\{y\}$, and consider the $SL(n,\mathbb{C})$ connection $\MD_{\Si_y}=\MD_1+\MD_2$
on $E_y$. The commutation relation $[\MD_1,\MD_2]=0$ is equivalent to the flatness of $\MD_{\Si_y}$, i.e., $\MD_{\Si_y}^2=0$.
Furthermore, $\MD_3$ is a covariant derivative in the $y$ direction, and hence defines a parallel transport in
this `vertical' direction.  The commutation relations $[\MD_1,\MD_3]=[\MD_2,\MD_3]=0$ identify the flat connections $\MD_{\Si_y}$ over the
different slices $E_y$ by parallel transport.
	
Based on this, we define the holomorphic data as a rank $n$ bundle $E$ with $\det(E)=0$, and a system of operators $\Theta=(\MD_1,\MD_2,\MD_3)$
acting on sections of $E$ such that for any smooth function $f$ and section $s$ of $E$, we have
\begin{itemize}
\item $\MD_1(fs) = \bar{\pa}f s + f \MD_1 s$, \;$\MD_2(fs) = \pa f s + f \MD_1 s$,\;$\MD_3(fs) = (\del_{y}f) s + f \MD_3 s$,
\item $[\MD_i,\;\MD_j]=0$ for all $i, j$. 
\end{itemize}
The complex gauge group $\MGC$ acts on $(E,\Theta)$ by $\MD_i\to g^{-1}\circ\MD_i\circ g$.    As above, parallel transport yields the identification
\begin{equation}
\{(E,\Theta)\}/\MGC \cong \{ \mbox{flat connections over} \ \Si\}/\MGC.
\label{generalcomplexstructurequotientGC}
\end{equation}
	
Rather than letting $\MGC$ act on the data set $(E, \Theta)$, it is easier to fix $\Theta$ and let $\MGC$ act on the Hermitian metric.
We thus regard the real moment map in \eqref{TEBE} as an equation for the Hermitian metric. Given $(E,\Theta)$,
a Hermitian metric determines the adjoints $\DID$ of the operators $\MD_i$ by
\begin{itemize}
\item $\bar{\pa}(H(s,s'))=H(\MD_1 s,s')+H(s,\MD_1^{\da_H}s')$,
\item ${\pa}(H(s,s'))=H(\MD_2 s,s')+H(s,\MD_2^{\da_H}s')$,
\item $\pa_{y}(H(s,s'))=H(\MD_3 s,s')+H(s,\MD_3^{\da_H}s')$.
\end{itemize}
Now define the unitary operators and forms
\begin{equation*}
\begin{split}
&\MD_z=\sin^2\beta \, \MD_2+\cos^2\beta\, \MD_1^{\da_H},\;\MD_{\bz}=\cos^2\beta\, \MD_1+\sin^2\beta\, \MD_2^{\da_H},\\
&\phi_z=\sin\beta\cos\beta\, (\MD_2-\MD_1^{\da_H}),\;\phi_{\bz}=\sin\beta\cos\beta\, (\MD_2^{\da_H}-\MD_1),\\
&\MD_y=\frac{\MD_3+\MD_3^{\da_H}}{2},\;\phi_1=\frac{i\cos\beta}{2}\, (\MD_3-\MD_3^{\da_H}).
\end{split}
\end{equation*}
We obtain from these a unitary connection $\na_A:=\MD_z+\MD_{\bz}+\MD_y$ and a triple $(A,\phi,\phi_1)$
called the Chern connection of $(E,\Theta,H)$. 
	
We can express these operators in different gauges. A gauge is called parallel holomorphic if in this trivialization, $\MD_1=\bar{\pa}$
and $\MD_3 = \del_y$, and unitary if in this trivialization the matrices $(A,\phi,\phi_1)$ are unitary.  As in \cite{atiyah1978geometry}, we
record how to relate these two gauge choices. 
	
\begin{proposition}{\cite{HeMazzeo2017}} \label{complexgaugeactionchange}
Given $(E,\Theta, H)$ as above, there is a unique triplet $(A,\phi,\phi_y)$ compatible with the unitary and holomorphic structures. 
In other words, in any unitary gauge, $A^{\st}=-A$, $\phi^{\st}=\phi$, $\phi_1^{\st}=-\phi_1$, while in every parallel holomorphic gauge,
$\MD_1=\overline{\partial}_E$ and $\MD_3=\pa_y$, i.e., $A^{(0,1)}= A_y-i\phi_1=0$.
\end{proposition}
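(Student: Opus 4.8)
The statement is the extended-Bogomolny analogue of the classical fact that a holomorphic structure together with a Hermitian metric determines a unique Chern connection, and conversely that a unitary connection whose $(0,1)$-part is integrable recovers the holomorphic structure; see \cite{atiyah1978geometry}. The plan is therefore to exhibit the triple $(A,\phi,\phi_1)$ explicitly from the data $(E,\Theta,H)$ and then to show that the two gauge normalizations --- unitary and parallel holomorphic --- are related by a single complex gauge transformation obtained from a pointwise factorization of $H$. First I would fix a local parallel holomorphic frame in which $\MD_1=\bpa$ and $\MD_3=\pa_y$, so that $H$ is represented by a smooth positive-definite Hermitian matrix-valued function, and compute the adjoints $\DOD,\DSD,\DTD$ from the defining Leibniz identities: these produce the usual expressions, of the form $\DOD=\pa+H^{-1}\pa H$ and $\DTD=\pa_y+H^{-1}\pa_y H$, with $\DSD$ read off analogously.

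For existence, I would substitute these adjoint formulas into the definitions of $\MD_z,\MD_{\bz},\MD_y$ and of $\phi_z,\phi_{\bz},\phi_1$, which yields explicit expressions for $(A,\phi,\phi_1)$ in the chosen frame. Writing $H=g\,g^{\st}$ for a smooth $g\in\MGC$ (the positive square root, or a Cholesky factor, depending smoothly on $H$) and conjugating by $g$ sends $H$ to the identity; in this new frame the adjoints $\DID$ become the naive conjugate transposes of the $\MD_i$, and a direct check using the $\be$-weighted combinations shows $A^{\st}=-A$ together with the stated Hermiticity of $\phi$ and $\phi_1$. This is exactly the assertion that $\na_A=\MD_z+\MD_{\bz}+\MD_y$ is unitary for $H$, which proves existence of a triple compatible with both structures.

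For the equivalence of the two gauge conditions I would read the connection and Higgs components directly off the definitions: from $\MD_{\bz}=\cos^2\be\,\MD_1+\sin^2\be\,\DSD$ and $\phi_{\bz}=\sin\be\cos\be\,(\DSD-\MD_1)$ the $(0,1)$-data is governed by $\MD_1$ and $\DSD$, while from $\MD_y=\tfrac12(\MD_3+\DTD)$ and $\phi_1=\tfrac{i\cos\be}{2}(\MD_3-\DTD)$ the combination $A_y-i\phi_1$ is governed by $\MD_3$. One then checks that the normalization $\MD_1=\bpa$, $\MD_3=\pa_y$ is precisely equivalent to the vanishing of $A^{(0,1)}$ and of $A_y-i\phi_1$ in that frame, which is the parallel-holomorphic description.

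Uniqueness follows from the rigidity of the construction: the holomorphic data $\Theta$ fixes the $(0,1)$-type part of the object, and the requirement that the connection preserve $H$ then determines the $(1,0)$-part, exactly as for the Chern connection, so any two compatible triples coincide. I expect the main obstacle to be purely organizational: verifying that the $\be$-weighted combinations, with their coefficients $\cos^2\be$, $\sin^2\be$ and $\sin\be\cos\be$, assemble consistently with the adjoint identities so that the output is genuinely an anti-Hermitian connection together with Higgs fields of the prescribed type, rather than checking any single nontrivial analytic estimate; the factorization $H=g\,g^{\st}$ and its smooth dependence on $H$ are standard.
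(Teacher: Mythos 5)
Your proposal is correct and follows essentially the same route as the paper: the paper (citing \cite{HeMazzeo2017}) proves this by exactly the computation you describe, writing $H=g^{\da}g$ with $g=H^{1/2}$ in a parallel holomorphic gauge, computing the adjoints $\MD_i^{\da_H}$ from the metric-compatibility identities, and conjugating by $g$ to read off the unitary-gauge fields as in \eqref{unitarygaugeformula}, with uniqueness following Chern-connection-style from the fact that $\Theta$ fixes the $(0,1)$ and $\pa_y$ parts while $H$-compatibility determines the rest. No substantive difference in approach.
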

In a local holomorphic trivialization of $E$, we represent the metric by a Hermitian matrix which we also denote by $H$.
Write $\MD_1=\bar{\pa}$, $\MD_2=\pa+\al$, $H=g^{\da}g$ for $g=H^{\frac{1}{2}} \in\MGC$.  In holomorphic gauge, 
$$
\MD_1=\bar{\pa},\;\MD_1^{\da}=\pa+H^{-1}\pa H =\pa+g^{-1}(g^{\da})^{-1}(\pa_z g^{\da}) g+g^{-1} \pa_z g.
$$
Thus $g$ transforms from holomorphic to unitary gauge. In unitary gauge, we write these operators as $\MD_i^U$ and $(\MD_i^{\da})^U$,
and then have
\begin{equation}
\begin{split}
&\MD_1^U=\bar{\pa}- (\bar{\pa}g) g^{-1},\;(\MD_1^{\da})^U=\pa+(g^{\da})^{-1}\pa_z g^{\da},\\
&\MD_2^U=\pa+g\al g^{-1}-(\pa g) g^{-1},\;(\MD_2^{\da})^U=\bar{\pa}+(g^{\da})^{-1}\bar{\pa}g^{\da}-(g^{\da})^{-1}\bar{\al}^{T}g^{\da},\\
&\MD_3^U=\pa_y-(\pa_yg) g^{-1},\;(\MD_3^{\da})^{U}=\pa_y+(g^{\da})^{-1}\pa_yg^{\da}.
\end{split}
\end{equation}
	
Thus in unitary gauge, 
\begin{equation}
\begin{split}
&A_z=\sin^2\beta(g\al g^{-1}-(\pa g) g^{-1})+\cos^2\beta (g^{\da})^{-1}\pa_zg^{\da},\;A_{\bz}=-A_z^{\da},\\
&\phi_z=\sin\beta\cos\beta(g\al g^{-1}-(\pa g) g^{-1}-(g^{\da})^{-1}\pa_zg^{\da}),\;\phi_{\bz}=-(\phi_z)^{\da},\\
&A_y=\frac{1}{2}(- (\pa_yg) g^{-1}+(g^{\da})^{-1}\pa_y g),\;\phi_1=\frac{i\cos\beta}{2}(-(\pa_yg) g^{-1}-(g^{\da})^{-1}\pa_yg^{\da}).
\label{unitarygaugeformula}
\end{split}
\end{equation}
	
\end{subsection}
\end{section}
	
\begin{section}{Higgs bundles, the Teichm\"uller component and the space of opers}
	\label{Sec_twistedopers}
\subsection{The nonabelian Hodge correspondence}
\subsubsection{The de Rham moduli space}
As always, assume that $E$ is a bundle of rank $n$ with $\det(E)=0$ over the Riemann surface $\Si$ and $\na$ a connection on $E$.
This connection is irreducible if there is no parallel subbundle and completely reducible if every $\na$-invariant subbundle
has an $\na$-invariant complement. If $\MC_{\rflat}$ denotes the space of flat connections, then its tangent space $T_\na\MC_{\rflat}$
consists of the set of $\sigma\in \Omega^1(\End(E))$ such that $\na+\si$ is flat to first order, i.e., $T_\na \MC_{\rflat}:
=\{\si\in\Omega^1(\mathrm{End}(E))|\ \na \si=0\}$. The complex structure on $\SL(n,\CC)$ induces a complex structure $J$ on
$\MC_{\rflat}$, $J(\si)=i\si$.
	
Complex gauge transformations $g(\na):=g^{-1}\circ \na\circ g$ are the smooth automorphisms of $E$ acting trivially on
$\det(E)$. The de Rham moduli space of flat connections is 
$$
\MM_{\rflat}:=\{\na\in\MC_{\rflat}|\ \na \ \mbox{is completely reducible}\}/\MGC.
$$
The complex structure $J$ is preserved by the $\MGC$ action, and hence induces a complex structure on $\MM_{\rflat}$. 
	
\subsubsection{The Higgs bundle moduli space }	
Recall that an $\SL(n,\mathbb{C})$-Higgs bundle over the Riemann surface $\Si$
is a pair $(\ME,\vp)$, where $\ME$ is a vector bundle $E$ with holomorphic structure $\bar{\pa}_{E}$ and $\vp$ is a traceless
holomorphic $(1,0)$-form, $\bar{\pa}_E \vp=0$. We restrict to Higgs bundles with $\det(\ME)=0$. If $\MH$ denotes the set of
$\SL(n,\mathbb{C})$-Higgs bundles on $E$, then 
$$
T_{(\bar{\pa}_E,\vp)}\MH=\{(a,b)\in\Omega^{0,1}(\End E)\oplus \Omega^{1,0}(\End E)|\ \bar{\pa}_E b+[\vp,a]=0\}.
$$
A complex structure $I$ on $\MH$ is defined by $I(a,b)=(ia,ib)$, and the complex gauge group action is
$g(\ME,\vp):=(g^{-1}\circ \bar{\pa}_E\circ g,g^{-1}\circ\Phi\circ g)$.

A Higgs pair $(\ME,\vp)$ with $\deg \ME = 0$ is called stable if 
$\deg (V)<0$ for any holomorphic subbundle $V$ with $\varphi(V)\subset V\otimes K$, and polystable if it is a direct sum 
of stable Higgs pairs. We define the Higgs bundle moduli space $\MM_{\mathrm{Higgs}}$ by 
$$
\MM_{\mathrm{Higgs}}:=\{(\ME,\vp)\in\MH|\ (\ME,\vp)\ \mbox{is polystable}\}/\MGC.
$$ 
	
One of the most important features is the Hitchin fibration map 
\begin{equation}
\begin{split}
&\pi:\MM_{\mathrm{Higgs}}\to \oplus_{i=2}^n H^0(\Si,\;K^{i})\\
&\pi(\vp)=(p_2(\vp),\;\cdots,\;p_{n}(\vp)),
\label{HitchinFiberationMap}
\end{split}
\end{equation}
where $\det (\lambda -\vp) = \sum \lambda^{n-j} (-1)^j p_{j}(\vp)$. By \cite{hitchin1987stable}, this map
is proper.

\subsubsection{The nonabelian Hodge correspondence}
Given a Higgs bundle $(\ME,\vp)$ the Hitchin equations are equations for the Hermitian metric $H$
\begin{equation}
\begin{split}
F_H+[\vp,\vp^{\da_H}]=0,\;\bar{\pa}_{E}\vp=0,
\label{Hitchinequation}
\end{split}
\end{equation}
where $F_H$ is the curvature of the Chern-connection, $\vp^{\da}$ is the conjugation of $\vp$ w.r.t the Hermitian metric $H$.

\begin{theorem}{\cite{Hitchin1987Selfdual,Corlette88}}
For any Higgs pair $(\ME,\vp)$ on $\Si$, there exists an irreducible solution $H$ to the Hitchin equations 
if and only if this pair is stable, and a reducible solution if and only if it is polystable.
\end{theorem}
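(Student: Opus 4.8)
The assertion is the nonabelian Hodge correspondence of Hitchin and Simpson, so my plan follows the standard existence theory for the Hitchin equations. Since $\bar\pa_E\vp=0$ holds by definition of a Higgs bundle, the entire content is to produce a Hermitian metric $H$ solving the curvature equation $F_H+[\vp,\vp^{\da_H}]=0$. I would first dispose of the easy implication, that existence of such an $H$ forces polystability. Given any holomorphic subbundle $V\subset\ME$ invariant under $\vp$, i.e.\ $\vp(V)\subset V\oti K$, let $\pi_V$ be the $H$-orthogonal projection onto $V$. A Chern--Weil computation expresses $\deg V$ as a universal multiple of $\int_\Si\Tr\!\big(\pi_V\,i\Lambda(F_H+[\vp,\vp^{\da_H}])\big)$ minus the manifestly nonnegative quantity $\|\bar\pa\pi_V\|_{L^2}^2+\|[\vp,\pi_V]\|_{L^2}^2$. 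Substituting the Hitchin equation kills the first term, whence $\deg V\le 0$, with equality forcing $\pi_V$ to be parallel and $\vp$-invariant. Thus a solution always yields polystability, and the absence of such a splitting (stability) corresponds exactly to irreducibility of the solution.

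The substantial direction is that stability (resp.\ polystability) forces the existence of an irreducible (resp.\ reducible) solution, and this is where the analysis lives. I would use Donaldson's heat-flow method. Fix a background metric $H_0$ and write $H=H_0 e^{s}$ with $s$ a trace-free $H_0$-self-adjoint endomorphism, the trace-free condition reflecting $\det\ME=0$. Run the gradient flow of the Donaldson functional $M(H_0,H)$, namely
\[
H^{-1}\partial_t H=-\big(i\Lambda F_H+[\vp,\vp^{\da_H}]\big),
\]
starting from $H_0$. Short-time existence is standard parabolic theory, and applying the maximum principle to the evolution equation satisfied by $|i\Lambda F_H+[\vp,\vp^{\da_H}]|$ gives both long-time existence and a uniform pointwise bound on this energy density for all $t$.

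The crux is convergence as $t\to\infty$, which reduces to an a priori $C^0$ bound for $s$: the Donaldson functional is convex along geodesics in the space of metrics and nonincreasing along the flow, and given $\sup_\Si|s|\le C$ one bootstraps to $C^\infty$ convergence to a smooth solution. Here stability enters through the Uhlenbeck--Yau argument, which I expect to be the main obstacle. If $\sup_\Si|s_t|\to\infty$, then after rescaling the sequence $s_t/\|s_t\|_{L^1}$ converges weakly in $L^2_1$ to a nonzero self-adjoint limit $s_\infty$; the regularity theory for weakly holomorphic subbundles identifies a spectral projection of $s_\infty$ with a saturated, $\vp$-invariant subsheaf of $\ME$ whose degree is nonnegative, contradicting stability. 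Hence $\sup_\Si|s|$ is bounded, the flow converges, and the limiting metric solves the Hitchin equations; stability rules out invariant splittings and so gives an irreducible solution. For the polystable case one writes $(\ME,\vp)$ as a direct sum of stable Higgs bundles, solves on each summand, and takes the orthogonal direct sum metric, producing a reducible solution. Conversely, the easy direction above shows that reducibility of the solution is equivalent to the Higgs pair being merely polystable, completing the equivalences.
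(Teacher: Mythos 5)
This theorem is not proved in the paper at all: it is quoted as a classical result of Hitchin \cite{Hitchin1987Selfdual} (with \cite{Corlette88} supplying the converse, harmonic-metric half of the nonabelian Hodge correspondence, which the statement itself does not require). So there is no internal proof to compare against; the relevant comparison is with the cited literature. Your sketch is a correct outline of the standard existence theory and matches it: the necessity direction via the Chern--Weil formula $\deg V \le c\int_\Si \Tr\bigl(\pi_V\, i\Lambda(F_H+[\vp,\vp^{\da_H}])\bigr)$ minus the nonnegative term $\|\bar{\pa}\pi_V\|^2+\|[\vp,\pi_V]\|^2$, and the sufficiency direction via the Donaldson heat flow, the maximum principle on the energy density for long-time existence, the $C^0$ estimate as the crux, and the Uhlenbeck--Yau construction of a destabilizing $\vp$-invariant subsheaf from the renormalized blow-up limit when that estimate fails. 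This is precisely Simpson's proof (and Donaldson's in spirit); Hitchin's original argument in the cited paper instead minimizes the Yang--Mills--Higgs functional over the complex gauge orbit and invokes Uhlenbeck weak compactness, so among the cited routes yours is the parabolic one rather than the variational one, but both are standard and your outline contains the essential ingredients, including the trace-free normalization forced by $\det\ME=0$ and the direct-sum construction in the polystable case.
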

	
To any solution $H$ to the Hitchin equations is associated a flat $\mathrm{SL}(n,\CC)$ connection
$D = \nabla_H + \vp + \vp^{\da_H}$, and hence a representation $\rho: \pi_1(\Si) \to \mathrm{SL}(n,\CC)$,
which is well-defined up to conjugation.  Irreducibility of the solution is the same as irreducibility of the representation,
while reducibility corresponds to the fact that $\rho$ is reductive.   The map from flat connections back
to solutions of the Hitchin system involves finding a {\em harmonic metric} which yields a decomposition of $D = D^{\mathrm{skew}} 
+ D^{\mathrm{Herm}}$ into skew-Hermitian and Hermitian parts, so that $D^{\mathrm{Herm}} = \vp + \vp^{\star_H}$
and $( (D^{\mathrm{skew}})^{0,1}, \vp)$ satisfies Hitchin equations. The culmination of the work of Hitchin, Donaldson, Simpson and Corlette is the diffeomorphic equivalence between
the spaces of stable Higgs pairs, irreducible solutions of the Hitchin equations and irreducible flat connections; there is a similar equivalence for the polystable/reducible spaces. 

In summary, the non-abelian Hodge correspondence says that the map
\begin{equation}
\begin{split}
\NHC&:\MM_{\mathrm{Higgs}}\to\MM_{\rflat},\\
\NHC&(\ME,\vp)=A_H+\vp+\vp^{\da_H},
\end{split}
\end{equation}
is a diffeomorphism where $H$ solves the Hitchin equations and $A_H$ is the Chern-connection of $\ME$ and $H$.
	
\subsubsection{Hyperk\"ahler structure}
The moduli space $\MM_{\mathrm{Higgs}}$ has important geometric structure. The tangent space of $\MM_{\mathrm{Higgs}}$ at each point is a subspace of
$\Omega^{0,1}(\End E)\oplus \Omega^{1,0}(\End E)$ at that point, so it carries a hyperk\"ahler structure as in \cite{Hitchin1987Selfdual}:
ifet $a\in\Omega^{0,1}(\End E)$ and $b\in\Omega^{1,0}(\End E)$, then 
\begin{equation}
\begin{split}
I(a,b)=(ia,ib),\;J(a,b)=(ia^{\st},-ib^{\st}),\;K(a,b)=(-a^{\st},b^{\st}).
\label{hyperkahlerIJK}
\end{split}
\end{equation}

\subsection{The Hitchin component and opers}
In this subsection, we introduce the Hitchin component and the moduli space of opers.

\subsubsection{$\SL(n)$-Hitchin component}
Choose a spin structure $K^{\frac{1}{2}}$; then, for any $\mathbf{q}:=(q_2,\cdots,q_{n})$, define the Higgs bundle $(\ME, \vp_{\mathbf{q}})$ by
\begin{equation}
\begin{aligned}
\ME: & =S^{n-1}(K^{-\frac{1}{2}}\oplus K^{\frac{1}{2}})=K^{-\frac{n-1}{2}}\oplus K^{-\frac{n-1}{2}+1}\oplus\cdots\oplus K^{\frac{n-1}{2}} \\[5mm]
\vp_{\mathbf{q}} & =\begin{pmatrix}
0 & \sqrt{B_1} & 0 &\cdots& 0\\
0 & 0 & \sqrt{B_2} & \cdots& 0\\
\vdots & \vdots &\ddots & &\vdots\\
0 & \vdots & &\ddots &\sqrt{B_{n-1}}\\
q_{n}& q_{n-1} & \cdots & q_2 &0
\end{pmatrix},
\end{aligned}
\label{HitchincomponentHiggsfield}
\end{equation}
where $B_{i}=i(n-i)$ and $\sqrt{B_i}$ in the $(i,i+1)$ entry represents that multiple of the natural isomorphism $K^{-\frac{n-1}{2} + i  } \to 
K^{-\frac{n-1}{2} + i -1 }\otimes K$ and the maps along the bottom row are $H^0(\Si,K^{n-i}) \ni q_{n-i} :K^{- \frac{n-1}{2} + i }\to K^{ \frac{n-1}{2}} \otimes K$. 

The complex gauge orbit of this family of Higgs bundle,
\begin{equation}
\{(\ME:=S^{n-1}(K^{-\frac{1}{2}}\oplus K^{\frac{1}{2}}),\ \vp\ \mbox{as in}\ \eqref{HitchincomponentHiggsfield})\}/\MGC.
\end{equation}
is called the Hitchin component (or the Hitchin section), and denoted $\MM_{\mathrm{Hit}}$. Note that when $n$ 
is odd, only even powers of $K^{1/2}$ appear, so $\MM_{\mathrm{Hit}}$ is independent of the choice of spin structure in that case. 
We consider the map $p_{\Hit}$ defined as
\begin{equation}
\begin{split}
p_{\Hit}&: \mathcal B_{\mathrm{Hit}} := \oplus_{i=2}^{n}H^0(\Si,K^{i})\to \MM_{\mathrm{Higgs}},\\
p_{\Hit}&(q_2,\cdots,q_{n}):=\left[(\ME,\vp_{\mathbf{q}})\right],
\label{Hitchinsectionmap}
\end{split}
\end{equation}
with $(\ME,\vp_{\mathbf{q}})$ as in \eqref{HitchincomponentHiggsfield}. The space $\mathcal B_{\mathrm{Hit}}$ is called
the Hitchin base, and the image of $p_{\Hit}$ is the aforementioned Hitchin component $\MM_{\mathrm{Hit}}$.

\begin{theorem}{\cite{hitchin1992lie}}
 \begin{itemize}
\item[(1)] Every element in $\MM_{\mathrm{Hit}}$ is a stable Higgs pair, and $\MM_{\mathrm{Hit}}$ is parametrized by the vector space 
$\oplus_{i=2}^{n}H^0(\Si,K^{i})$;
\item[(2)] the map assigning to each element of $\oplus_{i=2}^{n}H^0(\Si,K^{i})$ the unique corresponding solution of the Hitchin equations
 is an equivalence from this vector space to one component of the moduli space of flat completely irreducible $\SL(n,\mathbb{R})$
  connections. The restriction of the Hitchin fibration map \eqref{HitchinFiberationMap}
$$
\pi|_{\MM_{\mathrm{Hit}}}:\MM_{\mathrm{Hit}}\to  \mathcal B_{\mathrm{Hit}} 
$$ 
is a diffeomorphism. 
\item[(3)] $p_{\Hit}$ is a holomorphic embedding into $\MM_{\mathrm{Higgs}}$ with respect to the holomorphic structure $I$.
\end{itemize}
\end{theorem}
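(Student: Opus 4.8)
The plan is to treat the three assertions in increasing order of difficulty, disposing of the stability statement (1) and the holomorphicity (3) as the more elementary parts, and reserving the reality and connected-component content of (2) as the crux.

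First I would establish stability. Write $\ME=\bigoplus_{i=1}^n M_i$ with $M_i=K^{-(n-1)/2+(i-1)}$; since the exponents sum to zero, $\det\ME=\mathcal O$, consistent with $\deg\ME=0$. At the base point $\mathbf q=0$ the Higgs field $\vpz$ is the regular nilpotent whose only nonzero entries are the nowhere-vanishing superdiagonal isomorphisms $\sqrt{B_i}:M_{i+1}\to M_i\otimes K$. Its invariant subbundles are then exactly the flags $W^{(k)}=M_1\oplus\cdots\oplus M_k$, and a direct computation gives $\deg W^{(k)}=\tfrac12 k(k-n)(2g-2)<0$ for $1\le k\le n-1$ (using $g>1$), so $(\ME,\vpz)$ is stable. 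To reach arbitrary $\mathbf q$ I would use the grading automorphism $g_\lambda=\diag(\lambda^{n-1},\ldots,\lambda,1)$: conjugation shows $\lambda\, g_\lambda^{-1}\vp_{\mathbf q}\,g_\lambda=\vp_{\mathbf q'}$ with $q'_m=\lambda^m q_m$, so $(\ME,\vp_{\mathbf q})$ and $(\ME,\vp_{\mathbf q'})$ differ by a complex gauge transformation and an overall nonzero scaling of the Higgs field, both of which preserve the set of invariant subbundles and hence the stability status. The $\CC^\times$-orbit of any $\mathbf q$ accumulates at $0$ as $\lambda\to0$; since stability is an open condition and $\mathbf q=0$ is stable, nearby orbit points are stable, and as stability is constant along the orbit, $(\ME,\vp_{\mathbf q})$ is stable for all $\mathbf q$. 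Finally, because $\vp_{\mathbf q}$ is in companion form, its characteristic coefficients $p_i(\vp_{\mathbf q})$ equal the $q_i$ up to fixed nonzero constants; thus $\pi|_{\MM_{\Hit}}$ becomes the identity on $\mathcal B_{\mathrm{Hit}}$ after this linear identification, which yields the parametrization, the injectivity of $p_{\Hit}$, and the diffeomorphism in (2).

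For (2) I would feed each stable pair into the existence theorem of Hitchin and Corlette cited above to obtain the unique harmonic metric $H$ and the flat connection $D=\nabla_H+\vp_{\mathbf q}+\vp_{\mathbf q}^{\da_H}$, hence a representation $\rho:\pi_1(\Si)\to\SL(n,\CC)$. The key point is that $\rho$ lands in the split real form $\SL(n,\RR)$. The mechanism is the principal three-dimensional subgroup: $\ME=S^{n-1}(K^{-1/2}\oplus K^{1/2})$ is the bundle associated to the principal $\SL(2)\hookrightarrow\SL(n)$, and $S^{n-1}(\CC^2)$ carries an $\SL(2)$-invariant nondegenerate bilinear form together with a compatible real structure. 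I would check that $\vp_{\mathbf q}$ is real and self-adjoint for this form, and then invoke the \emph{uniqueness} of the harmonic metric to conclude that $H$ is compatible with the real structure; this forces $D$ to preserve a real form, so $\rho$ is conjugate into $\SL(n,\RR)$. Stability supplies irreducibility of $D$, hence complete irreducibility of $\rho$. To identify the image with a full connected component I would combine three inputs: the real dimension of $\mathcal B_{\mathrm{Hit}}$, namely $2(g-1)(n^2-1)$, equals that of the $\SL(n,\RR)$ character variety; the composite $\NHC\circ p_{\Hit}$ is a proper injective immersion (properness from properness of $\pi$, injectivity and immersivity from the companion-form computation above); and the target component is connected. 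Invariance of domain makes the image open, properness makes it closed, and connectedness then pins it down as a single component.

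Statement (3) is the most direct: the holomorphic structure $\bar{\pa}_E$ is fixed while $\mathbf q\mapsto\vp_{\mathbf q}$ is complex-linear, so the differential of $p_{\Hit}$ sends $\delta\mathbf q$ to the tangent vector $(0,\vp_{\delta\mathbf q})\in\Omega^{0,1}(\End E)\oplus\Omega^{1,0}(\End E)$, on which $I(a,b)=(ia,ib)$ acts as multiplication by $i$, matching multiplication by $i$ on $\mathbf q$; hence $p_{\Hit}$ is holomorphic, and being an injective immersion with closed image it is a holomorphic embedding. I expect the genuine obstacle to be the reality statement in (2): extracting $\SL(n,\RR)$-monodromy from uniqueness of the harmonic metric, and then upgrading the injective immersion to an identification with an entire connected component, is where the real content—the invariant bilinear form of the principal $\SL(2)$, the properness of the Hitchin map, and the dimension count—must all be deployed together, whereas the stability argument in (1) and the holomorphicity in (3) are comparatively routine.
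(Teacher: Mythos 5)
The paper itself gives no proof of this statement --- it is quoted directly from \cite{hitchin1992lie} --- so your proposal has to be measured against Hitchin's original argument, which your outline does follow in broad strokes. Parts (1) and (3) are essentially correct: the degree computation, the $\CC^{\times}$-scaling trick combined with openness of stability, and the observation that the characteristic coefficients recover $\mathbf{q}$ (giving injectivity, immersivity, and the identification of $\pi|_{\MM_{\mathrm{Hit}}}$) are all standard and sound. One small repair: the $\vpz$-invariant subbundles are not literally ``exactly the flags''; the correct statement, which is all the estimate needs, is that a rank-$k$ invariant subbundle $V$ satisfies $(\vpz|_V)^k=0$ by nilpotency, hence $V\subset W^{(k)}=\ker\vpz^k$ and $\deg V\le \deg W^{(k)}<0$.

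The genuine gap is the pivotal self-adjointness claim in (2). You propose to use the $\SL(2)$-invariant bilinear form on $S^{n-1}(\CC^2)$ and to ``check that $\vpq$ is \dots self-adjoint for this form.'' This is false, and not by accident: invariance of a form under the principal $\SL(2)$ says precisely that the image of $\slf_2$ acts by \emph{anti}-self-adjoint endomorphisms of that form, so the leading term $\mfe^+$ of $\vpq$ has the wrong sign, and the correction terms (powers of $\mfe^+$ in the Kostant picture) alternate in type; $\vpq$ is neither self- nor anti-self-adjoint for the invariant form. Worse, when $n$ is even the invariant form on $S^{n-1}(\CC^2)$ is skew-symmetric (for $n=2$ it is the symplectic form itself), and the metric-uniqueness mechanism applied to a skew pairing produces a parallel anti-linear map squaring to $-1$, i.e.\ a quaternionic structure and a reduction to $SU^*(n)=\SL(n/2,\mathbb{H})$ rather than to $\SL(n,\RR)$. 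What Hitchin actually uses is the \emph{non-invariant} symmetric antidiagonal pairing $Q$ on $\oplus_{i} K^{-\frac{n-1}{2}+i-1}$ (equivalently, the Cartan involution $X\mapsto -X^{t_Q}$ of the split real form, adapted to the principal $\slf_2$): with respect to $Q$, both $\mfe^{\pm}$ and every power $(\mfe^+)^k$ are self-adjoint, so the Kostant form of the section, $\mfe^-+\sum_{i}q_i(\mfe^+)^{i-1}$, is manifestly $Q$-self-adjoint for every $n$. Note also that the companion-matrix normalization used in this paper is \emph{not} $Q$-self-adjoint once $n\ge 3$ (the $q_2$ entry already breaks it), so one must first pass to the Kostant form by an explicit gauge transformation. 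With those substitutions, your appeal to uniqueness of the harmonic metric, and your open--closed--connectedness argument with the dimension count $2(g-1)(n^2-1)$, do go through and reproduce Hitchin's proof.
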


\subsubsection{Opers}
We begin with the definition of opers from \cite{beilinson2005opers}.
\begin{definition}
\label{Def_Opers}
An $\SL(n)$-oper on a Riemann surface $\Si$ is a triple $(E,F_\bullet,\na)$, where $E$ is a rank $n$ holomorphic bundle
with $\det(E)=0$, $\na$ is a flat connection and $F_\bullet$ is a complete filtration of $E$ by holomorphic subbundles, 
$0=F_0\subset F_1\subset \cdots \subset F_n=E$, satisfying
\begin{itemize}
\item for any section $s$ of $F_i$, $\nabla s$ is a section of $F_{i+1}\otimes K$;
\item the induced map $\na:F_{i}/F_{i-1}\to F_{i+1}/F_i\otimes K$ is an isomorphism of line bundles, $i = 1, \ldots, n-1$.
	\end{itemize}
\end{definition}

The complex gauge group acts naturally on the space of opers and we define the oper moduli space
$$
\MOP:=\{(E,F_\bullet,\nabla)\}/\MGC.
$$
\begin{proposition}{\cite{wentworth2014higgs}}
If the genus of $\Si$ is greater than $1$, then 
\begin{itemize} 
\item[i)] the holonomy representation of an $\SL(n)$-oper is irreducible;
\item[ii)] setting $\ML\cong E/F_{n-1} $, then $det(F_j)\cong \ML^j\otimes K^{\frac{nj-j(j+1)}{2}}$ and $\ML^n\cong \MK^{\frac{-n(n-1)}{2}}$;
\item[iii)] the oper structure on $E$ is uniquely determined by $\ML$; in particular, the isomorphism class of $\ML$ is
fixed on every connected component of $\MOP$. 
\end{itemize}
\end{proposition}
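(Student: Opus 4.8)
My plan is to extract everything from the graded pieces $L_i := F_i/F_{i-1}$ of the oper filtration. The defining isomorphisms $\na\colon L_i \xrightarrow{\sim} L_{i+1}\otimes K$ give $L_{i+1}\cong L_i\otimes K^{-1}$, hence $L_i \cong L_1\otimes K^{-(i-1)}$ and $\ML = E/F_{n-1} = L_n \cong L_1 \otimes K^{-(n-1)}$; crucially, since $\deg K = 2g-2 > 0$ for $g>1$, the degrees $d_i := \deg L_i$ are strictly decreasing. Part (ii) is then a direct computation and needs no genus hypothesis: since $\det E \cong \mathcal O$, multiplying the $L_i$ gives $L_1^n \cong K^{n(n-1)/2}$, and substituting $L_1 \cong \ML\otimes K^{n-1}$ yields $\ML^n\cong K^{-n(n-1)/2}$; similarly $\det F_j = \bigotimes_{i=1}^j L_i = L_1^j\otimes K^{-j(j-1)/2}$, and re-expressing $L_1$ through $\ML$ produces the stated determinant relations.

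For part (i) I would argue irreducibility by contradiction: suppose $W\subset E$ is a proper nonzero $\na$-invariant holomorphic subbundle of rank $r$, $0<r<n$. The heart of the argument is that the oper condition forces the index set $S=\{\,i: \rank(W\cap F_i) > \rank(W\cap F_{i-1})\,\}$ to be an up-set. Indeed, if $i\in S$ with $i\le n-1$, choose a local holomorphic section $s$ of $W\cap F_i$ with nonzero image in $L_i$; then $\na s$ lies in $W\otimes K$ by invariance and in $F_{i+1}\otimes K$ by the oper condition, while its image in $L_{i+1}\otimes K$ is nonzero because the graded map $L_i\to L_{i+1}\otimes K$ is an isomorphism. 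Thus $\na s$ witnesses a rank jump at step $i+1$, so $i+1\in S$. Hence $S=\{n-r+1,\dots,n\}$ and $W$ is built from exactly the bottom $r$ graded pieces, giving
\[ \deg W = \sum_{i=n-r+1}^n d_i = -(g-1)\,r(n-r) < 0. \]
On the other hand $(W,\na|_W)$ is flat, forcing $\deg W = 0$, a contradiction. This is precisely where $g>1$ enters, through $\deg K>0$; for $g=1$ the graded degrees coincide and the argument collapses, consistent with the reducibility possible in that case.

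For part (iii) there are two assertions: that the filtered bundle $(E,F_\bullet)$ is determined by $\ML$, and that $[\ML]$ is locally constant on $\MOP$. The second follows quickly from (ii), since $\ML^n\cong K^{-n(n-1)/2}$ places $\ML$ in a torsor under the finite group $\mathrm{Jac}(\Si)[n]\cong (\mathbb Z/n)^{2g}$, so the continuous assignment $\MOP\to \mathrm{Pic}(\Si)$, $(E,F_\bullet,\na)\mapsto [E/F_{n-1}]$, lands in a discrete set and is therefore constant on each connected component. The first assertion is the substantive point, and I expect it to be the main obstacle: I would establish it via the Drinfeld--Sokolov normal form, putting $\na$ in companion form with unit subdiagonal (after normalizing the graded isomorphisms) and top-row coefficients $q_2,\dots,q_n$ with $q_i\in H^0(\Si,K^i)$. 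In this description $(E,F_\bullet)$ is the jet bundle built canonically from $\ML$, with $F_\bullet$ the jet filtration, so it depends only on $\ML$, whereas the $q_i$ deform the connection $\na$ but not the filtered holomorphic bundle. The care needed is in verifying that the oper axioms force exactly this normal form --- equivalently, that the extension classes of $0\to F_{i-1}\to F_i\to L_i\to 0$ are rigidified by the nonvanishing second fundamental forms coming from $\na$ --- and this rigidity is the one place where a genuine argument, rather than bookkeeping, is required.
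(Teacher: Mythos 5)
First, a point of reference: the paper does not actually prove this proposition --- it is quoted from \cite{wentworth2014higgs} --- so your proposal is measured against the standard arguments in that reference rather than against anything in the text. Your parts (i) and (ii) are correct, and are essentially those standard arguments. Two small remarks. In (ii), your computation yields $\det F_j \cong \ML^j\otimes K^{nj-\frac{j(j+1)}{2}}$; the exponent $\frac{nj-j(j+1)}{2}$ printed in the statement is a typo (at $j=n$ it would force $\ML^n\cong K^{n/2}$, contradicting the statement's own $\ML^n\cong K^{-n(n-1)/2}$), and your version is the internally consistent one, so you have in fact corrected the statement rather than reproduced it. In (i), the one imprecision is the equality $\deg W=\sum_{i=n-r+1}^n d_i$: the graded pieces of the induced filtration of $W$ are only rank-one subsheaves of the $L_i$, so you get $\deg W\le \sum_{i=n-r+1}^n d_i=-(g-1)r(n-r)$; but this inequality is all you need, since flatness of $W$ forces $\deg W=0$. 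The up-set argument for $S$ (the rank of $W\cap F_\bullet$ must keep jumping once it jumps, because the graded maps are isomorphisms) is correct and is the crux; it is also where $g>1$ enters, exactly as you say.

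The genuine gap is the first assertion of (iii), and you have flagged it yourself: you reduce ``the filtered bundle $(E,F_\bullet)$ depends only on $\ML$'' to the claim that the oper axioms force the Drinfeld--Sokolov normal form, and then defer that claim. As written this is a restatement of what must be proved, not a proof. The standard way to close it, consistent with your jet-bundle picture, is the following. Define $\Phi:E\to J^{n-1}(\ML)$ by sending $v\in E_x$ to the $(n-1)$-jet at $x$ of $\pi(s_v)$, where $s_v$ is the unique local $\na$-flat section with $s_v(x)=v$ and $\pi:E\to \ML=E/F_{n-1}$ is the projection; this is a holomorphic bundle map since flat sections depend holomorphically on initial conditions. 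Then prove, by induction on $k$ using precisely the two oper axioms, that for a flat section $s$ one has $j^k_x(\pi(s))=0$ if and only if $s(x)\in F_{n-1-k}$. (For instance, for $n=2$: in a local frame adapted to $F_1$ with connection forms $\alpha_{ij}$, flatness gives $d f_2=-f_1\alpha_{21}-f_2\alpha_{22}$ where $f_2=\pi(s)$, and the oper condition says $\alpha_{21}$ is nowhere vanishing, so $f_2(x)=df_2(x)=0$ forces $f_1(x)=0$.) Taking $k=n-1$ shows $\Phi$ is fiberwise injective, hence an isomorphism by rank count, and the same statement identifies $F_j$ with the kernel of the truncation $J^{n-1}(\ML)\to J^{n-1-j}(\ML)$, i.e., with the jet filtration. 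Since the jet bundle and its filtration are functorial in $\ML$, the filtered bundle is determined by $\ML$, which is the assertion. The alternative route you gesture at --- computing that each extension group $H^1(L_{i+1}^{-1}\otimes F_i)$ is at most one-dimensional (only the piece $H^1(K)$ survives, as $H^1(K^m)=0$ for $m\ge 2$ when $g>1$) and that the extensions are non-split --- also works, but non-splitness itself then requires an argument, so the jet map is cleaner. Your second assertion in (iii), local constancy of $[\ML]$ via the finiteness of $n$-th roots of $K^{-n(n-1)/2}$ in the Jacobian, is correct as stated.
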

\begin{remark}
  There are precisely $n^{2g}$ possibilities for the line bundle $\ML$, corresponding to the $n^{2g}$ ways of lifting a
  monodromy representation in $\mathrm{PSL}(n,\mathbb{C})$ to $\SL(n,\mathbb{C})$; these choices label the different
  components of $\MOP$. We fix one such choice and take $\ML =K^{-(n-1)/2}$.
\end{remark}

The next result identifies the Hitchin section with the space of opers.

\begin{theorem}{\cite{beilinson2005opers}}
  \label{ClassificationofOpers}
Consider a Higgs bundle $(\ME=S^{n-1}(K^{-\frac{1}{2}}\oplus K^{\frac{1}{2}}),\vpq)\in\MM_{\mathrm{Hit}}$. Let $h_{\mathbf{q}}$ be the
solution to the Hitchin equation \eqref{Hitchinequation} for the Higgs field $(\ME,\vp_{\mathbf{q}})$ with $\mathbf{q} \in \mathcal B_{\mathrm{Hit}}$
and Chern connection $D_{h_{\mathbf{q}}}$. Write $F_0=0$, $F_i= K^{-\frac{n-1}{2}}\oplus\cdots\oplus K^{-\frac{n-1}{2}+i-1}$ for $1\leq i\leq n$ and $F_{\bullet}$ 
the associated filtration.  Define $\na_{\mathbf{q}} = D_{h_{\mathbf{0}}} + \vpq + \vp_{\mathbf{q}}^{\dag_{h_{\mathbf{0}}}}$.
      Then $\MM_{\Opers}\cong \{\nabla_{\mathbf{q}}\}/\MGC$. In other words, this family of flat connections exhausts the entire space of opers. 
\end{theorem}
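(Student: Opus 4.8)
The plan is to prove that $\mathbf{q}\mapsto[\na_{\mathbf{q}}]$ is a well-defined bijection from the Hitchin base $\mathcal B_{\mathrm{Hit}}=\oplus_{i=2}^n H^0(\Si,K^i)$ onto $\MM_{\Opers}$; combined with Hitchin's parametrization of $\MM_{\mathrm{Hit}}$ by $\mathcal B_{\mathrm{Hit}}$ \cite{hitchin1992lie}, this yields the asserted isomorphism. The organizing device is a \emph{filtered holomorphic gauge}: trivialize $E$ compatibly with the splitting $\ME=S^{n-1}(K^{-1/2}\oplus K^{1/2})=\oplus_i K^{-\frac{n-1}{2}+i-1}$ and the standard filtration $F_i=K^{-\frac{n-1}{2}}\oplus\cdots\oplus K^{-\frac{n-1}{2}+i-1}$, so that $\bar{\pa}_E$ is block diagonal and the connection matrix of $\na_{\mathbf{q}}$ can be analyzed block by block against the $\CST$-grading by powers of $K$. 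The key structural fact is that the principal nilpotent built into $\ME$ (the $\sqrt{B_i}=\sqrt{i(n-i)}$ entries) ultimately supplies the oper isomorphisms, while the holomorphic differentials $\mathbf{q}$ supply the remaining companion data.

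\textbf{Well-definedness (into opers).} Using Proposition \ref{complexgaugeactionchange} to pass between the unitary metric $h_{\mathbf{0}}$ and a parallel holomorphic gauge adapted to $F_\bullet$, I would gauge the anti-holomorphic part $\vp_{\mathbf{q}}^{\dag_{h_{\mathbf{0}}}}$ into the $\pa$-connection, bringing $\na_{\mathbf{q}}$ to a connection of companion form. Such a connection is automatically flat on the curve $\Si$ once its $(1,1)$ curvature component vanishes, and that vanishing must be arranged using the Hitchin equation satisfied by $h_{\mathbf{0}}$ (the naive connection $D_{h_{\mathbf{0}}}+\vp_{\mathbf{q}}+\vp_{\mathbf{q}}^{\dag_{h_{\mathbf{0}}}}$ has curvature $[\vp_{\mathbf{q}},\vp_{\mathbf{q}}^{\dag_{h_{\mathbf{0}}}}]-[\vp_{\mathbf{0}},\vp_{\mathbf{0}}^{\dag_{h_{\mathbf{0}}}}]$, so a rescaling is genuinely needed). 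In companion form the subdiagonal entries are the principal-nilpotent isomorphisms furnished by the $\sqrt{B_i}$ structure of $\ME$, which is exactly the isomorphism condition of Definition \ref{Def_Opers}; the Hessenberg shape gives Griffiths transversality $\na F_i\subset F_{i+1}\otimes K$; and the remaining free entries are the holomorphic differentials $q_i\in H^0(\Si,K^i)$. Hence $[\na_{\mathbf{q}}]\in\MM_{\Opers}$.

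\textbf{Surjectivity.} Given an arbitrary oper $(E,F_\bullet,\na)$ in the fixed component $\ML=K^{-(n-1)/2}$, I would invoke the structural result of Wentworth \cite{wentworth2014higgs}: the isomorphisms $F_i/F_{i-1}\xrightarrow{\sim}F_{i+1}/F_i\otimes K$ force $\mathrm{gr}(E)\cong\oplus_i K^{-\frac{n-1}{2}+i-1}$ and $\det F_j\cong\ML^j\otimes K^{(nj-j(j+1))/2}$, whence $E\cong\ME$ with $F_\bullet$ the standard filtration. Placing $\na$ in the filtered holomorphic gauge and normalizing the subdiagonal isomorphisms to $\sqrt{B_i}$ via the residual block-diagonal ($\CST$-grading) gauge freedom puts $\na$ in companion form; the surviving data is a tuple $(q_2,\dots,q_n)\in\oplus_{i=2}^n H^0(\Si,K^i)$, so that $[\na]=[\na_{\mathbf{q}}]$. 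This shows the family exhausts $\MM_{\Opers}$.

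\textbf{Injectivity and the main obstacle.} Injectivity follows because the $q_i$ are $\MGC$-invariants of the companion form — up to normalization they are the coefficients $p_i$ of the characteristic polynomial, i.e.\ the image under the Hitchin fibration \eqref{HitchinFiberationMap} — so $[\na_{\mathbf{q}}]=[\na_{\mathbf{q}'}]$ forces $\mathbf{q}=\mathbf{q}'$; the only subtlety is checking that the residual grading gauge used to normalize the subdiagonal does not mix the $q_i$, which holds because they carry distinct $K$-weights. I expect the genuine difficulty to be concentrated in the well-definedness step: verifying that the metric-defined object $\na_{\mathbf{q}}=D_{h_{\mathbf{0}}}+\vp_{\mathbf{q}}+\vp_{\mathbf{q}}^{\dag_{h_{\mathbf{0}}}}$ is actually $\MGC$-equivalent to the flat holomorphic oper. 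This is precisely the self-duality/conformal-limit phenomenon, and it is where the Hitchin equation solved by $h_{\mathbf{0}}$, together with the irreducibility of opers from \cite{wentworth2014higgs} (valid since $g>1$), must be used in an essential way; the remaining bundle-theoretic and gauge-invariance arguments are comparatively formal.
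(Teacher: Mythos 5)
The paper never proves this statement: Theorem \ref{ClassificationofOpers} is imported wholesale from \cite{beilinson2005opers} (see also \cite{wentworth2014higgs}), so there is no internal argument to compare against. Your three-step plan (well-definedness, surjectivity via a filtered/companion normal form, injectivity via invariance of the $q_i$) is the standard Beilinson--Drinfeld/Kostant-slice strategy, and the surjectivity and injectivity sketches are broadly reasonable (one quibble: the $q_i$ of an oper are the coefficients of the associated scalar differential operator, not the image under the Hitchin fibration \eqref{HitchinFiberationMap}, which is an invariant of Higgs bundles rather than of flat connections). The difficulty is that the whole weight of the theorem sits on the step you yourself call the main obstacle, and your proposed route through it cannot work.

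You correctly compute that $\na_{\mathbf{q}}=D_{h_{\mathbf{0}}}+\vpq+\vpq^{\da_{h_{\mathbf{0}}}}$ has curvature $[\vpq\we\vpq^{\da_{h_{\mathbf{0}}}}]-[\vpz\we\vpz^{\da_{h_{\mathbf{0}}}}]=[Q\we Q^{\da_{h_{\mathbf{0}}}}]$ with $Q:=\vpq-\vpz$, which is nonzero whenever $\mathbf{q}\neq 0$; so, as literally written, $\na_{\mathbf{q}}$ is not flat and defines no point of $\MM_{\Opers}$. But you then propose to repair this by a change of gauge (``gauge $\vpq^{\da_{h_{\mathbf{0}}}}$ into the $\pa$-connection, bringing $\na_{\mathbf{q}}$ to companion form''). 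That is impossible in principle: curvature transforms tensorially, $F_{g(\na)}=g^{-1}F_{\na}\,g$, so no $\MGC$-transformation makes a non-flat connection flat; and the remark that a connection is ``automatically flat once its $(1,1)$ curvature component vanishes'' is vacuous on a curve, where every $2$-form is of type $(1,1)$. The only repair is to change the connection itself, which is what the paper tacitly does in its construction of approximate solutions, where the oper is written $\na_{\mathbf{q}}^{\beta}=\bar{\pa}+\pa^{\da_{h_0}}+\vpq+\vpz^{\da_{h_0}}$: the correct formula carries the adjoint of the \emph{uniformizing} Higgs field $\vpz$, not of $\vpq$. Flatness of this corrected connection is then an honest computation that your proposal never performs: after the holomorphicity cancellations, $F=\bigl(F_{h_{\mathbf{0}}}+[\vpz\we\vpz^{\da_{h_{\mathbf{0}}}}]\bigr)+[Q\we\vpz^{\da_{h_{\mathbf{0}}}}]$, where the first bracket vanishes by the Hitchin equation \eqref{Hitchinequation} for $h_{\mathbf{0}}$, and the second vanishes because $h_{\mathbf{0}}$ is diagonal (so $\vpz^{\da_{h_{\mathbf{0}}}}$ is a conformal multiple of the principal lowering element $\mfe^-$) and $Q$ must take values in $\Ker(\ad\,\mfe^-)$, i.e.\ in the lowest-weight vectors $(\mfe^-)^{j-1}$. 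This last cancellation is sensitive to where the $q_j$ sit: for the bottom-row placement of \eqref{HitchincomponentHiggsfield} one has $[\mfe^-,E_{n,j}]=-\sqrt{B_{j-1}}\,E_{n,j-1}\neq 0$ for $j\geq 2$, so for $n\geq 3$ one must first pass to the Kostant slice by a holomorphic gauge transformation before the curvature dies. In short, the ``comparatively formal'' parts of your argument are indeed formal, but the deferred step is exactly where the theorem lives, and the mechanism you offer for it fails.
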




\subsection{Twistings}
Both the Hitchin equations and opers may be `twisted' by a nonzero complex number $w$.  These twisted analogues are equivalent
to the untwisted ones by a constant gauge transformation, but these are the objects most naturally compatible with the twisted \EBE.

\subsubsection{Twisted opers}
Define $\mathbf{q}_w =(w^2q_2,w^3q_3,\cdots,w^nq_n)$ where $\mathbf{q} \in \mathcal B_{\mathrm{Hit}}$ and
$w\in\mathbb{C}^{\times}$. 
Note that $\vp_{\mathbf{q}}$ is conjugated to $\vp_{\mathbf{q}_w}$ up to a constant by the constant gauge transformation
$g_w=\diag(w^{\frac{n-1}{2}},w^{\frac{n-3}{2}},\cdots, w^{-\frac{n-1}{2}})$.
The associated oper is \begin{equation}
\nabla^w_{\mathbf{q}} = g_w^{-1}\nabla_{\mathbf{q}}g_w=\nabla_{\mathbf{q}_w}=D_{h_{\mathbf{0}}}+w^{-1}\vpq+w\vpq^{\da_{h_{\mathbf{0}}}}.
\label{Operconnectionw}
\end{equation}  It follows directly
from Theorem \ref{ClassificationofOpers} that for any oper $\nabla$ and any $w \in \CC^\times$, there exists a unique $\mathbf{q}$
such that $\nabla=\na_{\mathbf{q}}^w$.     Just as for the case $w=1$, we can define the twisted oper moduli space
$\mathcal M^w_{\Opers}$ carefully below.

\subsubsection{The twisted Hitchin equations}
Let $(\ME,\vp,H)$ be a solution of the Hitchin equations. 
Defining $\MP_1:=\MD_{\bz}d\bz$, $\MP_2=\vp=i\phi_zdz$, we can rewrite the Hitchin equations as 
\begin{equation}
[\MP_1,\MP_2]=0,\;\Lambda([\MP_1,\MP_1^{\da_H}]+[\MP_2,\MP_2^{\da_H}])=0.
\end{equation}

The map $\NHC:\MM_{\rm{Higgs}}\to\MM_{\rflat}$ introduced earlier associates to any stable Higgs bundle $(\ME, \vp)$ the
flat connection $\na_A + \vp+\vp^{\da_H}$, where $H$ is the solution of the Hitchin equations and  $A_H$ is its Chern connection $A_H$.
However,  if $w\in\CST$,  we can define the twisted operators 
\[
\MP_1^{w}:=(\MD_{\bz}-w\phi_{\bz})d\bz,\ \ \MP_2^w:=(\MD_z+w^{-1}\phi_z)dz,
\]
where $d_A=D_zdz+D_{\bz}d\bz$ and $\phi=\phi_zdz+\phi_{\bz}d\bz$. Then $\na^w =\MP_1^w+\MP_2^w$ is also a flat connection.
This gives a family of maps: 
$$
\NHC^w:\MM_{\mathrm{Higgs}}\to\MM_{\rflat}, \qquad w \in \CST.
$$
	
The twisted operators $\MP_1^w,\;\MP_2^w$ lead to a new system of $w$-twisted Hitchin equations 
\begin{equation}
[\MP_1^{w},\MP_2^w]=0,\qquad \Lambda([\MP_1^{w},(\MP_1^w)^{\da_H}]-|w|^2[\MP_2^{w},(\MP_2^{w})^{\da_H}])=0.
\label{CstarHitchinequation}
\end{equation}
As before, given a flat connection $\na:=\MP_1^w+\MP_2^w$, \eqref{CstarHitchinequation} is an equation for the Hermitian metric,
and is equivalent to the standard untwisted Hitchin equations.
		
\begin{proposition}
	Define the $w$-twisted oper moduli space 
	\begin{equation}
	\begin{split}
	\MM^w_{\Opers}:=\{(\nabla,H)|\ \nabla \in \MM_{\Opers},\;H\;solves\;\eqref{CstarHitchinequation}\}/\MG,
	\end{split}
	\end{equation}
	\begin{itemize}
		\item [i)] For any $\na\in\MM_{\Opers}$, there exists a unique $H$ solving
		\eqref{CstarHitchinequation}; 
		\item [ii)] $\MM^w_{\Opers}$ is diffeomorphic to $\MM_{\Opers}$; 
		\item [iii)]\cite[Section 3]{KapustinWitten2006} $\MM^w_{\Opers}$ is a holomorphic symplectic submanifold with respect to the complex structure
		\begin{equation}
		I_w=\frac{1-\bar{w}w}{1+\bar{w}w}I+\frac{i(w-\bar{w})}{1+\bar{w}w}J+\frac{w+\bar{w}}{1+\bar{w}w}K.
		\end{equation}
	\end{itemize}
\end{proposition}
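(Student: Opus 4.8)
The plan is to reduce everything to the equivalence between the $w$-twisted and the untwisted Hitchin equations already asserted above, and then to feed this into the existence and uniqueness theorems of Hitchin, Simpson, Corlette and Donaldson; the organizing idea is that prescribing a metric $H$ solving \eqref{CstarHitchinequation} is precisely the device that cuts the complex gauge freedom of $\MM_{\Opers}$ down to the unitary gauge freedom of $\MM^w_{\Opers}$. For part i), I would regard the oper $\na \in \MM_{\Opers}$ as a fixed flat connection and read \eqref{CstarHitchinequation} as an equation for $H$ alone. The first step is to pin down the asserted equivalence: the twisted operators $\MP_1^w,\MP_2^w$ are built from the Chern data of $H$ with the Higgs part rescaled by $w^{\mp 1}$, and a constant conjugation of the type $g_w$ appearing in \eqref{Operconnectionw} identifies $\na = \MP_1^w+\MP_2^w$ with a genuine nonabelian-Hodge connection $\na_A + \vp' + (\vp')^{\da_H}$ for a rescaled Higgs field $\vp'$; under this identification \eqref{CstarHitchinequation}, with its asymmetric $|w|^2$ weighting, becomes the ordinary harmonic-metric equation \eqref{Hitchinequation} for $(\na_A^{0,1},\vp')$. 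By the cited proposition of \cite{wentworth2014higgs}, the holonomy of an oper is irreducible when $g>1$, so $\na$ is irreducible, and the theorem of \cite{Hitchin1987Selfdual,Corlette88} then produces a harmonic metric, unique because the stabilizer of an irreducible connection is only the finite center of $\SL(n,\CC)$. Transporting back through the conjugation gives the unique $H$ solving \eqref{CstarHitchinequation}.

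For part ii), I would study the forgetful map $\Pi:\MM^w_{\Opers}\to\MM_{\Opers}$, $[(\na,H)]_{\MG}\mapsto[\na]_{\MGC}$. It is well defined because $\MG\subset\MGC$ acts compatibly on the pair, and it is surjective by part i), since each oper class admits a representative equipped with its unique solution metric. For injectivity, if $\na'=g\cdot\na$ with $g\in\MGC$, then $g$ transports the solution metric of $\na$ to one for $\na'$, which by the uniqueness in part i) must coincide with $H'$; this metric-matching forces $g$ to lie in $\MG$ modulo the center, so the two pairs are $\MG$-equivalent. Finally $\Pi$ is a diffeomorphism: $\MM_{\Opers}$ is smooth and identified with $\oplus_{i=2}^n H^0(K^i)$ through Theorem \ref{ClassificationofOpers}, and the solution metric $H(\na)$ depends smoothly on $\na$ by the implicit function theorem applied to the elliptic Hitchin operator, whose linearization is invertible at an irreducible solution; hence $\Pi^{-1}$ is smooth as well.

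Part iii) is attributed to \cite[Section 3]{KapustinWitten2006}, so I would only indicate the mechanism. The complex structure $I_w$ is exactly the member of the twistor family of the hyperk\"ahler structure \eqref{hyperkahlerIJK} attached to the parameter $w$, i.e.\ the unique complex structure in which the twisted flat connection $\na=\MP_1^w+\MP_2^w$ is the natural holomorphic object. Realizing $\MM^w_{\Opers}$ as the image of the oper locus under the $w$-twisted Chern-connection map $\NHC^w$ inside the hyperk\"ahler moduli space, one checks that this image is cut out holomorphically with respect to $I_w$ and that the twistor-family holomorphic symplectic form restricts nondegenerately to it; the detailed verification is the content of \cite{KapustinWitten2006}.

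I expect the main obstacle to be the bookkeeping in the first step of part i): writing down the constant conjugation that converts the $w$-twisted data into genuine Hitchin data and checking that the real moment map \eqref{CstarHitchinequation}, with its $|w|^2$-weighted commutators, matches \eqref{Hitchinequation} on the nose. Once this dictionary is fixed, existence and uniqueness are a black-box application of nonabelian Hodge theory, and the only essential geometric input is the irreducibility of opers---hence the hypothesis $g>1$---needed for uniqueness of the harmonic metric.
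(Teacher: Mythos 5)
Your overall skeleton --- invoke irreducibility of oper holonomy for $g>1$, feed the problem into nonabelian Hodge theory for existence and uniqueness of $H$, and let ii) and iii) follow formally (with iii) deferred to Kapustin--Witten) --- is the same as the paper's, whose proof simply cites Proposition \ref{solutioncstarhitchin}, Theorem \ref{Thm_existenceofCstarHitchin} and Proposition \ref{Prop_uniqueness}. However, the step you dismiss as ``bookkeeping'' is where your argument genuinely breaks: no constant conjugation of the type $g_w$ can convert \eqref{CstarHitchinequation} into \eqref{Hitchinequation}. The equation \eqref{CstarHitchinequation} is gauge covariant: if $g$ is constant, conjugating $\MP_1^w,\MP_2^w$ and transforming $H$ accordingly conjugates the entire left-hand side, so the weight $|w|^2$ survives untouched; indeed, once $\na$ is fixed and $\MP_1^w,\MP_2^w$ are just its $(0,1)$ and $(1,0)$ parts, the equation for $H$ depends only on $|w|$, whereas conjugation by $g_w$ merely replaces $\na_{\mathbf{q}}$ by another flat connection. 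Concretely, the twisted oper connection \eqref{Operconnectionw}, $D_{h_{\mathbf{0}}}+w^{-1}\vpq+w\vpq^{\da_{h_{\mathbf{0}}}}$, is \emph{not} of nonabelian-Hodge form $\na_A+\vp'+(\vp')^{\da_H}$ with $\vp'=w^{-1}\vpq$, because $(w^{-1}\vpq)^{\da_H}=\bar{w}^{-1}\vpq^{\da_H}\neq w\vpq^{\da_H}$ unless $|w|=1$; and in this paper $w=\tan\beta$ is real, so $|w|\neq 1$ is exactly the relevant case (moreover $h_{\mathbf{0}}$ is the harmonic metric for $\vp_{\mathbf{0}}$, not for $\vpq$). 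The true dictionary, Proposition \ref{solutioncstarhitchin}, is not a gauge transformation: it mixes the operators with their $H$-adjoints,
\begin{equation*}
\MD_{\bz}=\tfrac{1}{1+|w|^2}\MP_1^w+\tfrac{|w|^2}{1+|w|^2}(\MP_2^w)^{\da_H},\qquad
\phi_z=\tfrac{w}{1+|w|^2}\bigl(\MP_2^w-(\MP_1^w)^{\da_H}\bigr),
\end{equation*}
and therefore depends on the unknown metric itself. That is precisely why existence is a Corlette-type theorem (Theorem \ref{Thm_existenceofCstarHitchin}, which builds an auxiliary irreducible operator and applies Theorem \ref{Thm_CorletteExistence}) rather than an algebraic identification followed by a black-box citation.

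The same gap propagates into your uniqueness claim: ``the stabilizer of an irreducible connection is the finite center'' is the correct reason harmonic metrics are unique, but it can only be invoked after the $w$-twisted problem has been correctly identified with a harmonic-metric problem, which your constant-conjugation step does not accomplish; two solutions $H_1,H_2$ of \eqref{CstarHitchinequation} for the same $\na$ a priori produce two \emph{different} Higgs bundles under the dictionary, so one still needs an injectivity statement. The paper sidesteps this by proving uniqueness for every $w$ directly (Proposition \ref{Prop_uniqueness}): along geodesics $H_t=Ke^{ts}$ the Donaldson-type functional satisfies
\begin{equation*}
\frac{d^2}{dt^2}\MM(H_t,K)=\int_{\Si}\cos^2\beta\,|\MP_1^ws|^2+\sin^2\beta\,|\MP_2^ws|^2,
\end{equation*}
which is strictly positive when $\na$ is irreducible. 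If you replace your reduction step by Proposition \ref{solutioncstarhitchin}/Theorem \ref{Thm_existenceofCstarHitchin} and your uniqueness step by this convexity argument, then the remainder of your proposal --- the forgetful map for ii) and the twistor-family description of $I_w$ for iii) --- goes through essentially as in the paper.
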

\begin{proof}
	For i), existence and uniqueness follows from Proposition \ref{solutioncstarhitchin} and \ref{Prop_uniqueness}, and ii) follows directly from  i).
	For iii), We can understand opers in terms of the hyperk\"ahler structure, cf.\ \cite[Section 3]{KapustinWitten2006}.
	Fixing $w\in \CST$, choose the holomorphic coordinates $A_z+w^{-1}\phi_z,\;A_{\bz}-w\phi_{\bz}$  for $\MM^w_{\Opers}$.
	Then $\MM^w_{\Opers}$ is holomorphic symplectic with respect to 
	\begin{equation}
	I_w=\frac{1-\bar{w}w}{1+\bar{w}w}I+\frac{i(w-\bar{w})}{1+\bar{w}w}J+\frac{w+\bar{w}}{1+\bar{w}w}K.
	\end{equation}
\end{proof}		
		
\subsubsection{Existence and uniqueness}

We next consider \eqref{CstarHitchinequation} from the perspective of moment maps: given a flat connection satisfying a suitable stability
condition, we wish to find a Hermitian metric solving \eqref{CstarHitchinequation}.
	
\begin{proposition}
Suppose that $(\ME,\vp)$ is a stable Higgs bundle and $H_0$ the corresponding harmonic metric; write the associated fields as $(A_{H_0},\phi_{H_0})$.
Define $\MP_1^{w}:=(D_{\bz}-w\phi_{\bz})d\bz$ and $\MP_2^w:=(\MD_z+w^{-1}\phi_z)dz$. Then $(\MP_1^w,\MP_2^w,H_0)$ solves
\eqref{CstarHitchinequation}. 

Conversely, let $(\MP_1^w,\MP_2^w,H)$ solve the $\CST$-Hitchin equations with parameter $w\in\CST$, and define
$$
\MD_{\bz}=\frac{1}{1+|w|^2}\MP_1^w+\frac{|w|^2}{1+|w|^2}(\MP_2^w)^{\da_H},\;\phi_z=\frac{w}{1+|w|^2}(\MP_2^w-(\MP_1^w)^{\da_H}).
$$
Then $(\MD_{\bz},\phi_z,H)$ solves the untwisted Hitchin equations.
\label{solutioncstarhitchin}
\end{proposition}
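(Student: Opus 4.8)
The plan is to treat this as a purely algebraic equivalence at a fixed Hermitian metric $H$: both \eqref{CstarHitchinequation} and the untwisted Hitchin equations are equations for the same metric, and the two directions amount to an invertible, $w$-dependent change of the operators that intertwines the two moment-map equations. First I would record the decomposition of the twisted operators in terms of the Chern connection $d_A=\pa_A+\bpa_E$ and the Higgs field $\vp$: writing out $\MP_1^w$ and $\MP_2^w$ exhibits $\MP_1^w$ as a $(0,1)$-operator built from $\bpa_E$ and $\vp^{\da_H}$ and $\MP_2^w$ as a $(1,0)$-operator built from $\pa_A$ and $\vp$, with the Hermitian adjoints $(\MP_1^w)^{\da_H}$ and $(\MP_2^w)^{\da_H}$ the complementary combinations. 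This makes the connection part and the zeroth-order (Higgs) part of each operator explicit, and exhibits $\na^w=\MP_1^w+\MP_2^w$ as the flat connection $\NHC^w(\ME,\vp)$.

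For the forward direction I would substitute the fields $(A_{H_0},\phi_{H_0})$ and verify \eqref{CstarHitchinequation} directly. The flatness equation $[\MP_1^w,\MP_2^w]=0$ is immediate, since it is the $(1,1)$-part of $(\na^w)^2$, and $\na^w$ is flat because $(\ME,\vp,H_0)$ solves the Hitchin equations; alternatively one expands the bracket into a curvature term, a term $[\vp,\vp^{\da_H}]$, and two mixed terms equal to $\bpa_E\vp$ and its conjugate $\pa_A\vp^{\da_H}$, the latter two vanishing by holomorphicity while the former two assemble into the untwisted curvature equation. For the real moment map I would expand $[\MP_1^w,(\MP_1^w)^{\da_H}]$ and $[\MP_2^w,(\MP_2^w)^{\da_H}]$ separately; once more every cross term is a multiple of $\bpa_E\vp$ or $\pa_A\vp^{\da_H}$ and drops, and what survives, after forming the combination with relative weight $-|w|^2$ and applying $\Lambda$, is a nonvanishing scalar multiple (of the form $\pm(1+|w|^2)$) of $\Lambda(F_A+[\vp,\vp^{\da_H}])$. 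The untwisted equation $\Lambda(F_A+[\vp,\vp^{\da_H}])=0$ then yields \eqref{CstarHitchinequation}.

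For the converse I would observe that the stated formulas for $\MD_{\bz}$ and $\phi_z$ are precisely the inverse $T_w^{-1}$ of the forward assignment $(\bpa_E,\vp)\mapsto(\MP_1^w,\MP_2^w)$. Concretely, the weights $\tfrac{1}{1+|w|^2}$ and $\tfrac{|w|^2}{1+|w|^2}$ sum to one, so the first-order parts of $\MP_1^w$ and $(\MP_2^w)^{\da_H}$ combine to a genuine $(0,1)$-connection $\MD_{\bz}$ while their zeroth-order parts cancel the Higgs contribution; dually, the connection parts cancel in $\MP_2^w-(\MP_1^w)^{\da_H}$, leaving the tensorial $\phi_z$. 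Having identified the converse map as $T_w^{-1}$, I would run the forward computation in reverse, reading the same expansions as identities relating the untwisted brackets $[\MP_1,\MP_2]$ and $\Lambda([\MP_1,\MP_1^{\da_H}]+[\MP_2,\MP_2^{\da_H}])$ to the twisted ones, so that a solution of \eqref{CstarHitchinequation} is carried to a solution of the untwisted Hitchin equations.

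The main obstacle is the bookkeeping in the real moment-map step: one must verify that the two commutator brackets, with their $w$- and $\bar w$-dependent coefficients and the relative factor $-|w|^2$, collapse onto a single nonzero multiple of $F_A+[\vp,\vp^{\da_H}]$ with all holomorphicity terms canceling, and, on the converse side, that the weights $\tfrac{1}{1+|w|^2},\tfrac{|w|^2}{1+|w|^2}$ are exactly those that strip the Higgs contribution from $\MP_1^w$ and $(\MP_2^w)^{\da_H}$ to leave a bona fide $\bpa$-operator. Keeping the reality and adjoint conventions internally consistent, in particular the factor in $\vp=i\phi_z\,dz$ and the relation $\phi_{\bz}=\pm\phi_z^{\da_H}$, is the only delicate point; the remainder is routine.
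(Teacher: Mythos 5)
Your overall strategy is the same as the paper's: the paper's entire proof is the single sentence ``The proof follows by unwinding the definitions,'' and your plan is that unwinding, done explicitly. Your forward direction is essentially sound (flatness of $\na^w=\na_A+w^{-1}\vp+w\vp^{\da_H}$ is the standard $\CST$-family of flat connections, and every term produced by expanding the two brackets vanishes for a Hitchin solution). However, your central bookkeeping claim is wrong in a way that matters. You assert that the combination $[\MP_1^{w},(\MP_1^w)^{\da_H}]-|w|^2[\MP_2^{w},(\MP_2^{w})^{\da_H}]$ collapses, after $\Lambda$, onto a \emph{nonvanishing} multiple $\pm(1+|w|^2)$ of $\Lambda(F_H+[\vp,\vp^{\da_H}])$. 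Under the conventions the paper actually intends, the curvature part enters with coefficient $1-|w|^2$, and the surviving cross terms are the holomorphicity terms $\bar\pa_E\vp$, $\pa_H\vp^{\da_H}$ (which vanish for a Hitchin solution but do not cancel identically). You can see that your coefficient cannot be right without chasing any signs: the paper states that $w=-i$ recovers Corlette's equation \eqref{Eq_corlette}, which is a pure holomorphicity equation containing no curvature term at all; so at $|w|=1$ the curvature coefficient in the twisted moment map must degenerate to zero, which is incompatible with $\pm(1+|w|^2)$. For the forward direction this error is harmless (any multiple of zero is zero), but it invalidates the logic you propose for the converse.

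The genuine gap is in the converse. Your identification of the stated formulas as the inverse transformation is correct (the weights $\tfrac{1}{1+|w|^2}$, $\tfrac{|w|^2}{1+|w|^2}$ do reassemble a $\bar\pa$-operator, and $\MP_2^w-(\MP_1^w)^{\da_H}$ is tensorial), but ``running the forward computation in reverse'' does not prove the converse: in the forward direction $\bar\pa_E\vp=0$ is a \emph{hypothesis} used to discard the cross terms, while in the converse it is a \emph{conclusion} that must be extracted. Concretely, set $M=\Lambda(F_H+[\vp,\vp^{\da_H}])$, $X=\bar\pa_E\vp$, $Y=\pa_H\vp^{\da_H}$ for the decomposed data. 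The twisted flatness and moment-map equations give only two linear relations among $M$, $X$, $Y$, and these alone do not force $M=X=Y=0$. One must additionally use the reality constraints (that $Y$ is determined by $X^{\da_H}$, and the self-adjointness type of $M$), split the flatness relation into its Hermitian and skew-Hermitian parts --- which yields $X^{\da_H}$ proportional to $X$ and $M$ proportional to $X$ --- and then check that substituting into the moment map produces a coefficient that is nonzero for every $w\in\CST$ (it is, but this requires the computation; note that at $|w|=1$ the moment map by itself carries no curvature information, and it is exactly this type-decomposition argument, the same one underlying Corlette's theorem, that closes the proof there). Without this step your converse is unproved, and under your claimed $\pm(1+|w|^2)$ structure it would in fact fail pointwise at $|w|=1$.
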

The proof follows by unwinding the definitions. 

\medskip

We next recall Corlette's theorem \cite{Corlette88}:
\begin{theorem}{\cite{Corlette88}}
\label{Thm_CorletteExistence}
Let $\nabla=\MP_1+\MP_2$ be an irreducible flat connection, with $(0,1)$ and $(1,0)$ parts $\MP_1$ and $\MP_2$, respectively.
Then there exists a unique Hermitian metric $H$ such that $\bar{\pa}_H\vp_H =0$, where 
\begin{equation}
\bar{\pa}_H:=\frac{1}{2}(\MP_1+\MP_2^{\da_H}),\;\vp_H=\frac{1}{2}(\MP_2-\MP_1^{\da_H}).
\label{Eq_corlette}
\end{equation} 
\end{theorem}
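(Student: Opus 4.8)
The statement is Corlette's theorem on the existence and uniqueness of a harmonic metric for an irreducible flat connection, and the natural route is the equivariant harmonic map method of Eells--Sampson, Corlette and Donaldson. First I would translate the analytic problem into the language of harmonic maps. The flat connection $\nabla=\MP_1+\MP_2$ has holonomy representation $\rho:\pi_1(\Si)\to\SL(n,\CC)$, and a Hermitian metric $H$ on $E$ is the same datum as a $\rho$-equivariant map $u:\wt{\Si}\to N$ from the universal cover into the symmetric space $N:=\SL(n,\CC)/SU(n)$. The crucial geometric fact is that $N$ is complete and has nonpositive sectional curvature. Unwinding the definitions of $\bar\pa_H$ and $\vp_H$, the equation $\bar\pa_H\vp_H=0$ says precisely that the Hermitian part $\Psi_H$ of $\nabla$ is coclosed with respect to the $H$-unitary connection $A_H$; on a Riemann surface this is exactly the vanishing of the tension field $\tau(u)$, i.e.\ $u$ is harmonic. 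Thus the theorem is equivalent to the existence of a unique $\rho$-equivariant harmonic map.

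For existence I would run the harmonic map heat flow $\partial_t u=\tau(u)$ with an arbitrary smooth equivariant initial metric. Since $N$ is nonpositively curved, the Eells--Sampson Bochner formula shows that the energy density $e(u)=\tfrac12|du|^2$ satisfies a parabolic differential inequality with no positive zeroth order term, which yields long-time existence of the flow, monotone decay of the total energy $\mathscr{E}(u)=\int_\Si e(u)\,dV_\Si$ (the integrand descends to $\Si$ by equivariance), and uniform interior bounds on $e(u)$ in terms of the initial energy. The one feature that does not come for free is that $N$ is noncompact, so a priori the flow could drift off to infinity and fail to subconverge.

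The main obstacle is precisely this escape-to-infinity phenomenon, and it is here that irreducibility of $\nabla$ is essential. Following Corlette, I would argue by contradiction: if the flow (or a minimizing sequence for $\mathscr{E}$) failed to stay in a bounded region of $N$ modulo the action of $\rho(\pi_1(\Si))$, then passing to a limit of the direction of escape produces a nonconstant equivariant map into the ideal boundary of $N$, equivalently a $\rho$-invariant proper parabolic reduction of the structure group---that is, a $\rho$-invariant proper subbundle. This contradicts the irreducibility of $\nabla$. Hence the flow remains in a compact set modulo symmetry; parabolic regularity together with the energy estimates then gives smooth subconvergence to an equivariant harmonic map $u_\infty$, whose associated metric is the desired $H$. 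This reductivity $\Leftrightarrow$ properness dichotomy is the analytic heart of the theorem.

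For uniqueness I would use the convexity of energy along geodesic homotopies in a nonpositively curved target, in the form due to Hartman. Given two equivariant harmonic maps, connect them by the family $u_s$ of geodesics joining their values pointwise; nonpositive curvature makes $s\mapsto\mathscr{E}(u_s)$ convex, and harmonicity at the endpoints forces it to be affine, hence constant. The vanishing of the second variation then makes each term in the Bochner identity vanish, so $\partial_s u$ is a parallel equivariant vector field. For irreducible $\rho$ the only such field is trivial, so the two harmonic maps coincide and $H$ is unique, the residual center of $\SL(n,\CC)$ lying inside $SU(n)$ and acting trivially on $N$.
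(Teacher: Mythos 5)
Your proposal is correct, but note that the paper itself gives no proof of this statement: it is quoted verbatim as Corlette's theorem from \cite{Corlette88}, and the equivalence of the equation $\bar{\pa}_H\vp_H=0$ with harmonicity of the associated equivariant map is exactly how that reference (together with Donaldson's twisted harmonic maps paper) proceeds. Your outline --- translation to $\rho$-equivariant maps into $\SL(n,\CC)/SU(n)$, heat flow/energy minimization with nonpositive curvature, irreducibility ruling out escape to the ideal boundary via a parabolic reduction, and Hartman-type convexity for uniqueness --- is precisely the standard argument of the cited source, so there is nothing to flag beyond the fact that you reproved a result the paper only cites.
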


The fact that \eqref{CstarHitchinequation} is solvable generalizes this theorem; indeed, the equation in Corlette's theorem is the
$\CST$ Hitchin equation with coefficient $w=-i$.  

\begin{theorem}
\label{Thm_existenceofCstarHitchin}
Let $\nabla$ be an irreducible flat connection and fix any $w\in\CST$. Then there exists a solution to the $w$-twisted Hitchin equations. 
\end{theorem}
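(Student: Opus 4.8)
The plan is to reduce the statement to Corlette's theorem (Theorem \ref{Thm_CorletteExistence}) by a continuity argument in the single real parameter on which the equation genuinely depends. Fixing the flat connection $\nabla$ and writing $\MP_1=\nabla^{0,1}$, $\MP_2=\nabla^{1,0}$ for its type components, the first equation in \eqref{CstarHitchinequation} is just the flatness of $\nabla$ and holds automatically; the second is an equation for the Hermitian metric $H$ alone, and it depends on $w$ only through $c:=|w|^2$. Thus it suffices to solve
\[
\Lambda\bigl([\MP_1,\MP_1^{\da_H}]-c\,[\MP_2,\MP_2^{\da_H}]\bigr)=0
\]
for every $c\in(0,\infty)$. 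The remark following Theorem \ref{Thm_CorletteExistence} identifies the case $c=1$ (i.e.\ $|w|=1$, e.g.\ $w=-i$) with Corlette's equation, so for $c=1$ a solution exists by irreducibility of $\nabla$. I would therefore let $S\subset(0,\infty)$ be the set of $c$ for which a solution $H_c$ exists and show that $S$ is nonempty, open and closed.

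Openness I would obtain from the implicit function theorem. Linearizing the left-hand side at a solution $H_c$ along the geodesic $H_c e^{s}$, with $s$ a trace-free $H_c$-Hermitian endomorphism, produces a second-order elliptic operator of Laplace type coupled to $\nabla$ (plus a zeroth-order, $c$-weighted Higgs term). Its kernel consists of the infinitesimal automorphisms of the flat bundle that are simultaneously $H_c$-self-adjoint; since $\nabla$ is irreducible the only automorphisms are scalars, and tracelessness forces $s=0$. The operator is formally self-adjoint, being the Hessian of a K\"ahler moment map, so it is invertible, and solutions persist for nearby $c$.

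Closedness is the crux. For a sequence $c\to c_\ast$ with solutions $H_c$, I need a uniform bound preventing degeneration -- concretely a $C^0$ bound on $h_c:=H_1^{-1}H_c$, equivalently on the energy of the associated $\rho$-equivariant map $\wt{\Si}\to\SL(n,\CC)/SU(n)$. This is the same phenomenon Corlette controls: the $c$-weighting only reweights the Hermitian ($\MP_2$) versus holomorphic ($\MP_1$) contributions to the energy, and the obstruction to a uniform energy bound is the existence of a $\rho$-invariant subspace, i.e.\ reducibility. Since $\nabla$ is irreducible this cannot occur, uniformly in $c$ on compact subintervals, and the standard Bochner/maximum-principle estimate yields the bound; elliptic regularity then upgrades a weak limit to a smooth solution at $c_\ast$. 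The delicate point, and the main obstacle, is precisely making this energy estimate uniform as the weight $c$ varies -- once it holds, $S=(0,\infty)$ and existence follows for every $w$. Equivalently, one may run the $c$-weighted Corlette heat flow directly, whose long-time convergence rests on the same irreducibility mechanism.
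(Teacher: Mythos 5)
Your reduction to the single parameter $c=|w|^2$ is correct, and your openness argument is sound, but the proposal has a genuine gap at exactly the point you flag yourself: closedness. The assertion that irreducibility yields a $C^0$/energy bound ``uniformly in $c$ on compact subintervals'' is the entire analytic content of a continuity-method proof, and you give no argument for it. Corlette's properness/energy estimate is proved for a fixed flat connection with a fixed energy functional; redoing it for the $c$-weighted energy, uniformly as $c$ varies, is real work that you neither quote nor carry out. As written, you can start at $c=1$ and move a little, but you cannot conclude $S=(0,\infty)$.

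More importantly, the continuity method is unnecessary, and your own first observation nearly reveals why. Since flatness of $\nabla=\MP_1+\MP_2$ on a Riemann surface amounts to $[\MP_1,\MP_2]=0$ (the $(2,0)$ and $(0,2)$ parts vanish for degree reasons), the rescaled connection $\nabla_c:=\MP_1+\sqrt{c}\,\MP_2$ is again flat for every $c>0$, and it has exactly the same invariant subbundles as $\nabla$, hence is irreducible. Corlette's equation (your $c=1$ case) applied to $\nabla_c$ reads
\begin{equation*}
\Lambda\bigl([\MP_1,\MP_1^{\da_H}]-[\sqrt{c}\,\MP_2,(\sqrt{c}\,\MP_2)^{\da_H}]\bigr)
=\Lambda\bigl([\MP_1,\MP_1^{\da_H}]-c\,[\MP_2,\MP_2^{\da_H}]\bigr)=0,
\end{equation*}
which is precisely the equation you want to solve. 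So a single application of Theorem \ref{Thm_CorletteExistence} to a modified flat connection settles all $c$ at once, with no new estimates. This is what the paper does: it builds an auxiliary connection $\MD'$ out of $\MP_1,\MP_2$ and their $H$-adjoints with $w$-dependent coefficients, checks that $\MD's=0$ forces $\MP_1s=\MP_2s=0$ (so irreducibility passes to $\MD'$), applies Corlette's theorem to $\MD'$, and translates back to the $w$-twisted equations via Proposition \ref{solutioncstarhitchin}. The lesson is that the $w$-twisting is an algebraic change of frame on the space of flat connections, not a genuine deformation of the PDE requiring a path of solutions.
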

\begin{proof}
Write $\nabla=\MP_1+\MP_2$, and define, for any $H$, 
\begin{equation*}
\begin{split}
&\MD_{\bz}:=\frac{1}{1+|w|^2}\MP_1+\frac{|w|^2}{1+|w|^2}(\MP_2)^{\da_H},\;\phi_z:=\frac{w}{1+|w|^2}(\MP_2-(\MP_1)^{\da_H}),\\
& \MD_z:=\frac{1}{1+|w|^2}(\MP_1^{\da_H}+|w|^2\MP_2),\;\phi_{\bz}:=\frac{\bar{w}}{1+|w|^2}(\MP_2^{\da_H}-\MP_1)
\end{split}
\end{equation*}
and 
\begin{equation}
  \begin{split}
    &\MD':=\MD_z+\MD_{\bz}+i\phi_z+i\phi_{\bz} = \\
    & \frac{1}{(1+|w|^2)}((1-i\bar{w})\MP_1+(iw+|w|^2)\MP_2+(1-iw)\MP_1^{\da_H}+(|w|^2+i\bar{w})\MP_2^{\da_H}). 
\end{split}
\end{equation}

If $s$ is a section such that $\MD's=0$, then 
\begin{equation}
(1-i\bar{w})\MP_1s+(|w|^2+i\bar{w})\MP_2^{\da_H}s=0,\;(1-iw)\MP_1^{\da_H}s+(|w|^2+iw)\MP_2s=0.
\end{equation}
When $w\in\CST$, this implies $\MP_1s=\MP_2s=0$. As $\nabla$ is irreducible, we obtain that $\Ker\;\MD'=0$, hence $\MD'$ is irreducible
too. Applying Theorem \ref{Thm_CorletteExistence} to the operator $\MD'$, there exists a pair $(A_{H_0},\phi_{H_0})$ with Hermitian metric $H_0$ solves the Hitchin equation by Proposition \ref{solutioncstarhitchin}, the corresponding $(\MP_1^w,\MP_2^w,H_0)$ solves the w-twisted Hitchin equations.
\end{proof}	

\begin{proposition}{\cite{donaldson1985anti, Donaldson1987Infinite, Simpson1988Construction}}
	\label{Prop_uniqueness}
Let $\nabla=\MP_1^w+\MP_2^w$ be irreducible flat connection, then the solution to the w-twisted Hitchin equations is unique.
\end{proposition}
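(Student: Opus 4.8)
The plan is to mirror the existence argument of Theorem \ref{Thm_existenceofCstarHitchin} and reduce uniqueness to the uniqueness half of Corlette's Theorem \ref{Thm_CorletteExistence}, supplemented by the classical convexity argument of Donaldson and Simpson \cite{donaldson1985anti, Donaldson1987Infinite, Simpson1988Construction}. Fix the irreducible flat connection $\nabla = \MP_1^w + \MP_2^w$ and suppose $H_1, H_2$ both solve \eqref{CstarHitchinequation}. First I would apply Proposition \ref{solutioncstarhitchin} to each $H_i$, producing solutions of the untwisted Hitchin equations with the same metric $H_i$; since this correspondence is metric-preserving and invertible, uniqueness for the $w$-twisted equation becomes equivalent to uniqueness for the associated untwisted harmonic-metric problem attached to $\nabla$ and $w$. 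By Theorem \ref{Thm_CorletteExistence} an irreducible flat connection admits a unique harmonic metric, and this transfers back to give $H_1 = H_2$.

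To make the argument self-contained I would instead run the convexity argument directly on \eqref{CstarHitchinequation}, using that its second equation is the vanishing of the moment map for the unitary gauge group $\MG$. Write $H_2 = H_1 h$ with $h$ self-adjoint and positive with respect to $H_1$, set $h = e^{s}$, and consider the path $t \mapsto H_t := H_1 e^{ts}$, $t \in [0,1]$, of Hermitian metrics joining $H_1$ to $H_2$. The key step is to show that the Donaldson-type energy functional $M(t)$ whose Euler--Lagrange equation is \eqref{CstarHitchinequation} is convex along this geodesic, so that $M''(t) \geq 0$; since both endpoints solve the equation, $M'(0) = M'(1) = 0$, which forces $M''(t) \equiv 0$. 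Unwinding this identity yields that $h$ is covariantly constant for $\nabla$, i.e.\ that $h$ is $\nabla$-parallel.

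Finally I would invoke irreducibility. A $\nabla$-parallel, self-adjoint, positive endomorphism $h$ splits $E$ into an orthogonal direct sum of $\nabla$-invariant eigenbundles; irreducibility of $\nabla$ forces $h$ to be a constant multiple of the identity, and the normalization $\det(E)=0$ (so that $\det H_1 = \det H_2$) then gives $h = \mathrm{Id}$, hence $H_1 = H_2$.

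The step I expect to be the main obstacle is establishing the convexity $M''(t) \geq 0$ in the presence of the indefinite sign in \eqref{CstarHitchinequation}: the term $-|w|^2[\MP_2^w,(\MP_2^w)^{\da_H}]$ enters with the opposite sign to $[\MP_1^w,(\MP_1^w)^{\da_H}]$, so the second variation is not manifestly a sum of squares and the standard Donaldson functional is not obviously convex. The way I would resolve this is exactly the reduction of the first paragraph: the twistor rotation underlying Proposition \ref{solutioncstarhitchin} rewrites \eqref{CstarHitchinequation} as a genuine positive-definite harmonic-metric equation, equivalently as the harmonicity equation for a map into the nonpositively curved symmetric space $\SL(n,\CC)/SU(n)$, for which convexity of the energy is classical; one then only needs to check that this rewriting is metric-preserving and bijective, so that the vanishing of $M''$ transfers back to the twisted problem.
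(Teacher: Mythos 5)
The fatal problem is in your fallback reduction (your first and last paragraphs), which is where you place the entire burden of the proof. Proposition \ref{solutioncstarhitchin} does convert each solution $(\nabla,H_i)$ of \eqref{CstarHitchinequation} into a solution of the untwisted Hitchin equations, but the Higgs data it produces, namely $\MD_{\bz}^{(i)}=\tfrac{1}{1+|w|^2}\MP_1^w+\tfrac{|w|^2}{1+|w|^2}(\MP_2^w)^{\da_{H_i}}$ and $\phi_z^{(i)}=\tfrac{w}{1+|w|^2}\bigl(\MP_2^w-(\MP_1^w)^{\da_{H_i}}\bigr)$, is built from the adjoints with respect to $H_i$ and therefore depends on the unknown metric. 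Thus $H_1$ and $H_2$ are harmonic metrics for two a priori \emph{different} Higgs bundles (equivalently, two different flat connections), and the uniqueness clause of Theorem \ref{Thm_CorletteExistence}, which holds for a \emph{fixed} flat connection, compares nothing. There is no fixed ``harmonic-metric problem attached to $\nabla$ and $w$'': for $|w|\neq 1$ the $w$-twisted equation is not the harmonicity equation for $\nabla$ itself (that is precisely the modulus-one case; the paper notes Corlette is $w=-i$), and the ``twistor rotation'' you invoke is only defined in terms of the solution one is trying to pin down. Making this reduction non-circular would amount to proving injectivity of the twistor map $\NHC^w$ at the level of metrics, which is essentially the proposition being proved.

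Meanwhile, the obstacle you flag in the direct convexity argument is not actually there — and removing it is exactly the content of the paper's proof. The relative minus sign in $\Xi_H=\Lambda\bigl([\MP_1^{w},(\MP_1^w)^{\da_H}]-|w|^2[\MP_2^{w},(\MP_2^{w})^{\da_H}]\bigr)$ is what \emph{makes} the functional convex, not what threatens it: by the K\"ahler identities, $i[\Lambda,\MP_1^w]$ and $i[\Lambda,\MP_2^w]$ carry opposite signs (the operators are of type $(0,1)$ and $(1,0)$ respectively), so when one differentiates $\Xi_{H_t}$ along $H_t=Ke^{ts}$, pairs with $s$, and integrates over the compact surface $\Si$ (no boundary terms arise), both terms contribute with the \emph{same} sign; with $w=\tan\beta$ one gets exactly $\frac{d^2}{dt^2}\MM(H_t,K)=\int_{\Si}\cos^2\beta|\MP_1^ws|^2+\sin^2\beta|\MP_2^ws|^2\geq 0$. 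From this point your second and third paragraphs coincide with the paper's argument: criticality at both endpoints forces $\MP_1^ws=\MP_2^ws=0$, i.e.\ $s$ is $\nabla$-parallel, and irreducibility together with tracelessness gives $s=0$, hence $H_1=H_2$. So the correct repair is to carry out the second-variation computation you deferred, rather than to route the argument through Proposition \ref{solutioncstarhitchin} and Corlette, which cannot close the gap.
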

\begin{proof}
  For any Hermitian metric $H$, define $\Xi_H:=\Lambda([\MP_1^{w},(\MP_1^w)^{\da_H}]-w^2[\MP_2^{w},(\MP_2^{w})^{\da_H}])$.
  If $K$ is another Hermitian metric, write $H=Ke^s$ and consider $\Xi$ as a function of $s$.  Define the functional
\begin{equation}
\MM(H,K)=\int_0^1\int_{\Sigma}\langle s,\Xi_{us}\rangle _K\ \omega\we du, 
\end{equation}
where $\omega$ is the area form on $\Si$.  This functional reveals the variational structure for the $\CST$-Hitchin equations. Indeed, 
writing $H_t=Ke^{ts}$, then
\begin{equation}
\begin{split}
\frac{d}{dt}\MM(H_t,K)=\int_{\Sigma}\Tr(\Xi_{H_t}s)\omega,\;\frac{d^2}{dt^2}\MM(H_t,K)=\int_{\Si}\cos^2\beta|\MP_1^ws|^2+\sin^2\beta|\MP_2^ws|^2.
\end{split}
\end{equation}
When $\nabla$ is irreducible, $\frac{d^2}{dt^2}\MM(H_t,K)>0$, and this strict convexity implies uniqueness.
\end{proof}

\begin{subsection}{The boundary condition at infinity}
\label{Sec_boundaryconditioninfinity}
In the rest of the paper, we assume that $w=\tan\beta$ and write $I_{\beta}:=I_{\tan\beta}=
\cos2\beta I+\sin 2\beta K,$ and $\MM^{\beta}_{\Opers}:=\MM^{\tan\beta}_{\Opers}$ for
the corresponding oper moduli space.

The asymptotic boundary condition for \TEBE as $y\to\infty$ corresponds to the requirement that solutions converge to a $y$-independent
flat twisted connection, and that $\phi_1 \to 0$. 
More explicitly,  the boundary conditions for the triple $(A,\phi,\phi_1)$ are that $(A,\phi)$ converges
to a flat $w$-twisted $\SL(n,\mathbb{C})$ connection $\nabla^w$ associated to  the pair $(A_{\flat},\phi_{\flat})$, 
and that $\phi_1$ converges (exponentially) to $0$.

We can phrase this equivalently in terms of the Hermitian metric: 
\begin{definition}
Suppose that $A_{\flat}+i\phi_{\flat}$ is an irreducible flat connection, and define the (necessarily commuting) operators
$$
\MD_1:=(D_{(A_{\flat})_{\bz}}-(\phi_{\flat})_{\bz}\tan\beta)d\bz, \ \ \MD_2:=(D_{(A_{\flat})_z}+(\phi_{\flat})_z\cot \beta)dz,
$$
By Proposition \ref{solutioncstarhitchin}, we obtain a solution $H_\flat$; this is the boundary condition for the Hermitian metric at infinity.
\end{definition}

\end{subsection}
\end{section}
	
\begin{section}{The singular boundary Condition and the Model Metric}
\label{Sec_KobayashiHitchinMap}
\begin{subsection}{The tilted Nahm pole boundary conditions}
Fix a principal embedding $\slf_2 \hookrightarrow \slf_{n}$ and choose a basis $\slf_2=\mathrm{span}\, \{\mfe^+,\mfe^-,\mfe^0\}$ where
$$
[\mfe^+,\;\mfe^-]=\mfe^0,\;[\mfe^0,\;\mfe^-]=-2\mfe^-,\;[\mfe^0,\;\mfe^+]=2\mfe^+.
$$ 
Set $B_i=i(n-i)$ and write
\begin{equation}
\begin{split}
\mfe^+=\begin{pmatrix}
0 & \sqrt{B_1} & 0  &\cdots& 0\\
0 & 0 & \sqrt{B_2}  &\cdots& 0\\
\vdots & \vdots &\ddots &  &\vdots\\
\vdots & \vdots & &\ddots  &\sqrt{B_{n-1}}\\
0& 0 & \cdots & 0 &0
\end{pmatrix},\;\mfe^0=\begin{pmatrix}
n-1 & 0 & 0  &\cdots& 0\\
0 & n-3 &  0 &\cdots& 0\\
\vdots & \vdots &\ddots &  &\vdots\\
\vdots & \vdots & &-(n-3)  &0\\
0& 0 & \cdots  &0 &-(n-1)
\end{pmatrix}.
\end{split}
\end{equation}
	
Using local coordinate $(z,y)$ on $\Si\ti\RP$, we have $A=A_zdz+A_{\bz}d\bz,\;\phi=\phi_zdz+\phi_{\bz}d\bz$, and the
model tilted Nahm pole solution over $T^2\ti\RP$ is
\begin{equation}
\begin{split}
&A_z=y^{-1}\mfe^+\sin\beta,\;A_{\bz}=y^{-1}\mfe^{-}\sin\beta,\\
&\phi_z=y^{-1}\mfe^+\cos\beta,\;\phi_{\bz}=y^{-1}\mfe^{-}\cos\beta,\;\phi_1=\frac{i}{2y}\mfe^0\cos\beta.
\end{split}
\end{equation}
\begin{definition}[\cite{witten2011fivebranes}]
\label{Nahmpoleunitarydefinition}
The fields $(A,\phi,\phi_1)$ satisfy the tilted Nahm pole boundary conditions if in some local trivialization,
\begin{equation*}
\begin{split}
&A_z\sim y^{-1}\mfe^+\sin\beta+\MO(y^{-1+\ep}),\;A_{\bz}\sim y^{-1}\mfe^{-}\sin\beta+\MO(y^{-1+\ep}),\\
&\phi_z\sim y^{-1}\mfe^+\cos\beta+\MO(y^{-1+\ep}),\;\phi_{\bz}\sim y^{-1}\mfe^{-}\cos\beta+\MO(y^{-1+\ep}),\;\phi_1\sim \frac{i}{2y}\mfe^0\cos\beta+\MO(y^{-1+\ep}),
\end{split}
\end{equation*}
as $y\to 0.$
	\end{definition}
	
We now examine these boundary conditions in a parallel holomorphic gauge. In a local holomorphic coordinate $z$, we have
$\MD_1=D_{\bz}$ and $\MD_3=\pa_y$, and we also write $\MD_2=\pa+\al$. Suppose now that in some local trivialization,
\begin{equation}
\label{alformNahmpole}
\begin{split}
\al =\begin{pmatrix}
\st & \sqrt{B_1} & 0 &\cdots& 0\\
\st & \st & \sqrt{B_2} &\cdots& 0\\
\vdots & \vdots &\ddots & &\vdots\\
\vdots & \vdots & &\ddots &\sqrt{B_{n-1}}\\
\st& \st & \cdots  &\st &\st
\end{pmatrix}\, dz,
\end{split}
\end{equation}
where all entries two or more `levels' above the main diagonal are zero. Imposing the singular Hermitian metric
$H_0=\exp(-\log (y\cos\beta)\mfe^0)$ leads to fields satisfying the tilted Nahm pole boundary condition. This
follows directly from the expressions \eqref{unitarygaugeformula} in unitary gauge.
If $H=H_0e^s$ is another Hermitian metric, where $s$ is a section of $i\su(E.H_0)$ with $\sup|s|+y|ds|\leq Cy^{\ep}$, 
then the corresponding fields $(A_H,\phi_H,(\phi_1)_H)$ also satisfy the tilted Nahm pole boundary conditions.
\begin{definition}
If in some local trivialization, $\MD_2=\pa+\al$ with $\al$ as in \eqref{alformNahmpole}, and we set 
$H_0=\exp(-\log (y\sin \beta)\, \mfe^0)$, then any $H = H_0 e^s$ with $|s| + |y\, ds| < Cy^\ep$ satisfies the tilted Nahm pole boundary condition.
\end{definition}
\end{subsection}

\begin{subsection}{Holomorphic data from the singular boundary condition}
By \eqref{complexgaugeactionchange}, up to complex gauge transform, a choice of holomorphic data corresponds to a flat connection over $\Si$.
We now discuss how these singular boundary conditions interact with this holomorphic data. 
	
As above, in some trivialization near $y=0$, these fields satisfy
\begin{equation}
\begin{split}
\MD_1=&\pa_{\bz}+A_{\bz}-\phi_{\bz}\tan\beta +\MO(y^{-1+\ep})=\pa_{\bz}+\MO(y^{-1+\ep}), \\
\MD_2=&\pa_z+\frac{\mfe^+}{y\sin \beta}+\MO(y^{-1+\ep}) \\
\MD_3=&\pa_y+\frac{\mfe^0}{2y}+\MO(y^{-1+\ep})
\end{split}
\label{NPdatatrivilasection}
\end{equation}	
Choose a holomorphic basis $s_1,\cdots,s_{n+1}$ so $s_i$ corresponds to $(0,\cdots,0,1,0,\cdots,0)^{\da}$.
If $s(y)$ is any section of $E$, we can solve the ODE $\MD_3s=0$ with initial value $s(y)|_{y=1}=\sum_{i=1}^{n}a_is_i$, $a_i\in\mathbb{C}$.
Then 
$$
s(y)=\sum_{i=1}^{n}a_iy^{-\frac{n-1}{2}+i-1} s_i +\MO(y^{-\frac{n-1}{2}+i-1+\ep})   \ \mbox{as}\ y\searrow  0,
$$
for some small $\ep > 0$. This determines a filtration $E_{\bullet}: 0=E_0\subset E_1\subset \cdots \subset E_{n}=E$ by vanishing rates, where 
\begin{equation}
\begin{split}
E_i:=\{s\in\Gamma(E)|\ \MD_3s=0,\;\lim_{y\to0}|sy^{-\frac{n-1}{2}+i-1+\ep}|=0\}
\end{split}
\end{equation}
for any $0<\ep<1$.  (Note that, by construction, $\rank\;E_i=i$.) 

To see that the boundary condition induces an oper structure, we must check the three points in Definition \ref{Def_Opers}.
The first two are straightforward. The third follows from the 
\begin{proposition}
For any section $s$ of $E_i$, both $\MD_1s$ and $\MD_2 s$ lie in $E_{i+1}\otimes \bar{K}$. 
\end{proposition}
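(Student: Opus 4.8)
\medskip

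The plan is to combine the commutation relations with the explicit model \eqref{NPdatatrivilasection} of the three operators near $y=0$. The starting point is that membership in the filtration is detected purely by $\MD_3$, while $\MD_1$ and $\MD_2$ commute with $\MD_3$. Thus, if $s$ is a section of $E_i$, so that $\MD_3 s=0$, then $[\MD_1,\MD_3]=[\MD_2,\MD_3]=0$ gives $\MD_3(\MD_1 s)=\MD_1(\MD_3 s)=0$ and $\MD_3(\MD_2 s)=\MD_2(\MD_3 s)=0$. Hence $\MD_1 s$ and $\MD_2 s$ are again $\MD_3$-parallel, as sections of $E\otimes\bar{K}$ and $E\otimes K$ respectively, so the only remaining issue is to bound their vanishing rates as $y\searrow 0$.

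Next I would use the fact that, since $\MD_3=\pa_y+\tfrac{1}{2y}\mfe^0+\MO(y^{-1+\ep})$ with $\mfe^0=\diag(n-1,n-3,\dots,-(n-1))$ diagonal in the chosen basis, every $\MD_3$-parallel section admits an asymptotic expansion $\sum_j c_j\, y^{-\frac{n-1}{2}+j-1}s_j+\cdots$ in the model solutions, whose leading exponents are separated by integers. A $\MD_3$-parallel section therefore lies in $E_k$ exactly when no component $s_j$ with $j\le n-k$ appears, i.e.\ when its most singular direction is among $s_{n+1-k},\dots,s_n$. Consequently the proposition reduces to identifying the most singular model direction that can occur in $\MD_1 s$ and in $\MD_2 s$.

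For $\MD_1=\pa_{\bz}+\MO(y^{-1+\ep})$, the term $\pa_{\bz}$ acts componentwise and leaves the $y$-rate of $s$ unchanged, while the perturbation $\MO(y^{-1+\ep})$ applied to the worst component of $s$ (rate $y^{\frac{n-1}{2}-i+1}$) produces only $\MO(y^{\frac{n-1}{2}-i+\ep})$, which is strictly less singular than the model rate $y^{\frac{n-1}{2}-i}$ of the direction $s_{n-i}$. Extracting the coefficient of $s_{n-i}$ as a limit $\lim_{y\to 0}y^{-\frac{n-1}{2}+i}\langle \MD_1 s,s_{n-i}\rangle$ then shows it vanishes, so no component $s_j$ with $j\le n-i$ appears, and in fact $\MD_1 s\in E_i\otimes\bar{K}\subset E_{i+1}\otimes\bar{K}$. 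For $\MD_2=\pa_z+\tfrac{1}{y\sin\beta}\mfe^+ +\MO(y^{-1+\ep})$, the decisive feature is that $\mfe^+$ sends $s_{j+1}\mapsto\sqrt{B_j}\,s_j$, so $\tfrac1y\mfe^+$ maps the model solution of component $j+1$, of rate $y^{-\frac{n-1}{2}+j}$, exactly onto the model solution of component $j$, of rate $y^{-\frac{n-1}{2}+j-1}$. Applied to the worst component $s_{n+1-i}$ of $s$, this yields a term in the $s_{n-i}$ direction, one genuine step down the filtration, while both $\pa_z s$ and the $\MO(y^{-1+\ep})$ perturbation are strictly less singular; hence $\MD_2 s\in E_{i+1}\otimes K$, and the nonvanishing leading coefficient $\sqrt{B_{n-i}}/\sin\beta$ simultaneously exhibits the isomorphism $E_i/E_{i-1}\to(E_{i+1}/E_i)\otimes K$ required by the third axiom in Definition \ref{Def_Opers}.

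The main obstacle is justifying the asymptotic expansion and coefficient-extraction used throughout: one must show that a $\MD_3$-parallel section of $E$ (and of $E\otimes K$, $E\otimes\bar{K}$) governed by the model \eqref{NPdatatrivilasection} genuinely admits a polyhomogeneous expansion whose leading terms are the pure model solutions $y^{-\frac{n-1}{2}+j-1}s_j$, so that the subleading perturbations $\MO(y^{-1+\ep})$ — being strictly less singular than $y^{-1}$ — cannot create a component more singular than is permitted. This is an ODE-regularity statement for the operator $\pa_y+\tfrac{1}{2y}\mfe^0+\MO(y^{-1+\ep})$ with indicial roots $-\tfrac{n-1}{2}+j-1$, and once it is in hand the rate bookkeeping above closes the argument; the tangential contributions $\pa_z s$ and $\pa_{\bz}s$ are controlled by the accompanying matched derivative estimates, since they preserve the $y$-rate.
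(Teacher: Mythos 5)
Your proposal is correct and follows essentially the same route as the paper: both use the commutation relations $[\MD_j,\MD_3]=0$ to conclude $\MD_3(\MD_j s)=0$, and then analyze the resulting singular ODE in $y$ to read off the vanishing rate placing $\MD_j s$ in $E_{i+1}$ tensored with the appropriate line bundle. Your version merely makes explicit the rate bookkeeping (and the role of $\mfe^+/y$ in dropping exactly one filtration step, which the paper uses in the paragraph immediately after its proof to get the isomorphism $E_i/E_{i-1}\to E_{i+1}/E_i\otimes K$) that the paper compresses into the single phrase ``integrating this singular ODE in the $y$ direction.''
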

\begin{proof}
If $s$ is a smooth section of $E_i$ with $\MD_3 s = 0$, then $0 = [\MD_j,\MD_3]s=\MD_3\MD_js$, $j=1,2$.
But then, integrating this singular ODE in the $y$ direction shows that 
$\lim_{y\to0}|(\MD_js)y^{-\frac{n-1}{2}+i+\ep}|=0$, $j = 1, 2$. 
\end{proof}
To conclude the verification that this data gives an oper, observe finally that since the entries of $\MD_j$, $j = 1,2$, above
the main diagonal are nonzero constants, each
$$
\MD_2:E_{i}/E_{i-1}\to E_{i+1}/E_{i}\otimes K
$$
is an isomorphism.
	
In summary, we have shown that the tilted Nahm pole boundary conditions determine an oper structure on solutions.
\end{subsection}

\begin{subsection}{The moduli space and the Gaiotto-Witten map}
We can now define the moduli space of solutions to the twisted Bogomolny equations with tilted Nahm pole boundary conditions
\begin{multline*}
  \MM_{\mathrm{TBE}}^{\beta}:=\{(A,\phi,\phi_1)| \ \mathrm{TBE}(A,\phi,\phi_1)=0, \ (A,\phi,\phi_1) \ \mbox{satisfies the tilted Nahm Pole } \\
\quad \mbox{boundary condition at} \ y=0,\ \mbox{and} \ (A,\phi,\phi_1) \mbox{ converges} \\
\mbox{to a flat $\SL(n,\CC)$ connection as} \ y\to+\infty\}/\mathcal{G}_0;
\end{multline*}
here $\mathcal{G}_0$ is the (real) gauge group preserving the Nahm pole boundary condition and decay assumptions.

\begin{proposition}{\cite{gaiotto2012knot}} \label{INP}
  There is a well-defined Kobayashi-Hitchin map
  \[
    I_{\Opers}^{\beta}:\MM_{\mathrm{TBE}}^{\beta}\to \MM_{\mathrm{Oper}}^{\beta}.
    \]
\end{proposition}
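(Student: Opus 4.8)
The plan is to read the oper off directly from the complexified operators attached to a solution, and then to check that the resulting class is independent of the residual gauge freedom. Given $(A,\phi,\phi_1)$ solving \eqref{KW2} with tilted Nahm pole boundary conditions, I would first form $\MD_1,\MD_2,\MD_3$. With $w=\tan\beta$ the pair $(\MD_1,\MD_2)$ agrees, slice by slice, with the $w$-twisted pair $(\MP_1^w,\MP_2^w)$ of \eqref{CstarHitchinequation}, and the commutation relations in \eqref{TEBE} give $[\MD_i,\MD_j]=0$. By the parallel-transport discussion following \eqref{generalcomplexstructurequotientGC}, the flat connection $\MD_{\Si_y}=\MD_1+\MD_2$ on each slice is identified across slices by $\MD_3$, and so defines a single $w$-twisted flat $\SL(n,\CC)$ connection $\nabla$ over $\Si$. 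Together with the filtration $E_\bullet$ constructed in the previous subsection from the $\MD_3$-flat sections and their vanishing rates, the pair $(\nabla,E_\bullet)$ is a $\tan\beta$-twisted oper whose connected component is fixed by the Nahm pole vanishing rates.

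To land in $\MM^\beta_{\Opers}$ I would supply the Hermitian datum. The unitary structure of the solution equips $E$ with a metric, and the boundary condition at infinity forces it to converge as $y\to\infty$ to the metric $H_\flat$ which, by the Definition in Section \ref{Sec_boundaryconditioninfinity}, solves the $w$-twisted Hitchin equations \eqref{CstarHitchinequation}. I would therefore set $I_{\Opers}^\beta(A,\phi,\phi_1):=[(\nabla,H_\flat)]$. By Theorem \ref{Thm_existenceofCstarHitchin} and Proposition \ref{Prop_uniqueness}, $H_\flat$ is the unique twisted metric attached to $\nabla$, so the target is genuinely $\MM^\beta_{\Opers}$ and the assignment is canonical.

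The remaining issue is descent to the quotient by $\mathcal{G}_0$. Since $\mathcal{G}_0\subset\MG\subset\MGC$, a gauge transformation $g$ acts on the holomorphic data by $\MD_i\mapsto g^{-1}\MD_i g$, which is precisely the $\MGC$-action defining $\MM_{\Opers}$; hence the underlying oper is unchanged. As $g$ is moreover unitary it preserves the metric equation and acts by the $\MG$-action defining $\MM^\beta_{\Opers}$, so the class $[(\nabla,H_\flat)]$ does not depend on the representative.

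I expect the main obstacle to be matching the two ends of the cylinder: the filtration $E_\bullet$ is extracted from the $y\to0$ asymptotics, whereas $\nabla$ and $H_\flat$ are read off at $y\to\infty$, so one must verify that $E_\bullet$ is holomorphic for the holomorphic structure of $\nabla$ and is transported consistently in $y$. This is exactly where the relations $[\MD_1,\MD_3]=[\MD_2,\MD_3]=0$ and the singular-ODE analysis of the previous subsection are used: they guarantee that the $\MD_3$-parallel transport preserves the vanishing-rate spaces and intertwines the flat connections on the slices, so that the oper structure defined near $y=0$ and the flat connection surviving to $y=\infty$ assemble into one point of $\MM^\beta_{\Opers}$.
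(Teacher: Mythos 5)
Your proposal is correct and takes essentially the same route as the paper: the paper's proof of this proposition is exactly the construction of the preceding subsection---the flat connection obtained by $\MD_3$-parallel transport of $\MD_{\Si_y}=\MD_1+\MD_2$, together with the filtration $E_\bullet$ by vanishing rates of $\MD_3$-parallel sections, verified against Definition \ref{Def_Opers}---and this is precisely what you invoke. The two points you make explicit (supplying the Hermitian datum $H_\flat$, which is unique by Proposition \ref{Prop_uniqueness}, and descent under $\mathcal{G}_0\subset\MG\subset\MGC$) are left implicit in the paper but are routine and correctly handled.
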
	
\end{subsection}
\end{section}
	
\begin{section}{Linear Analysis}
We now study Fredholm properties of the linearized moment map. 

Fix a background Hermitian metric $H_0$ on the bundle $E$ and the three operators $\MD_i$ satisfying $[\MD_i,\MD_j]=0$.
We seek a new Hermitian metric $H=H_0e^s$ so that the final moment map equation
\begin{equation}
\Omega_H:=\frac{i}{2}\Lambda(\cos^2\beta[\MD_1, \MD_1^{\da}]-\sin^2\beta[\MD_2,\MD_2^{\da}])+[\MD_3,\MD_3^{\da}] = 0
\label{generalizaedcurvature}
\end{equation}
holds; here $\Lambda$ is contraction by the K\"ahler form. 

\begin{definition} A Hermitian metric $H_0$ is called admissible if:
\begin{itemize}
\item the Chern connection associated to $H_0$ satisfies the Nahm pole boundary conditions;
\item the Chern connection converges to an oper as $y \to \infty$, cf.\ Section \ref{Sec_boundaryconditioninfinity};
\item $\Omega_{H_0}$ vanishes to all orders at $y=0$.
\end{itemize}
\end{definition}

The following adapts a result in \cite{HeMazzeo2018} to the present setting:
\begin{proposition}  Defining $H = H_0 e^s$, then
\begin{equation}
\Omega_H=\Omega_{H_0}+\gamma(-s)\ML_{H_0}s+Q(s),
\label{quasilinearformequation}
\end{equation}
where 
\[
 \ML_{H_0}s:=\frac{i}{2}\Lambda(\cos^2\beta\MD_1\MD_1^{\da_{H_0}}-\sin^2\beta\MD_2\MD_2^{\da_{H_0}}) s
 +\MD_3\MD_3^{\da_{H_0}}s,
\]
and 
$$
Q(s):=\frac{i}{2}\Lambda(\cos^2\beta\MD_1(\gamma(-s))\MD_1^{\da_{H_0}}s-\sin^2\beta\MD_2(\gamma(-s))
\MD_2^{\da_{H_0}}s)+\MD_3\gamma(-s)\MD_3^{\da_{H_0}}s,
$$
where $\gamma(s):=\frac{e^{\mathrm{ad}_s}-1}{\mathrm{ad}_s}$. Furthermore, 
\begin{equation}
\begin{split}
\label{Keyequation2}
\langle \Omega_{H}-\Omega_{H_0},s\rangle _{H_0}=\Delta|s|_{H_0}^2+|v(s)\na s|^2
\end{split}
\end{equation}
where $v(s)=\sqrt{\gamma(-s)}=\sqrt{\frac{1-e^{-ad_s}}{ad_s}}$, $\Delta=\Delta_{\Sigma}-\pa_y^2$
and 
\begin{equation}
\begin{split}
  |v(s)\na s|^2:=&\frac{1}{4}\cos^2\beta(|v(s)\MD_1s|^2+|v(s)\MD_1^{\da_{H_0}}|^2)+\frac{1}{4}\sin^2\beta(|v(s)\MD_2s|^2
  +|v(s)\MD_2^{\da_{H_0}}|^2)\\&+\frac{1}{2}(|v(s)\MD_3s|^2+|v(s)\MD_3^{\da_{H_0}}|^2).
\end{split}
\label{vsdefinition}
\end{equation}
\end{proposition}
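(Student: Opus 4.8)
The plan is to reduce everything to a single algebraic identity governing how the metric adjoints $\MD_i^{\da_H}$ transform under the change $H=H_0e^s$, and then to feed this into a Weitzenb\"ock-type computation. The first step is to establish the conjugation formula
\[
\MD_i^{\da_H}=e^{-s}\circ\MD_i^{\da_{H_0}}\circ e^{s}.
\]
This follows directly from the defining relations $\bar{\pa}(H(u,v))=H(\MD_1u,v)+H(u,\MD_1^{\da_H}v)$ (and their $\pa$ and $\pa_y$ analogues): substituting $H(u,v)=H_0(e^su,v)$ and using that $e^s$ is $H_0$-self-adjoint, since $s\in i\su(E,H_0)$, isolates the difference $\MD_i^{\da_H}-\MD_i^{\da_{H_0}}$ as multiplication by the endomorphism $e^{-s}\MD_i^{\da_{H_0}}(e^s)$. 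The standard derivative-of-exponential identity $e^{-s}\MD(e^s)=\gamma(-s)\MD s$, valid for any derivation $\MD$ with $\gamma(-s)=\frac{1-e^{-\ad_s}}{\ad_s}$, then identifies this endomorphism as exactly $\gamma(-s)\,\MD_i^{\da_{H_0}}s$.

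With this in hand the first formula is immediate. The difference $[\MD_i,\MD_i^{\da_H}]-[\MD_i,\MD_i^{\da_{H_0}}]=[\MD_i,\gamma(-s)\MD_i^{\da_{H_0}}s]$ is the zeroth-order operator given by the covariant derivative $\MD_i(\gamma(-s)\MD_i^{\da_{H_0}}s)$, and the Leibniz rule splits this as $\gamma(-s)\,\MD_i\MD_i^{\da_{H_0}}s+\MD_i(\gamma(-s))\,\MD_i^{\da_{H_0}}s$. Because $\gamma(-s)$ acts only on the Lie-algebra factor it commutes with $\Lambda$ and with the scalar coefficients $\cos^2\beta,\sin^2\beta$; pulling it out of the sum collects the first pieces into $\gamma(-s)\ML_{H_0}s$, while the remaining pieces are precisely $Q(s)$. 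This yields \eqref{quasilinearformequation}.

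For the Bochner identity \eqref{Keyequation2} I would pair $\Omega_H-\Omega_{H_0}$ with $s$ in the $H_0$-inner product and integrate each term by parts in the relevant direction, using metric compatibility $\pa_\bullet\langle A,s\rangle=\langle\MD_iA,s\rangle+\langle A,\MD_i^{\da_{H_0}}s\rangle$. Two structural facts make the stated right-hand side appear. First, since $s$ is Hermitian one has $\MD_i^{\da_{H_0}}s=(\MD_is)^{\da_{H_0}}$ and $\gamma(-s)s=s$, so the first-derivative terms reassemble into second derivatives of $|s|_{H_0}^2$ and produce the operator $\Delta=\Delta_\Sigma-\pa_y^2$. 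Second, since $\ad_s$ has real spectrum and $\lambda\mapsto\frac{1-e^{-\lambda}}{\lambda}$ is strictly positive, $\gamma(-s)$ is a positive self-adjoint operator on endomorphisms with a well-defined square root $v(s)=\sqrt{\gamma(-s)}$; this lets one rewrite expressions such as $\langle\gamma(-s)\MD_i^{\da_{H_0}}s,\MD_i^{\da_{H_0}}s\rangle$ as $|v(s)\MD_i^{\da_{H_0}}s|^2$. After symmetrizing over $\MD_i$ and $\MD_i^{\da_{H_0}}$, completing the square delivers exactly the nonnegative term $|v(s)\na s|^2$ of \eqref{vsdefinition}.

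The main obstacle is the bookkeeping in this last step. One must symmetrize carefully over the holomorphic and antiholomorphic operators $\MD_1,\MD_2$, whose contributions combine to give $\Delta_\Sigma$, and the real operator $\MD_3$, which supplies $-\pa_y^2$, while tracking the constants produced by $\Lambda$ acting on the K\"ahler form. One must also check that the first-order cross terms arising from $\MD_i(\gamma(-s))$ — equivalently, the pairing of $Q(s)$ against $s$ — conspire with the leading terms so that only the clean Laplacian survives, with no residual first-order quantity. The positivity of $\gamma(-s)$, which is what guarantees both that $v(s)$ is genuinely defined and that the quadratic term carries the correct sign, is the conceptual key that makes the identity useful, as it is what underlies the maximum principle to be exploited later. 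I expect no difficulty in the algebra beyond patience, since the analogous computation is carried out in \cite{HeMazzeo2018} for $\beta=0$ and only the $\beta$-dependent coefficients change here.
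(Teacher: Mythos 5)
Your proposal is correct and follows essentially the same route as the paper: the transformation rule for the adjoints $\MD_i^{\da_H}$ under $H=H_0e^s$ (your conjugation formula $e^{-s}\circ\MD_i^{\da_{H_0}}\circ e^s$ is the paper's $\MD_i^{\da_{H_0}}+e^{-s}(\MD_i^{\da_{H_0}}e^s)$), the identity $e^{-s}\MD(e^s)=\gamma(-s)\MD s$, the Leibniz splitting into $\gamma(-s)\ML_{H_0}s+Q(s)$, and then the pointwise pairing with $s$ using self-adjointness and positivity of $\gamma(-s)$, $\gamma(-s)s=s$, and metric compatibility to assemble $\Delta|s|^2_{H_0}+|v(s)\na s|^2$. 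The only caveat is terminological: the identity \eqref{Keyequation2} is pointwise, so "integrating by parts" should be read as the Leibniz-rule rearrangement you in fact carry out, exactly as in the paper's term-by-term computation.
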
	
\begin{proof}
First we have $\MD_i^{\da_H}=\MD_i^{\da_{H_0}}+e^{-s}(\MD_i^{\da_{H_0}}e^s)$. Next, if $X(w)$ is any smooth family of
Hermitian metrics, then
\begin{equation}
\pa_w e^{X}=e^{X}\gamma(-X)\pa_w X =\gamma(X)\pa_w X e^{X},
\label{Exponentialderivative}
\end{equation}
where $\del_w$ denotes a `generic' derivative, so e.g.\ could be $\MD_i$ or $\MD_i^{\dag_H}$.  Using these, 
we now compute
\begin{equation*}
\begin{split}
 \Omega_H&=\Omega_{H_0}+\frac{i}{2}\Lambda(\cos^2\beta\MD_1(e^{-s}\MD_1^{\da_{H_0}}e^s)-\sin^2\beta
 \MD_2(e^{-s}\MD_2^{\da_{H_0}}e^s))+ \MD_3(e^{-s}\MD_3^{\da_{H_0}}e^s)\\
 &=\Omega_{H_0}+\frac{i}{2}\Lambda(\cos^2\beta\MD_1(\gamma(-s)\MD_1^{\da_{H_0}}s)-\sin^2\beta\MD_2(\gamma(-s)
 \MD_2^{\da_{H_0}}s))+ \MD_3(\gamma(-s)\MD_3^{\da_{H_0}}s)\\ &=\Omega_{H_0}+\gamma(-s)\ML_{H_0}s+Q(s).
\end{split}
\end{equation*}
The identity \eqref{Keyequation2} requires an analysis of the expression
\begin{equation*}
\begin{split}
&\langle \Omega_H-\Omega_{H_0},s\rangle _{H_0}\\
=&\langle \frac{i}{2}\Lambda(\cos^2\beta\MD_1(\gamma(-s)\MD_1^{\da,H_0}s)-\sin^2\beta
\MD_2(\gamma(-s)\MD_2^{\da,H_0}s))+\MD_3(\gamma(-s)\MD_3^{\da,H_0}s),s\rangle _{H_0}.
\end{split}
\end{equation*}
We consider the summands in turn.
	
For the first, we compute 
\begin{equation*}
\begin{split}
\lan\frac{i}{2}\Lambda\MD_1(\gamma(-s)\MD_1^{\da_{H_0}}s),s\ran&=\frac{i}{2}\Lam\bar{\pa}\lan\gamma(-s)\MD_1^{\da_{H_0}}s,s 
\ran+\frac 12 \lan \gamma(-s)\MD_1^{\da_{H_0}}s,\MD_1^{\da_{H_0}}s\ran\\
&=\frac{i}{2}\Lambda \bar{\pa}\lan\MD_1^{\da_{H_0}}s,\gamma(-s)s \ran+\frac{1}{2}|v(-s)\MD_1^{\da_{H_0}}s|^2\\
&=i\Lambda \bar{\pa}\pa|s|^2+\frac{1}{2}|v(-s)\MD_1^{\da_{H_0}}s|^2\\
&=\frac{1}{2}\Delta_{\Sigma}|s|^2+\frac{1}{4}|v(-s)\MD_1^{\da_{H_0}}s|^2+\frac{1}{4}|v(-s)\MD_1s|^2.
\end{split}
\end{equation*}
	
The second term follows from
\begin{equation*}
\begin{split}
\lan\frac{i}{2}\Lambda\MD_2(\gamma(-s)\MD_2^{\da_{H_0}}s),s\ran&=\frac i2\Lam{\pa}\lan\gamma(-s)\MD_2^{\da_{H_0}}s,s 
\ran-\frac 12\lan \gamma(-s)\MD_2^{\da_{H_0}}s,\MD_2^{\da_{H_0}}s\ran\\
&=\frac i2\Lambda \pa\lan\MD_2^{\da_{H_0}}s,\gamma(-s)s \ran-\frac 12|v(-s)\MD_2^{\da_{H_0}}s|^2\\
&=i\Lambda \pa\bar{\pa}|s|^2-\frac12 |v(-s)\MD_2^{\da_{H_0}}s|^2\\
&=-\frac12\Delta_{\Sigma}|s|^2-\frac{1}{4}|v(-s)\MD_2^{\da_{H_0}}s|^2-\frac{1}{4}|v(-s)\MD_2s|^2.
\end{split}
\end{equation*}
	
Finally, calculating as previously, the third term equals
\begin{equation*}
\begin{split}
\lan \MD_3(\gamma(-s)\MD_3^{\da_{H_0}}s),s\ran&=\pa_y\lan \gamma(-s)\MD_3^{\da_{H_0}}s,s\ran+ 
\lan \gamma(-s)\MD_3^{\da_{H_0}}s,\MD_3^{\da_{H_0}}s\ran\\ &=-\pa_y^2|s|^2+\frac{1}{2}(|v(s)\MD_3s|^2+|v(s)\MD_3^{\da_{H_0}}|^2).
\end{split}
\end{equation*}
	
Combining these yields the desired identity. 
\end{proof}

Next recall the K\"ahler identities \cite[Lemma 3.1]{Simpson1988Construction}: 
\begin{equation}
\begin{split}
&i[\Lambda,\MD_1]=(\MD_1^{\da_H})^{\st},\ \ i[\Lambda,\MD_1^{\da_H}]=-(\MD_1)^{\st}\\
&i[\Lambda,\MD_2]=-(\MD_2^{\da_H})^{\st},\ \ i[\Lambda,\MD_2^{\da_H}]=(\MD_2)^{\st}
\end{split}
\end{equation}
Noting also that $(\MD_3^{\da_{H_0}})^{\st}=\MD_3$, we conclude
\begin{corollary}
$\ML_{H_0}=\frac{1}{2}(\cos^2\beta(\MD_1^{\da_{H_0}})^{\st}\MD_1^{\da_{H_0}}+\sin^2\beta(\MD_2^{\da_{H_0}})^{\st}\MD_2^{\da_{H_0}})+
\MD_3 \MD_3^{\da_{H_0}}$.  
\end{corollary}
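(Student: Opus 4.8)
The plan is to apply the Kähler identities recorded above term-by-term, exploiting that on the Riemann surface $\Si$ the operator $\Lambda$ annihilates every form of pure bidegree. First I would record the bidegrees of the intermediate quantities: since $\MD_1$ has type $(0,1)$, its adjoint $\MD_1^{\da_{H_0}}$ has type $(1,0)$, so $\MD_1^{\da_{H_0}}s\in\Omega^{1,0}(E)$; dually $\MD_2$ has type $(1,0)$, so $\MD_2^{\da_{H_0}}s\in\Omega^{0,1}(E)$. Consequently $\Lambda(\MD_1^{\da_{H_0}}s)=\Lambda(\MD_2^{\da_{H_0}}s)=0$, which is precisely the fact that will kill the unwanted commutator terms and collapse each Kähler identity into a single Bochner-type expression.

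For the $\MD_1$-summand I would rewrite the identity $i[\Lambda,\MD_1]=(\MD_1^{\da_{H_0}})^{\st}$ as $i\Lambda\MD_1 = i\MD_1\Lambda + (\MD_1^{\da_{H_0}})^{\st}$ and apply it to the $(1,0)$-form $\MD_1^{\da_{H_0}}s$. The term $i\MD_1\Lambda(\MD_1^{\da_{H_0}}s)$ vanishes by the bidegree observation, leaving $i\Lambda\MD_1\MD_1^{\da_{H_0}}s=(\MD_1^{\da_{H_0}})^{\st}\MD_1^{\da_{H_0}}s$; multiplying by $\tfrac12\cos^2\beta$ gives the first summand of the corollary. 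For the $\MD_2$-summand I would instead use $i[\Lambda,\MD_2]=-(\MD_2^{\da_{H_0}})^{\st}$, which after the same cancellation yields $i\Lambda\MD_2\MD_2^{\da_{H_0}}s=-(\MD_2^{\da_{H_0}})^{\st}\MD_2^{\da_{H_0}}s$. The explicit minus sign in front of $\sin^2\beta$ in the definition of $\ML_{H_0}$ then flips this back to $+\tfrac12\sin^2\beta\,(\MD_2^{\da_{H_0}})^{\st}\MD_2^{\da_{H_0}}s$. The $\MD_3$-summand requires no Kähler identity, since it involves only the $y$-direction and already appears in the target form; the relation $(\MD_3^{\da_{H_0}})^{\st}=\MD_3$ serves only to exhibit it as a nonnegative operator.

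This is a direct computation rather than a deep argument, so there is no substantive obstacle; the only points demanding care are the sign bookkeeping between the two opposite-signed Kähler identities for $\MD_1$ and $\MD_2$, together with the clean verification that $\Lambda$ annihilates the pure-type intermediate forms $\MD_1^{\da_{H_0}}s$ and $\MD_2^{\da_{H_0}}s$. Once those are in hand, summing the three contributions produces the stated formula for $\ML_{H_0}$.
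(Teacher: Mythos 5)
Your proposal is correct and follows essentially the same route as the paper, which derives the corollary directly from the stated K\"ahler identities $i[\Lambda,\MD_1]=(\MD_1^{\da_{H_0}})^{\st}$, $i[\Lambda,\MD_2]=-(\MD_2^{\da_{H_0}})^{\st}$ together with $(\MD_3^{\da_{H_0}})^{\st}=\MD_3$; your explicit observation that $\Lambda$ annihilates the pure-type forms $\MD_1^{\da_{H_0}}s$ and $\MD_2^{\da_{H_0}}s$, so each commutator collapses to a single composition, is exactly the (unstated) mechanism behind the paper's one-line deduction, and your sign bookkeeping for the $-\sin^2\beta$ term is right.
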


\begin{proposition}
Setting $\na_1=\MD_1+\MD_1^{\da}, \na_2=\MD_2+\MD_2^{\da}$, then there are Weitzenb\"ock formulae
\begin{equation}
(\MD_1^{\da})^{\st}\MD_1^{\da}=\frac{1}{2}\na_1^{\st}\na_1+\frac{i}{2}\Lambda[\MD_1, \MD_1^{\da}], \qquad \;(\MD_1)^\st\MD_1
=\frac{1}{2}\na_1^{\st}\na_1-\frac{1}{2}\Lambda[\MD_1,\MD_1^{\da}],
\label{W1}
\end{equation} 
\begin{equation}
(\MD_2^{\da})^{\st}\MD_2^{\da}=\frac{1}{2}\na_2^{\st}\na_2+\frac{i}{2}\Lambda[\MD_2,\MD_2^{\da}],\qquad \;(\MD_2)^\st\MD_2
=\frac{1}{2}\na_2^{\st}\na_2-\frac{1}{2}\Lambda[\MD_2,\MD_2^{\da}],
\label{W2}
\end{equation}
and also 
\begin{equation}
  \MD_3\MD_3^{\da}=-(\MD_y^2+\frac{\phi_1^2}{\cos^2\beta})+\frac{1}{2}[\MD_3,\MD_3^{\da}],\;\MD_3^{\da}\MD_3=-(\MD_y^2+
  \frac{\phi_1^2}{\cos^2\beta})- \frac{1}{2}[\MD_3,\MD_3^{\da}].
\label{W3}
\end{equation}
\end{proposition}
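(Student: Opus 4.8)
The plan is to read \eqref{W1}--\eqref{W2} as Bochner--Kodaira--Nakano identities on the fibre surface $\Si$, and \eqref{W3} as a direct algebraic expansion in the $y$-direction; throughout I work on a bundle section $s$, i.e.\ a $0$-form. The first ingredient is the purely algebraic splitting $\na_1^{\st}\na_1 = (\MD_1)^{\st}\MD_1 + (\MD_1^{\da})^{\st}\MD_1^{\da}$, obtained by expanding $\na_1^{\st}\na_1=(\MD_1^{\st}+(\MD_1^{\da})^{\st})(\MD_1+\MD_1^{\da})$ and checking that the two mixed terms $\MD_1^{\st}\MD_1^{\da}$ and $(\MD_1^{\da})^{\st}\MD_1$ vanish for bidegree reasons: on a curve $\MD_1$ and $\MD_1^{\da}$ raise the anti-holomorphic and holomorphic degrees respectively, so their formal adjoints lower them, and each mixed composition pushes a section out of the admissible bidegree range. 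The identical statement holds for $\na_2,\MD_2$.

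The second ingredient is the Nakano difference. Using the Kähler identities recalled just above in the form $(\MD_1^{\da})^{\st}=i[\Lambda,\MD_1]$ and $(\MD_1)^{\st}=-i[\Lambda,\MD_1^{\da}]$, I would compute the two half-Laplacians directly, discarding every term in which $\Lambda$ meets the pure form $\MD_1 s$ or $\MD_1^{\da}s$ (since $\Lambda$ annihilates $(1,0)$- and $(0,1)$-forms on a curve). What survives is $(\MD_1^{\da})^{\st}\MD_1^{\da}-(\MD_1)^{\st}\MD_1 = i\Lambda(\MD_1\MD_1^{\da}+\MD_1^{\da}\MD_1)=i\Lambda[\MD_1,\MD_1^{\da}]$, where the bracket is the graded (hence anti-)commutator, i.e.\ the curvature of $\na_1$. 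Adding and subtracting this with the splitting identity solves the resulting $2\times 2$ linear system and yields \eqref{W1}; running the same argument with the opposite-sign Kähler identities for $\MD_2$ produces \eqref{W2}.

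For \eqref{W3} there is no complex structure in the $y$-direction, so I would simply substitute $\MD_3=\MD_y-i\phi_1/\cos\beta$ and $\MD_3^{\da}=\MD_y+i\phi_1/\cos\beta$ and expand both orderings. The $\phi_1$ cross-terms cancel in the symmetric part, leaving the stated quadratic term, while their difference is exactly $[\MD_3,\MD_3^{\da}]$, which carries the $\pm\tfrac12[\MD_3,\MD_3^{\da}]$ splitting.

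I expect the only genuine difficulty to be bookkeeping rather than anything conceptual. One must keep the Hermitian bundle adjoint $\da$ strictly apart from the formal $L^2$-adjoint $\st$—these disagree by a sign for the skew-adjoint operator $\MD_y$, and it is precisely this that fixes the overall sign in \eqref{W3}—while also respecting the graded-versus-ordinary commutator convention and the bidegree vanishing of the cross terms and of $\Lambda$ on non-$(1,1)$ forms. Making every factor of $i$ and every sign consistent with the conventions of the Kähler identities is the step that demands the most care.
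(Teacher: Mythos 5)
Your treatment of \eqref{W1} and \eqref{W2} follows exactly the paper's route: split $\na_i^{\st}\na_i$ into the two half-Laplacians (the mixed terms vanish by bidegree on $0$-forms), convert each half-Laplacian via the K\"ahler identities into $\pm i\Lambda$ applied to a composition of $\MD_i$ and $\MD_i^{\da}$, and recombine using the graded commutator. That part is sound. (A literal run of the $\MD_2$ case with the displayed opposite-sign identities gives $(\MD_2^{\da})^{\st}\MD_2^{\da}=\tfrac12\na_2^{\st}\na_2-\tfrac{i}{2}\Lambda[\MD_2,\MD_2^{\da}]$, the opposite sign from \eqref{W2} as printed; but the paper's own displayed computation has the same tension with its stated identities, so this is a matter of sign conventions rather than a defect of your argument.)

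The genuine gap is in \eqref{W3}. With your prescribed substitution $\MD_3=\MD_y-i\phi_1/\cos\beta$, $\MD_3^{\da}=\MD_y+i\phi_1/\cos\beta$, set $A=\MD_y$ and $B=-i\phi_1/\cos\beta$, so $\MD_3=A+B$ and $\MD_3^{\da}=A-B$. Then the symmetric part is
\begin{equation*}
\tfrac12\left(\MD_3\MD_3^{\da}+\MD_3^{\da}\MD_3\right)=A^2-B^2=\MD_y^2+\frac{\phi_1^2}{\cos^2\beta},
\end{equation*}
which is the \emph{negative} of the quadratic term asserted in \eqref{W3}: your expansion yields $\MD_3\MD_3^{\da}=+(\MD_y^2+\phi_1^2/\cos^2\beta)+\tfrac12[\MD_3,\MD_3^{\da}]$, contradicting the statement. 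The operator denoted $\MD_3^{\da}$ in \eqref{W3} must be taken as the formal $L^2$-adjoint in the $y$-direction, $\MD_3^{\da}=-\MD_y-i\phi_1/\cos\beta$ (the covariant derivative flips sign under integration by parts, while the pointwise Hermitian term $-i\phi_1/\cos\beta$ does not), and this is precisely what the paper substitutes: from $(\MD_3-\MD_3^{\da})^2=4\MD_y^2$ and $\MD_3^2+(\MD_3^{\da})^2=2\MD_y^2-2\phi_1^2/\cos^2\beta$ one gets $\MD_3\MD_3^{\da}+\MD_3^{\da}\MD_3=-2(\MD_y^2+\phi_1^2/\cos^2\beta)$, hence the stated sign. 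Your closing paragraph correctly identifies the $\da$-versus-$\st$ discrepancy on the skew part as the crux, but the proof as written implements the wrong choice; the correction has to be made in the substitution itself rather than deferred to ``bookkeeping,'' since otherwise the argument derives the negative of \eqref{W3}.
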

\begin{proof}
We compute
\begin{equation}
\begin{split}
\na_1^{\st}\na_1&=\MD_1^{\st}\MD_1+(\MD_1^{\da})^{\st}\MD_1^{\da}=-i\Lambda \MD_1^{\da}\MD_1+i\Lambda \MD_1\MD_1^{\da} \\
& =2i\Lambda \MD_1\MD_1^{\da}-i\Lambda [\MD_1,\MD_1^{\da}]  
=-2i\Lam \MD_1^{\da}\MD_1+i\Lambda [\MD_1,\MD_1^{\da}].
\end{split}
\end{equation}
	
For $\MD_2$, we have
\begin{equation}
\begin{split}
\na_2^{\st}\na_2&=\MD_2^{\st}\MD_2+(\MD_2^{\da})^{\st}\MD_2^{\da}=-i\Lambda \MD_2^{\da}\MD_2+i\Lambda \MD_2\MD_2^{\da} \\
& =-2i\Lambda \MD_2\MD_2^{\da}+i\Lambda [\MD_2,\MD_2^{\da}]  
=2i\Lam \MD_2^{\da}\MD_2-i\Lambda [\MD_2,\MD_2^{\da}].
\end{split}
\end{equation}
	
Finally, to obtain \eqref{W3}, the formulas $\MD_3=\MD_y-i\frac{\phi_1}{\cos\beta}$ and $\MD_3^{\da}=-\MD_y-i\frac{\phi_1}{\cos\beta}$
lead to $(\MD_3-\MD_3^{\da})^2=4\MD_y^2,$ $(\MD_3)^2+(\MD_3^{\da})^2=2\MD_y^2-2\frac{\phi_1^2}{\cos^2\beta}$.
In addition, 
$$
(\MD_3-\MD_3^{\da})^2=\MD_3^2+(\MD_3^{\da})^2-\MD_3\MD_3^{\da}-\MD_3^{\da}\MD_3,
$$
so altogether
\begin{equation*}
\begin{split}
2(\MD_y^2+\frac{\phi_1^2}{\cos^2\beta})=-\MD_3\MD_3^{\da}-\MD_3^{\da}\MD_3=-[\MD_3,\MD_3^{\da}]-2\MD_3^{\da}\MD_3
=[\MD_3,\MD_3^{\da}]-2\MD_3\MD_3^{\da}.
\end{split}
\end{equation*}
\end{proof}
This leads to a simpler expression for $\ML_H$: 
\begin{corollary}
\begin{equation}
 \ML_H=\frac{1}{4}(\cos^2\beta\, \na_1^{\st}\na_1+\sin^2\beta\, \na_2^{\st}\na_2)-(\MD_y^2+
 \frac{\phi_1^2}{\cos^2\beta})+\frac{1}{2}[\Omega_H,\cdot ],
 \label{MLH}
\end{equation}
where $\phi_1^2=[\phi_1,[\phi_1,\;]]$.

\end{corollary}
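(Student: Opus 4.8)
The plan is to read off \eqref{MLH} as a direct algebraic consequence of the two preceding results, with no further analysis required. Concretely, I would start from the expression for $\ML_{H_0}$ furnished by the corollary immediately above,
\[
\ML_{H_0}=\tfrac12\bigl(\cos^2\beta\,(\MD_1^{\da})^{\st}\MD_1^{\da}+\sin^2\beta\,(\MD_2^{\da})^{\st}\MD_2^{\da}\bigr)+\MD_3\MD_3^{\da},
\]
and substitute the three Weitzenb\"ock identities \eqref{W1}, \eqref{W2}, \eqref{W3} for the three summands.

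Substituting \eqref{W1} and \eqref{W2}, each factor $(\MD_i^{\da})^{\st}\MD_i^{\da}$ breaks into a second-order Bochner piece $\tfrac12\na_i^{\st}\na_i$ and a zeroth-order curvature piece proportional to $\Lambda[\MD_i,\MD_i^{\da}]$. The coefficients $\tfrac12\cos^2\beta$ and $\tfrac12\sin^2\beta$ then immediately assemble the Bochner pieces into the term $\tfrac14\bigl(\cos^2\beta\,\na_1^{\st}\na_1+\sin^2\beta\,\na_2^{\st}\na_2\bigr)$ of \eqref{MLH}. Substituting \eqref{W3} into the remaining summand $\MD_3\MD_3^{\da}$ produces directly both $-\bigl(\MD_y^2+\phi_1^2/\cos^2\beta\bigr)$ and the curvature piece $\tfrac12[\MD_3,\MD_3^{\da}]$.

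It then remains to collect the three curvature pieces and recognize them as $\tfrac12[\Omega_H,\cdot]$. Here I would use that $\ML_{H_0}$ acts on sections $s$ of $\GE$ and that each graded commutator $[\MD_i,\MD_i^{\da}]$ is the corresponding curvature, acting on $s$ by the adjoint (bracket) action; comparing the collected terms with the definition \eqref{generalizaedcurvature} of $\Omega_H$ then identifies them with $\tfrac12[\Omega_H,\,\cdot\,]$. The one delicate point in the bookkeeping, and the step I would verify most carefully, is the relative sign between the $\MD_1$ and $\MD_2$ contributions: the curvature of the $(0,1)$-operator $\MD_1$ and of the $(1,0)$-operator $\MD_2$ enter with opposite signs, inherited from the opposite signs of the corresponding K\"ahler identities $i[\Lambda,\MD_1]=(\MD_1^{\da})^{\st}$ versus $i[\Lambda,\MD_2]=-(\MD_2^{\da})^{\st}$, and it is precisely this that reproduces the $\cos^2\beta(\cdots)-\sin^2\beta(\cdots)$ structure of $\Omega_H$ rather than a sum. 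There is no genuine analytic difficulty; the entire content is the sign bookkeeping that lets the three curvature terms coalesce into a single commutator with the moment map.
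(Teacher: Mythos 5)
Your proposal is correct and is essentially the paper's own (implicit) argument: the corollary is presented there without proof, as a direct substitution of the Weitzenb\"ock formulas \eqref{W1}--\eqref{W3} into the preceding expression $\ML_{H_0}=\frac{1}{2}(\cos^2\beta\,(\MD_1^{\da})^{\st}\MD_1^{\da}+\sin^2\beta\,(\MD_2^{\da})^{\st}\MD_2^{\da})+\MD_3\MD_3^{\da}$, with the zeroth-order curvature pieces collected into $\frac{1}{2}[\Omega_H,\cdot]$. The sign point you flag as delicate is indeed the crux, and you resolve it correctly: as printed, \eqref{W2} carries $+\frac{i}{2}\Lambda[\MD_2,\MD_2^{\da}]$, which upon literal substitution would give a sum rather than the difference $\cos^2\beta[\MD_1,\MD_1^{\da}]-\sin^2\beta[\MD_2,\MD_2^{\da}]$ appearing in $\Omega_H$; the K\"ahler identity $i[\Lambda,\MD_2]=-(\MD_2^{\da})^{\st}$ forces the opposite sign in the $\MD_2$ Weitzenb\"ock formula, exactly as you argue, and with that sign the three curvature terms coalesce into $\frac{1}{2}[\Omega_H,\cdot]$ as claimed.
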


\begin{subsection}{Indicial Roots}
  The mapping properties and regularity of solutions for the operator $\ML_H$ in \eqref{MLH} rely on the determination
  of the indicial roots of this operator. Note that the final term in $\ML_H$ which involves $\Omega_H$ is absent since
  we work at an exactx solution. 

  First some notation. Recall the component operators
  $$
  \MD_1=(D_{\bz}-\phi_{\bz}\tan\beta)d\bz,\; 	\MD_2=(D_z+\phi_z\cot \beta)dz,\; \MD_3=D_y-i\frac{\phi_1}{\cos \beta},
  $$
 and the model tilted Nahm singularities near $y=0$
\begin{equation*}
\begin{split}
&A_z\sim y^{-1}\mfe^+\sin\beta+\MO(y^{-1+\ep}),\;A_{\bz}\sim y^{-1}\mfe^{-}\sin\beta+\MO(y^{-1+\ep}),\\
&\phi_z\sim y^{-1}\mfe^+\cos\beta+\MO(y^{-1+\ep}),\;\phi_{\bz}\sim y^{-1}\mfe^{-}\cos\beta+\MO(y^{-1+\ep}),\;\phi_1\sim
\frac{i}{2y}\mfe^0\cos\beta+\MO(y^{-1+\ep}). 
\end{split}
\end{equation*}
Putting these together, we have 
\begin{equation}
\begin{split}
  &\cos\beta \, \MD_1=\cos\beta \, D_{A_{\bar{z}}}-\sin\beta\, \phi_{\bz}\sim \MO(1),\;\sin\beta \, \MD_2=\sin\beta \, D_{A_z}-
  \cos\beta \, \phi_z\sim  \frac{\mfe^+}{y},\\ &\MD_3=\MD_y-\frac{i\phi_1}{\cos\beta}\sim\pa_y+\frac{1}{2y}\mfe^0
\end{split}
\end{equation}

The operator $\ML_H$ is singular at $y=0$, and its behavior there is well-approximated by the so-called normal operator,
cf.\ \cite{Mazzeo1991}:
$$
N(\ML_{H_0}):=\Delta_{\mathbb{R}^3}-\frac{1}{2}([\mfe^+,[\mfe^-,s]+[\mfe^-,[\mfe^+,s]])-\frac{1}{4}[\mfe^0,[\mfe^0,s]].
$$
This is both translation-invariant in $z$ and dilation-invariant in $(z,y)$ jointly, and is identified with the linearization of the
nonlinear equations at the global model solution on $\RR^3_+$.

A key invariant of $\ML_H$, or equivalently of its normal operator $\ML_{H_0}$, is its associated set
of indicial roots. These are the formal rates of growth or decay of solutions at $y=0$. By definition, $\lambda$
is an indicial root of $\ML_H$ at $(z_0, 0)$ if there exists a section $s$ defined in a neighborhood of this point such that
$$
\ML_{H_0}(y^{\lam}s_0)= 0, \quad \mbox{where}\ s_0=s|_{z=z_0, y=0}.
$$
Using that $\ML_H (y^\lambda s) = N(\ML_{H_0})(y^{\lam}s_0)+\MO(y^{\lam-1})$, we see that $\lambda$ is an indicial root
if there exists some $s$ such that $\ML_H(y^{\lam}s)=\MO(y^{\lam-1})$, in contrast to the expected rate $\MO(y^{\lam-2})$.
	
From the explicit form of this operator, we see that $\lam$ is an indicial root if
\begin{equation}
\lam(\lam-1)s = \frac{1}{2}([\mfe^+,[\mfe^-,s]+[\mfe^-,[\mfe^+,s]])+\frac{1}{4}[\mfe^0,[\mfe^0,s]].
\end{equation}
The operator on the right is the Casimir operator for $\slf_2$,
$$
\Delta_{\mathrm{Cas}}s:=\frac{1}{2}([\mfe^+,[\mfe^-,s]+[\mfe^-,[\mfe^+,s]])+\frac{1}{4}[\mfe^0,[\mfe^0,s]],
$$
so $\lam$ is an indicial root for $\ML$ if and only if $\lam(\lam-1)$ is an eigenvalue for $\Delta_{\mathrm{Cas}}$.

Using the known values of this Casimir spectrum for $\mathfrak{sl}_{n}$, we arrive at a result mirroring the result and calculations
in \cite{MazzeoWitten2013}:
\begin{proposition}
The set of indicial roots of the twisted \EBE with tilted Nahm pole boundary conditions is
$\{-(n-1), \dots, -1, 2,\ldots, n\}$. 
\end{proposition}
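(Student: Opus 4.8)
The reduction already carried out reduces everything to an eigenvalue problem: $\lambda$ is an indicial root precisely when $\lambda(\lambda-1)$ lies in the spectrum of the Casimir operator $\Delta_{\mathrm{Cas}}$ acting on $\slf_n$ via the adjoint action of the principal triple $\{\mfe^+,\mfe^-,\mfe^0\}$. The plan is therefore to compute the spectrum of $\Delta_{\mathrm{Cas}}$ and then to solve the scalar equation $\lambda(\lambda-1)=\mu$ for each eigenvalue $\mu$. First I would record that, with the relations $[\mfe^+,\mfe^-]=\mfe^0$ and $[\mfe^0,\mfe^\pm]=\pm 2\mfe^\pm$, the triple is an ordinary $\slf_2$-triple and $\Delta_{\mathrm{Cas}}=\tfrac12(\ad_{\mfe^+}\ad_{\mfe^-}+\ad_{\mfe^-}\ad_{\mfe^+})+\tfrac14\ad_{\mfe^0}^2$ is exactly one half of the standard quadratic $\slf_2$-Casimir. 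Evaluating on the highest weight vector $v$ of the irreducible of highest $\mfe^0$-weight $2j$ (dimension $2j+1$), and using $\ad_{\mfe^+}v=0$ together with $\ad_{\mfe^+}\ad_{\mfe^-}v=\ad_{\mfe^0}v=2jv$, gives $\Delta_{\mathrm{Cas}}v=(j+j^2)v$, so the eigenvalue on the spin-$j$ summand is $j(j+1)$.

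The main step is the decomposition of $\slf_n$ into irreducibles for the principal $\slf_2$. The cleanest self-contained route is to observe that, since $\mfe^+$ is a single regular nilpotent Jordan block, the defining representation $\CC^n$ is the irreducible $\slf_2$-module of spin $\tfrac{n-1}{2}$, which is self-dual. Hence by Clebsch--Gordan $\mathrm{End}(\CC^n)\cong \CC^n\otimes(\CC^n)^{\ast}\cong\bigoplus_{k=0}^{n-1}V_k$, where $V_k$ is the spin-$k$ irreducible. Removing the trivial summand $V_0$ (the trace) identifies $\slf_n\cong\bigoplus_{k=1}^{n-1}V_k$, so the spins occurring are exactly $j=1,2,\dots,n-1$, each with multiplicity one (equivalently, this is Kostant's statement that the exponents of $\slf_n$ are $1,\dots,n-1$). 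By the previous paragraph the Casimir spectrum on $\slf_n$ is therefore $\{\,j(j+1): j=1,\dots,n-1\,\}$; note that the absence of a $V_0$ summand is precisely what will exclude $\lambda=0,1$ from the final set.

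Finally I would solve $\lambda(\lambda-1)=j(j+1)$ for each such $j$: the discriminant is $1+4j(j+1)=(2j+1)^2$, so $\lambda=\tfrac{1\pm(2j+1)}{2}$, giving the pair $\lambda=j+1$ and $\lambda=-j$. As $j$ ranges over $1,\dots,n-1$ the first family sweeps out $\{2,3,\dots,n\}$ and the second $\{-1,-2,\dots,-(n-1)\}$, whose union is the asserted set $\{-(n-1),\dots,-1,2,\dots,n\}$. I do not expect a serious obstacle here; the only care needed is in two normalization points. The first is that the factor $\tfrac14$ on the $\mfe^0$-term makes $\Delta_{\mathrm{Cas}}$ the half-Casimir, producing eigenvalue $j(j+1)$ rather than $2j(j+1)$. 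The second is that although $s$ is Hermitian, hence $i\su(n)$-valued, the operator $\Delta_{\mathrm{Cas}}$ is real and $\slf_2$-equivariant, so its spectrum on the real form $i\su(n)$ coincides with that on the complexification $\slf_n$ used in the decomposition. Once the Clebsch--Gordan computation is in hand the statement follows by this elementary bookkeeping.
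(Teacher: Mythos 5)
Your proposal is correct and follows the same route as the paper: reduce to the eigenvalue problem $\lambda(\lambda-1)\in\operatorname{spec}(\Delta_{\mathrm{Cas}})$, decompose $\slf_n$ under the principal $\slf_2$ into spin-$j$ irreducibles with $j=1,\dots,n-1$, and solve the quadratic to get $\lambda=j+1$ or $\lambda=-j$. The only difference is that the paper simply cites the known Casimir spectrum (referring to the calculations in Mazzeo--Witten), whereas you derive it self-containedly via the Clebsch--Gordan decomposition of $\CC^n\otimes(\CC^n)^{\ast}$ and also check the harmless normalization and real-form points; both computations are accurate.
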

\end{subsection}

\begin{subsection}{Function spaces and Mapping Properties}
Using the results of the last subsection and invoking the theory in \cite{Mazzeo1991}, we now state the
Fredholm theory for the operator $\ML_H$ acting on a family of weighted Holder spaces adapted to the degeneracy of the operator.
	
\begin{definition}
Define $\calC^{k,\alpha}_{\ie}( \Si \ti \RP)$ to be the space of all functions $u$ on $\Si_x \ti \RP_y$ such that
\begin{itemize}
\item[i)] In the region $\{y \leq 1\}$, 
\[
||u||_{L^\infty} + \sup_{i + |\beta| \leq k} [ (y\del_y)^i (y\del_x)^\beta u ]_{\ie; 0,\alpha} < \infty,
\]
where 
\[
[v]_{\ie; 0, \alpha} := \sup_{(y,x) \neq (y', x')\atop 
y, y' \leq 1}  \frac{|u(y,x) - u(y', x')|(y+y')^\alpha}{|y-y'|^\alpha + |x-x'|^\alpha}.
\]
\item[ii)] Away from all boundaries we require that $u$ lies in the ordinary H\"older space $\calC^{k,\alpha}$ 
on each slab $\Si \ti [L, L+1]$, uniformly for $L \geq 1$. 
\end{itemize}
\end{definition}
Fixing $0 < \al < 1$, for any $\mu,\delta \in \RR$, define
\begin{equation}
\begin{split}
\calX^k_{\mu, \delta}(\Si \ti \RP; \isu(E, H_0))=y^\mu e^{\delta y} \calC^{k,\alpha}_{\ie}(\Si \ti \RP) & = \{u = y^\mu e^{y\delta}v: v \in \calC^{k,\alpha}_{\ie} \}.
\end{split}
\end{equation}
\begin{theorem}{\cite{MazzeoWitten2013, HeMazzeo2018}}
Suppose $\mu\in(-1,2)$ and $\delta>0$; then for any $k\geq 0$ and $0<\al<1$, 
\begin{equation}
\ML_H:  \calX^{k+2}_{\mu, -\delta}(\Si \ti \RP; \isu(E, H_0)) \longrightarrow \calX^k_{\mu-2,  -\delta}(\Si \ti \RP; \isu(E, H_0)).
\label{Fred1map}
\end{equation}
is a Fredholm operator.
\end{theorem}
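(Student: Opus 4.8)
The plan is to recognize $\ML_H$ as an elliptic uniformly degenerate (edge) operator in the sense of \cite{Mazzeo1991} and to verify the standard package of hypotheses that guarantees the Fredholm property on the weighted incomplete-edge spaces $\calX^k_{\mu,-\delta}$: edge-ellipticity on the interior, invertibility of the normal operator for the chosen weight $\mu$, and control of the cylindrical end $y\to\infty$ through the exponential weight. First I would check edge-ellipticity using the Weitzenb\"ock form of $\ML_H$ from the preceding corollary: its principal part is $\frac14(\cos^2\beta\,\na_1^{\st}\na_1+\sin^2\beta\,\na_2^{\st}\na_2)-\MD_y^2$, so $y^2\ML_H$ is a second-order operator whose principal symbol, expressed in the rescaled covectors $y\tau,\,y\eta$ dual to $\pa_y,\pa_z$, is a positive-definite quadratic form. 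Thus $\ML_H$ is a genuine (unrescaled) degenerate operator lowering the $y$-homogeneity by two, which explains the weight shift $\mu\to\mu-2$, and $y^2\ML_H$ is weight-preserving and edge-elliptic. Its normal operator at each boundary point $(z_0,0)$ is the dilation- and $z$-translation-invariant model $N(\ML_{H_0})$ on $\RR^3_+$ recorded above.

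Second, the mapping behavior near $y=0$ is governed by the indicial roots. The preceding proposition identifies the indicial set as $\{-(n-1),\dots,-1,2,\dots,n\}$, and this set is independent of $\beta$ because the singular coefficients assemble into the $\mathfrak{sl}_2$-Casimir irrespective of $\beta$. The hypothesis $\mu\in(-1,2)$ places the weight strictly between the two consecutive indicial roots $-1$ and $2$, so the indicial operator $y^\lambda\mapsto(\lambda(\lambda-1)-\Delta_{\mathrm{Cas}})y^\lambda$ is invertible on the line $\mathrm{Re}\,\lambda=\mu$. More is needed for a parametrix: I would show that the full family $N(\ML_{H_0})$, after Fourier transform in $z$, is invertible on the $y^\mu$-weighted model spaces for every nonzero dual frequency, using that $N(\ML_{H_0})$ is a nonnegative Laplace-type operator whose sole obstruction to invertibility is the indicial set. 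Mazzeo's construction then yields left and right parametrices for $\ML_H$ near $y=0$ modulo operators that are compact on $\calX^k_{\mu,-\delta}$.

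Third, I would treat the end $y\to\infty$. There $\ML_H$ converges to the $y$-translation-invariant operator on the half-cylinder $\Si\times[L,\infty)$ obtained by linearizing at the limiting twisted flat connection, i.e.\ the oper boundary value. Since that limit is irreducible, the transverse operator on $\Si$ is strictly positive, with spectrum bounded below by some $c_0>0$; conjugating by $e^{-\delta y}$ shifts the continuous spectrum and renders the model invertible at infinity for all sufficiently small $\delta>0$, producing a cylindrical-end parametrix modulo a compact remainder. Patching the near-boundary parametrix, the cylindrical-end parametrix, and an interior pseudodifferential parametrix with a partition of unity gives a global parametrix $G$ with $G\ML_H-\mathrm{Id}$ and $\ML_H G-\mathrm{Id}$ compact on the stated spaces; Schauder estimates on the model and uniformly on the slabs $\Si\times[L,L+1]$ upgrade the mapping to the edge-H\"older spaces $\calC^{k,\alpha}_{\ie}$, and the Fredholm property follows.

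I expect the main obstacle to be the second step: proving that the normal operator is invertible across the \emph{entire} weight window $\mu\in(-1,2)$, rather than merely that $\mu$ avoids the indicial roots. Concretely, this amounts to ruling out any nontrivial kernel or cokernel of the $\RR^3_+$ model in the relevant $y^\mu L^2$-setting, and it is precisely here that the $\beta$-dependence must be shown to be harmless. The resolution is to observe that the twisted model operator, up to the $\beta$-independent Casimir potential, is conjugate to the untwisted model already analyzed in \cite{MazzeoWitten2013, HeMazzeo2018}, so the invertibility window and the compactness of the parametrix remainder carry over verbatim to the tilted case.
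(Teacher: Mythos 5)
Your proposal is correct in substance, but it takes a genuinely longer route than the paper, whose entire proof is a one-line reduction: $\ML_H$ agrees, up to a compact operator, with the linearized operator already analyzed in \cite[Theorem 5.8]{HeMazzeo2018}, so the Fredholm property transfers directly. The reason that reduction is legitimate is exactly the observation in your final paragraph: the tilted Nahm pole coefficients assemble into the $\beta$-independent $\mathfrak{sl}_2$ Casimir, so the normal operator, the indicial set $\{-(n-1),\dots,-1,2,\dots,n\}$, and hence the admissible weight window $(-1,2)$ coincide with the untwisted $t=1$ case, while all $\beta$-dependent corrections are lower-order with decaying coefficients and therefore compact between the spaces $\calX^{k+2}_{\mu,-\delta}$ and $\calX^{k}_{\mu-2,-\delta}$. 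You instead re-run the edge calculus of \cite{Mazzeo1991} from scratch: interior and edge ellipticity of $y^2\ML_H$, invertibility of the Fourier-transformed normal operator across the whole window between the consecutive indicial roots $-1$ and $2$, a translation-invariant parametrix on the cylindrical end using irreducibility of the limiting flat connection, and a patching argument. What this buys is a self-contained proof that makes explicit where each hypothesis enters; what it costs is re-proving machinery the citation already supplies, and in the end your crucial step (normal-operator invertibility on the full window) is itself discharged by appeal to \cite{MazzeoWitten2013, HeMazzeo2018}, so the logical dependence is the same as the paper's. One place where you are more careful than the stated theorem: at $y\to\infty$ the conjugated model $-(\pa_y-\delta)^2+L_{\Si}$ is invertible only when $\delta$ avoids the square roots of the spectrum of the limiting transverse operator, so the result as proved holds for sufficiently small $\delta>0$ (or $\delta$ off a discrete set), not literally for every $\delta>0$; this matches the conventions of the cited papers and is a harmless but real imprecision in the statement.
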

\begin{proof}
  The operator $\ML_H$ is the same as the operator considered in \cite[Theorem 5.8]{HeMazzeo2018} up to a compact operator, so
this result follows directly from that one.
\end{proof}
\end{subsection}

\end{section}
	
\begin{section}{Construction of approximate solutions}
Given an oper $(E,F_{\bullet},\na)$, we now construct an admissible Hermitian metric satisfying the tilted Nahm pole boundary conditions.

\begin{proposition}
  For every element $(E,F_{\bullet},\na)\in\MM_{\Op}$, there exists an Hermitian metric $H_0$ satisfying Nahm pole boundary conditions
  such that in unitary gauge relative to $H_0$, $\Omega_{H_0} = \MO(y^\infty)$.
\end{proposition}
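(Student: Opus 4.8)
The plan is to produce $H_0$ in two stages: first write down an explicit singular model metric adapted to the oper that already realizes the tilted Nahm pole singularity, and then correct it to infinite order at $y=0$ by a formal iteration whose solvability is governed by the indicial roots of $\ML_{H_0}$. Throughout I work in a parallel holomorphic gauge in which $\MD_1=\bar{\pa}$, $\MD_3=\pa_y$, and $\MD_2=\pa+\al$; because $(E,F_\bullet,\na)$ is an oper, the filtration $F_\bullet$ forces $\al$ into the shape \eqref{alformNahmpole}, with the nonzero superdiagonal constants $\sqrt{B_i}$ matching $\mfe^+$. As the initial metric I take $\HM=\exp(-\log(y\sin\beta)\,\mfe^0)$; by the computation following \eqref{unitarygaugeformula} the associated triple $(A,\phi,\phi_1)$ satisfies the tilted Nahm pole boundary conditions, so the first requirement holds automatically for $\HM$ and, more importantly, will persist under any correction that is subleading relative to the model.

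The next step is to estimate the initial error $\Omega_{\HM}$. Conjugating $\al$ by $\HM$ and using the $\slf_2$ relations among $\mfe^0,\mfe^{\pm}$, the principal $y^{-2}$ term of $\Omega$ cancels exactly, precisely because the superdiagonal of $\al$ reproduces $\mfe^+$ and $\HM$ is built from $\mfe^0$; what remains is contributed by the subleading entries of $\al$ (which are holomorphic in $z$ and $y$-independent) together with the $\Si$-geometry, and a direct expansion shows that $\Omega_{\HM}$ is polyhomogeneous at $y=0$ with leading exponent strictly greater than $-2$. This is the analogue in the present setting of the model-error estimate of \cite{MazzeoWitten2013, HeMazzeo2018}.

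The heart of the argument is the iterative correction. Writing $H=\HM e^s$ and using \eqref{quasilinearformequation}, we have $\Omega_H=\Omega_{\HM}+\gamma(-s)\ML_{\HM}s+Q(s)$ with $Q$ quadratically small. Suppose the current error has leading term $e_\nu(z)\,y^\nu$; I look for a correction $s_\nu=y^{\nu+2}u_\nu(z)$, the shift by $2$ reflecting that $\ML_{\HM}$ is second order and lowers the $y$-power by two. To leading order the equation $N(\ML_{\HM})s_\nu=-e_\nu y^\nu$ reduces, after decomposing $\isu(E,\HM)$ into irreducible $\slf_2$-representations under the principal embedding and resolving the $z$-dependence by an elliptic problem on $\Si$, to inverting $(\nu+2)(\nu+1)\,\mathrm{Id}-\Delta_{\mathrm{Cas}}$ on each isotypic summand. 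By the indicial-root computation above, this is invertible exactly when $\nu+2$ avoids the set $\{-(n-1),\dots,-1,2,\dots,n\}$. Since the exponents produced by the iteration form a discrete sequence increasing to $+\infty$, only finitely many steps are resonant; at those one admits a factor of $\log y$, the standard regular-singular remedy, which still yields a correction subleading relative to $\HM$. Each correction strictly raises the order of the error, so the scheme generates a formal polyhomogeneous series, and a Borel summation (asymptotic completeness) produces a genuine section $s$ with $|s|+|y\,ds|=\MO(y^\epsilon)$ realizing that expansion; then $H_0:=\HM e^s$ satisfies $\Omega_{H_0}=\MO(y^\infty)$ and, by the last Definition in Section \ref{Sec_KobayashiHitchinMap}, still obeys the tilted Nahm pole boundary conditions.

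I expect the main obstacle to be the bookkeeping at the resonances: one must track exactly which $\slf_2$-isotypic components of the error are excited at each order, confirm that collisions of $\nu+2$ with the indicial roots occur only finitely often (or are otherwise absorbed into a controlled polyhomogeneous expansion carrying $\log y$ terms), and verify at every resonant step that the logarithmically-corrected solution remains strictly subleading, so that the Nahm pole structure is not disturbed. Pinning down the precise leading exponent of $\Omega_{\HM}$, and hence the starting order of the induction, is the other computation that needs care.
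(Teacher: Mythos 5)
Your second stage -- the inductive correction by inverting the normal operator, with corrections $y^{\nu+2}u_\nu$ for an error at order $y^\nu$, solvability governed by whether $\nu+2$ avoids the indicial roots, $\log y$ insertions at the finitely many resonances, and a Borel sum -- is essentially the paper's iteration, and your power bookkeeping is consistent. The gap is in the first stage: the claim that for the bare model metric $\HM=\exp(-\log(y\sin\beta)\,\mfe^0)$ the error $\Omega_{\HM}$ has leading exponent strictly greater than $-2$, "precisely because of the $\slf_2$ relations." This is not a purely algebraic fact, and it fails on a curved surface. In the moment map \eqref{generalizaedcurvature} the term $-\sin^2\beta\,\frac{i}{2}\Lambda[\MD_2,\MD_2^{\da}]$ is contracted against the K\"ahler form of $g_0^2|dz|^2$ and therefore carries a factor $g_0^{-2}$, whereas $[\MD_3,\MD_3^{\da}]$ carries no such factor. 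With $\MD_2=\pa+\al$, $\al$ as in \eqref{alformNahmpole} with \emph{constant} superdiagonal $\sqrt{B_i}$, the two $y^{-2}$ contributions are $\pm\,\mfe^0/(g_0^2y^2)$ and $\mp\,\mfe^0/y^2$: they cancel identically only where $g_0\equiv 1$, which is impossible on a compact surface of genus $g\geq 2$. So $\Omega_{\HM}$ genuinely contains a term of order $y^{-2}$ proportional to $(g_0^{-2}-1)\mfe^0$. This is fatal for the scheme as you set it up: killing a $y^{-2}$ error forces a correction $s$ of order $y^0$, which violates $|s|+y|ds|\leq Cy^{\ep}$ (so the tilted Nahm pole structure is no longer guaranteed), and, worse, for a bounded $s$ the quadratic remainder $Q(s)$ in \eqref{quasilinearformequation} contains terms like $[\mfe^+/y,s]$ paired with $[\mfe^-/y,s]$, hence is itself $\MO(y^{-2})$, so the induction does not close. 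Removing the $y^{-2}$ error is not a linear-algebra step at all; it is equivalent to solving the Hitchin (uniformization) equation on $\Si$.

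That is exactly the ingredient the paper supplies before starting the iteration, and which your argument omits. Using Theorem \ref{ClassificationofOpers}, the oper is written globally as $\nabla_{\mathbf{q}}=\bar{\pa}+\pa^{\da_{h_{\mathbf{0}}}}+\vpq+\vpz^{\da_{h_{\mathbf{0}}}}$, where $h_{\mathbf{0}}$ is the harmonic metric of the uniformizing Higgs bundle $(\ME,\vpz)$, and the initial metric is the singular factor $\exp(-\log(y\sin\beta)\,\mfe^0)$ taken \emph{relative to} $h_{\mathbf{0}}$ (in the paper this appears as conjugating $\MD_{i,\cpx}$, which already contain $\pa^{\da_{h_{\mathbf{0}}}}$ and $\vpz^{\da_{h_{\mathbf{0}}}}$, by $g=\sqrt{H_0^{(0)}}$). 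Then the Hitchin equation \eqref{Hitchinequation} for $(\ME,\vpz)$, namely $F_{h_{\mathbf{0}}}+[\vpz,\vpz^{\da_{h_{\mathbf{0}}}}]=0$, is precisely what ties the curvature of $\Si$ to the $\slf_2$ data and cancels the $y^{-2}$ terms, while holomorphicity of $\vpz$ with respect to $(\bar{\pa},h_{\mathbf{0}})$ kills the $y^{-1}$ cross terms between $\mfe^+/y$ and the Chern connection of $h_{\mathbf{0}}$. What survives is $[\phi_z^{\mo},b^{\da}]+[b,(\phi_z^{\mo})^{\da}]=\MO(1)$, with $b=\MO(y)$ carrying the differentials $q_i$; from that starting point your iteration runs as described. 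Note also that normalizing the superdiagonal of $\al$ to the constants $\sqrt{B_i}$ globally is itself part of this same identification of the oper with the Hitchin-section data, so the appeal to Theorem \ref{ClassificationofOpers} (and with it the metric $h_{\mathbf{0}}$) cannot be bypassed.
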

\begin{proof}
By Theorem \ref{ClassificationofOpers}, given an oper $\na^{\beta}_{q}\in\MM_{\Op}$, we can write
  $$
  \nabla_{\mathbf{q}}^{\beta}:=\bar{\pa}+\pa^{\da_{h_0}}+\vpq+\vpz^{\da_{h_0}},
  $$ 
with respect to the trivialization $\ME=K^{-\frac{n-1}{2}}\oplus K^{-\frac{n}{2}+1}\oplus\cdots\oplus K^{\frac{n-1}{2}}$.
	
Now define $\MD_{1,\cpx}:=(\na^{\beta}_{q})^{0,1}=\bar{\pa}-\vpz^{\da_{h_0}},\;\MD_{2,\cpx}:=(\na^{\beta}_{q})^{1,0}=\pa^{\da_{h_0}}+\vpq$,
where the label "cpx" means we are working in a complex gauge.
	
Consider $H_0^{(0)}=\exp(-\log (y\sin \beta)\, \mfe_0)$ and $g=\sqrt{H_0^{(0)}}=\diag(\lam_1,\lam_2,\cdots,\lam_{n})$.
By the definition of $\mfe_0$, $\lam_i= {(y\sin\be)}^{-\frac{N}{2}+i-1}$ as an element of $\End(K^{-\frac{N}{2}+i-1},K^{-\frac{N}{2}+i-1})$.
Now set $\MD_{i,\app}:=g\MD_{i,\cpx} g^{-1}$ (where "app" indicates that this is an approximate solution).
We compute these operator explicitly.
	
We have $\MD_{1,\app}:=g\MD_{1,\cpx} g^{-1}=g^{-1}\bar{\pa}g+g^{-1}\vpz^{\da_{h_0}}g$. Since $g$ is a constant section,
$g^{-1}\circ \bar{\pa} \circ g=\bar{\pa}$.  By \cite{collier2014asymptotics}, $h_0$ is diagonal, so if $(\vpz^{\da_{h_0}})_{ij}$ denotes the $(i,j)$-component
of $\vpz^{\da_{h_0}}$, then $(\vpz^{\da_{h_0}})_{ij}=0$ for $j\neq i-1$. As $(g\vpz^{\da_{h_0}} g^{-1})_{ij}=\lam_i(\vpz^{\da_{h_0}})_{ij}\lam_j^{-1}$,
we have $g\vpz^{\da_{h_0}} g^{-1}\sim \MO(y)$.

Next, we compute $\MD_{2,\app}=g\MD_{2,\cpx} g^{-1}=\pa^{\da_{h_0}}+g\vpq g^{-1}$.  As before,
$g \vpq g^{-1}=(\lam_i \lam_j^{-1}\vp_{ij})$, where $\vpq=(\vp_{ij})$. We can decompose this as
$\MD_2=\pa_z+\phi_z^{\mathrm{mod}}+b$, where 
\begin{multline}
\phi_z^{\mathrm{mod}}={(y\sin\beta)}^{-1}
\begin{pmatrix}
0 & \sqrt{B_1} & 0 &\cdots& 0\\
0 & 0 & \sqrt{B_2} &\cdots& 0\\
\vdots &  &\ddots  & &\vdots\\
0 &  & &\ddots  &\sqrt{B_{n-1}}\\
0& 0 & \cdots & \cdots &0
\end{pmatrix} \mbox{and}\ \\
b=\begin{pmatrix}
0 & 0  & 0 &\cdots& 0\\
0 & 0 & 0 &\cdots& 0\\
\vdots &  &\ddots & &\vdots\\
0 &  & &\ddots  & 0\\
y^{n-1}q_{n}& y^{n-2}q_{n-1} & \cdots & yq_2 &0
\end{pmatrix}.\qquad 
\end{multline}
Since $b = \MO(y)$, then in the gauge defined by $g$, 
\[
\Omega_{H_0^{(0)}}=[\phi_z^{\mo},b^{\da}]+[b, (\phi_{z}^{\mo})^\da] =  \MO(1);
\]
even more specifically, the right hand side can be written $F_{H_0^{(0)}} + \MO(y)$. Clearly $\Omega_{H_0^{(0)}}$
depends continuously on $\mathbf{q}$.
	
We now add correction terms to make this error vanish to higher and higher order.  Indeed,
suppose that we have found a Hermitian metric $H_0^{(j)}$ such that $\Omega_{H_0^{(j)}} = F_jy^j + \MO(y^{j+1})$ for
some $j \geq 0$ (so $F_0 = F_{H_0^{(0)}}$ above), 
and define $H_0^{(j+1)} = H_0^{(j)} e^s$.  Using \eqref{quasilinearformequation}, we see that in order 
to show that $\Omega_{H_0^{(j+1)}} = F_{j+1}y^{j+1} + \MO(y^{j+2})$, it suffices to solve the equation 
\[
\gamma(-s)\ML_{H_0^{(j)}} s = -F_j y^j\ \ \mbox{modulo terms of order}\ y^{j+1 - \ep},
\]
But $\gamma(-s_j y^j) = \mathrm{Id} + \MO(y)$, and $\ML_{H_0^{(j)}}$ equals 
the normal operator $N(\ML_{H_0})$ to leading order, so we must solve
$N(\ML_{H_0}) s_j y^j = -F_j y^j$, where $s_j$ is an element of $i\su(E, H_0)$.  
This linear algebraic equation is solvable at least when $j$ is not an indicial root, and the solution
depends continuously on $\mathbf{q}$; in the exceptional cases where $j$ is an indicial root, 
one must replace $s_j y^j$ by $\tilde{s}_j y^j \log y$ to obtain a solution. 
(The possibility of these extra log factors is why the error has been written as $\MO(y^{j+1-\ep})$.)
In any case, we can carry out this inductive procedure and then take a Borel sum to obtain a Hermitian endomorphism 
\[
s \sim \sum_{j =0}^\infty s_{j\ell} \, y^j (\log y)^\ell
\]
(with $s_{0\ell} = 0$ for $\ell > 0$, and with only finitely many $s_{j \ell}$ nonzero for each $j$), 
such that if we set $H_0 = H_0^{(0)} e^s$, then $\Omega_{H_0} = \MO(y^N)$ for every $N \geq 0$.
\end{proof}

In summary, we obtain the
\begin{theorem}
\label{existadmissble1}
For any $(E,F_{\bullet},\na_{\mathbf{q}})\in\MM_{\Opers}^{\beta}$, there exists an admissible Hermitian metric $H_0$;
the approximate solution $\Omega_{H_0}(\mathbf{q})$ depends continuously on $\mathbf{q}$
\end{theorem}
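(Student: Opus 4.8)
The plan is to obtain admissibility directly from its three defining clauses. The preceding Proposition already produces, for each oper $\na_{\mathbf{q}}^{\beta}\in\MM_{\Opers}^{\beta}$, a Hermitian metric whose Chern connection satisfies the tilted Nahm pole boundary condition at $y=0$ and for which $\Omega_{H_0}=\MO(y^{\infty})$ there; this settles the first and third clauses in the definition of admissibility. What remains is to arrange convergence to the oper as $y\to\infty$ (the second clause) and to track the dependence on $\mathbf{q}$. I would supply the missing infinity behavior by grafting on the $y$-independent model coming from twisted Hitchin theory, and then glue.

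Since $g>1$, the holonomy of the oper $\na_{\mathbf{q}}^{\beta}$ is irreducible by Wentworth \cite{wentworth2014higgs}, so Theorem \ref{Thm_existenceofCstarHitchin} and Proposition \ref{Prop_uniqueness} produce a unique $w$-twisted Hitchin metric $H_{\flat}$ with $w=\tan\beta$, attached to the underlying flat connection exactly as in the boundary condition of Section \ref{Sec_boundaryconditioninfinity}. Viewed as a $y$-independent metric on $\Si\ti\RP$ with $\MD_3=\pa_y$ and $\phi_1=0$, this $H_{\flat}$ satisfies $\Lambda(\cos^2\beta[\MD_1,\MD_1^{\da}]-\sin^2\beta[\MD_2,\MD_2^{\da}])=0$ and $[\MD_3,\MD_3^{\da}]=0$, so $\Omega_{H_{\flat}}=0$ identically; its Chern connection is the flat twisted $\SL(n,\CC)$ connection of the oper, which realizes the required limit at infinity.

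To produce a single metric I would interpolate. On the region $\{y\ge y_0\}$, $y_0>0$, where both the Proposition's metric and $H_{\flat}$ are smooth and finite, write the former as $H_{\flat}e^{t}$ for a self-adjoint endomorphism $t$, and set $H_0:=H_{\flat}e^{\chi(y)t}$ there, where $\chi$ is a fixed cutoff with $\chi\equiv 1$ near $y_0$ and $\chi\equiv 0$ for $y\ge y_1$; for $y\le y_0$ keep $H_0$ equal to the Proposition's metric. For $y\le y_0$ this is unchanged, so the Nahm pole condition and $\Omega_{H_0}=\MO(y^{\infty})$ persist; for $y\ge y_1$ it equals $H_{\flat}$, so the limit at infinity holds and $\Omega_{H_0}=0$ there. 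The only new error is created on the compact slab $\{y_0\le y\le y_1\}$, and being compactly supported in $y$ it is harmless, lying in every weighted space $\calX^{k}_{\mu,-\delta}$ of the Fredholm theory.

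For the continuity statement, the near-$y=0$ ingredients depend continuously on $\mathbf{q}$ by construction, as recorded in the proof of the preceding Proposition, the cutoff $\chi$ is fixed, and $\na_{\mathbf{q}}^{\beta}$ varies continuously with $\mathbf{q}$. The substantive point, which I expect to be the main obstacle, is the continuous dependence of the infinity model $H_{\flat}(\mathbf{q})$. This follows from the strict convexity of the functional $\MM(H,K)$ in the proof of Proposition \ref{Prop_uniqueness}: the second variation is positive definite at the irreducible solution, so the linearized moment map on the compact surface $\Si$ is invertible, and the implicit function theorem applied to the $\mathbf{q}$-family of twisted Hitchin equations yields continuous (indeed smooth) dependence of $H_{\flat}$ on $\na_{\mathbf{q}}^{\beta}$, hence on $\mathbf{q}$. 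Assembling these, $H_0=H_0(\mathbf{q})$ and therefore $\Omega_{H_0}(\mathbf{q})$ depend continuously on $\mathbf{q}$, as claimed.
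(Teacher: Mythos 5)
Your proposal is correct. For the region near $y=0$ it rests on the same pillar as the paper: the preceding Proposition, which you quote rather than reprove, and which supplies the tilted Nahm pole behavior together with $\Omega_{H_0}=\MO(y^{\infty})$. The genuine difference is what you add on top. The paper's own proof of this theorem is essentially the summary sentence following that Proposition; it never spells out how the second admissibility clause (convergence of the Chern connection to the oper data as $y\to\infty$) is arranged --- the metric built there from $H_0^{(0)}=\exp(-\log(y\sin\beta)\,\mfe^0)$ does not by itself converge to the twisted harmonic metric at infinity --- nor why the model at infinity varies continuously with $\mathbf{q}$. You fill both gaps: you graft on the $y$-independent $\tan\beta$-twisted harmonic metric $H_{\flat}$, correctly observing that $\Omega_{H_{\flat}}=0$ identically (the term $[\MD_3,\MD_3^{\da_{H_{\flat}}}]$ vanishes because $H_{\flat}$ is $y$-independent, and the surface term vanishes by \eqref{CstarHitchinequation} with $w=\tan\beta$), and that the error created by the cutoff is smooth and compactly supported in $y$, hence lies in every weighted space $\calX^{k}_{\mu,-\delta}$ used in the Fredholm theory and the continuity method, which is all that is required of an approximate solution. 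Your continuity argument for $H_{\flat}(\mathbf{q})$ is also sound: the second-variation formula in the proof of Proposition \ref{Prop_uniqueness} shows the linearized twisted Hitchin operator has trivial kernel at an irreducible solution (irreducibility of oper holonomy holding since $g>1$), so by self-adjoint ellipticity on the compact surface it is invertible and the implicit function theorem applies. In short, the paper's version is terser and concentrates on the hard analytic step (the inductive, Borel-summed construction at $y=0$), while yours is a complete verification of all three admissibility clauses and of the continuity assertion, which is precisely what is invoked later in the properness argument of Proposition \ref{Prop_proper}.
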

\end{section}
	
\begin{section}{Continuity Method}
We now solve the extended Bogomolny equations with these boundary conditions using the standard method of continuity; this argument is close
to the one in \cite{HeMazzeo2018}, so our treatment is brief.
\begin{subsection}{Method of continuity}
Given an oper $(E,F_{\bullet},\na)\in\MM_{\Opers}^{\beta}$, fix an admissible approximate solution $H_0$ to the equation $\Omega_H=0$.
  	
Let $i\mathfrak{su}(E,H_0)$ be the subspace of Hermitian endormorphisms in $\End(E)$ preserving $H_0$. For any $s\in i\mathfrak{su}(E,H_0)$,
define the new Hermitian metric $H=H_0e^{s}$ and the family of maps
\begin{equation}
N_t(s):=\Ad(e^{\frac s2})\Omega_{H}+ts=0.
\label{eqmethodofContinuation}
\end{equation}
Note that $\Omega_H\in i\mathfrak{su}(E,H)$ and $\Ad(e^{\frac{s}{2}}):\isu(E,H)\to\isu(E,H_0)$ is a bundle isomorphism satisfying
\[
\langle \Ad(e^{\frac{s}{2}})f, \Ad(e^{\frac{s}{2}})g\rangle _{H}=\langle f,g\rangle _{H_0} \ \mbox{where}\ 
f,g \ \mbox{are sections of}\  i\su(E,H). 
\]
Define
\begin{equation}
I:=\{t\in[0,1]:N_t(s)=0 \ \ \mbox{has a solution} \ \ s\in \calX^{k+2}_{2-\ep, -\delta} \};
\label{setI}
\end{equation}
we show that $I$ is nonempty, open and closed, so that $I=[0,1]$, and the problem is solved.
\end{subsection}

\begin{subsection}{$I$ is nonempty}
\begin{proposition}
There exists an admissible Hermitian metric $H_0$ and section $s$ such that $N_1(s)=0$.
\end{proposition}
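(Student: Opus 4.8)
The plan is to fix an admissible metric $H_0$ supplied by Theorem \ref{existadmissble1} and to produce a solution of $N_1(s)=0$ by perturbing off $s=0$. The key observation is that the constant section is already an approximate solution: since $\Ad(e^0)=\mathrm{Id}$, we have $N_1(0)=\Omega_{H_0}=\MO(y^\infty)$, and because $H_0$ converges to the oper connection at infinity (the boundary condition of Section \ref{Sec_boundaryconditioninfinity}), $\Omega_{H_0}$ also decays exponentially as $y\to\infty$, so $\Omega_{H_0}$ is as small as desired in the relevant weighted norm. Everything then reduces to showing that the linearization $DN_1(0)$ is an isomorphism on $\calX^{k+2}_{2-\ep,-\delta}$, after which the implicit function theorem finishes the job. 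Note that $t=1$, rather than $t=0$, is the convenient endpoint precisely because the shift $+t\,\mathrm{Id}$ is what will make the linearization invertible.

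First I would compute the linearization. Differentiating \eqref{quasilinearformequation} at $s=0$ (where $\gamma(0)=\mathrm{Id}$ and $Q$ is quadratic), and including the derivative of the conjugation factor $\Ad(e^{s/2})$, gives
\[
DN_1(0)\sigma=\ML_{H_0}\sigma+\sigma+\tfrac12[\sigma,\Omega_{H_0}].
\]
Since $\Omega_{H_0}=\MO(y^\infty)$, the commutator term is a zeroth-order operator with rapidly decaying coefficient, hence a relatively compact perturbation, so it suffices to invert $\ML_{H_0}+\mathrm{Id}$. The Fredholm theorem \eqref{Fred1map} applies with $\mu=2-\ep\in(-1,2)$ and $\delta>0$; the zeroth-order shift does not disturb the indicial roots at $y=0$, so $\ML_{H_0}+\mathrm{Id}$ is Fredholm between the stated spaces, and its index agrees with that of $\ML_{H_0}$, which is zero throughout the range $(-1,2)$ because no indicial root lies in that interval and the operator is formally self-adjoint.

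Injectivity comes from positivity. By \eqref{MLH}, at the approximate solution
\[
\ML_{H_0}=\tfrac14\bigl(\cos^2\beta\,\na_1^{\st}\na_1+\sin^2\beta\,\na_2^{\st}\na_2\bigr)-\bigl(\MD_y^2+\tfrac{\phi_1^2}{\cos^2\beta}\bigr)+\tfrac12[\Omega_{H_0},\cdot].
\]
Each of $\na_i^{\st}\na_i$, of $-\MD_y^2=\MD_y^{\st}\MD_y$, and of $-\phi_1^2/\cos^2\beta$ (the square of the skew-adjoint operator $\ad_{\phi_1}$, divided by $\cos^2\beta$) is non-negative, while the remaining term is bounded by $\|\Omega_{H_0}\|_\infty\ll1$. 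Hence $\langle(\ML_{H_0}+\mathrm{Id})s,s\rangle\ge(1-\eta)\|s\|^2$ for small $\eta$. The weight $y^{2-\ep}$ at $y=0$ lies strictly above the top negative indicial root, so no boundary term appears in this integration by parts, and the $e^{-\delta y}$ weight removes the contribution at infinity; thus the kernel is trivial. Combined with index zero (and formal self-adjointness, which identifies the cokernel with a kernel on the reflected weight, again trivial by positivity), this makes $DN_1(0)$ an isomorphism.

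Finally I would write $N_1(s)=\Omega_{H_0}+DN_1(0)s+R(s)$, where $R(s)=\MO(\|s\|^2)$ is the quadratic remainder assembled from $Q(s)$, from $\gamma(-s)-\mathrm{Id}$, and from $\Ad(e^{s/2})-\mathrm{Id}$, and recast the equation as the fixed point $s=-DN_1(0)^{-1}(\Omega_{H_0}+R(s))$. Because $\|\Omega_{H_0}\|$ is arbitrarily small and $DN_1(0)^{-1}$ is bounded, the right-hand side is a contraction on a small ball in $\calX^{k+2}_{2-\ep,-\delta}$, yielding the desired $s$. I expect the genuine content to lie in the invertibility step rather than the contraction: one must verify that the positivity argument truly has no boundary contribution at the Nahm pole (this is exactly where the precise weight $2-\ep$ and the indicial root computation of the previous section are used), and one must estimate the nonlinear remainder $R(s)$ in the $\ie$-weighted H\"older norm, checking that products and the analytic functions $\gamma$ and $\Ad(e^{s/2})$ preserve these spaces with the required quadratic gain. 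Both are routine given the machinery of \cite{Mazzeo1991} and \cite{HeMazzeo2018}, but they carry the real weight of the proof.
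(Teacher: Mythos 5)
Your proof has a genuine gap at its very first step: the claim that $\Omega_{H_0}$ is ``as small as desired in the relevant weighted norm.'' Admissibility gives $\Omega_{H_0}=\MO(y^\infty)$ as $y\to 0$ and exponential decay as $y\to\infty$; these guarantee that $\|\Omega_{H_0}\|_{\calX^{k}_{\mu-2,-\delta}}$ is \emph{finite}, not that it is small. Nothing in the construction of Theorem \ref{existadmissble1} provides a tunable parameter with which to shrink this norm: $\Omega_{H_0}$ is a fixed section that may perfectly well be of size $O(1)$ in the interior region $y\sim 1$. Consequently the contraction/implicit-function argument cannot close: the IFT solves $N_1(s)=f$ only for $f$ in a neighborhood of $N_1(0)=\Omega_{H_0}$ whose radius is dictated by $\|DN_1(0)^{-1}\|$ and the quadratic remainder, and you have no way to ensure that $0$ lies in that neighborhood. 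The same unjustified smallness infects your invertibility step: the zeroth-order term $\tfrac12[\,\cdot\,,\Omega_{H_0}]$ has operator norm controlled by $\|\Omega_{H_0}\|_\infty$, which you need to be $\ll 1$ for the positivity of $\ML_{H_0}+\mathrm{Id}$ to survive, and that is again not known. This failure of direct perturbation at $t=1$ is exactly why the paper runs a continuity method over the whole interval $t\in[0,1]$ rather than perturbing off the approximate solution.

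The paper's own proof of this proposition is instead purely algebraic, and it exploits a freedom that your argument forgoes: the statement asserts the existence of \emph{both} $H_0$ and $s$, so one may choose the background metric to fit the solution. Following \cite{martin1995kobayashi}, take any admissible Hermitian metric $H_{-1}$, set $\kappa:=\Omega_{H_{-1}}$, and define $H_0:=H_{-1}e^{\kappa}$ and $s:=-\kappa$. Then $H_0e^{s}=H_{-1}$, so $\Omega_{H_0e^{s}}=\kappa$, and since $\Ad(e^{-\kappa/2})$ fixes $\kappa$ (because $[\kappa,\kappa]=0$), we get $N_1(s)=\Ad(e^{-\kappa/2})\kappa-\kappa=0$ exactly. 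Admissibility of $H_{-1}$ ensures that $\kappa$ vanishes to all orders at $y=0$ and decays exponentially as $y\to\infty$, so $H_0$ is again admissible and $s=-\kappa$ lies in $\calX^{k+2}_{2-\ep,-\delta}$. No Fredholm theory or fixed-point argument is needed for this step; all of the hard linear analysis is reserved for the openness and closedness parts of the continuity scheme.
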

\begin{proof}
  Following \cite{martin1995kobayashi}, pick up any admissible Hermitian metric $H_{-1}$, write $\kappa=\Omega_{H_{-1}}$ and 
  define $H_0=H_{-1}e^{\kappa}$. Then if we set $s=-\kappa$, we have that $N_1(-\kappa)
  = \Ad(e^{\frac{-\kappa}{2}})\Omega_{H_0e^{-\kappa}}-\kappa=\Omega_{H_{-1}}-\kappa=0$.
\end{proof}
\end{subsection}

\begin{subsection}{Openness}
  We next study the linearization more closely. Assume $s$ satisfies $N_t(s)=0$ for some $t$ and define 
  $$
  \ML_{t,s}(s'):=\frac{d}{du}|_{u=0}N_t(s+us').
  $$
Using the computations in \cite{HeMazzeo2018}, we have the
\begin{proposition}{\cite[Proposition 6.2, 6.4]{HeMazzeo2018}}
  \label{linearizationproperties}
Suppose that $N_t(s) = 0$. Then for any sections $s_1, s_2$ of $\calX^{k+2}_{\mu, -\delta}(\Si \ti \RP; \isu(E, H_0))$, we have
  $$
  \begin{array}{rll}
& i)  \qquad & \ML_{t,s}(s_1) =Ad(e^{\frac{s}{2}})\ML_Hs_1+ts_1; \\[0.5ex]
& ii)  & \int\langle \ML_{t,s} s_1, \Ad(e^{\frac{s}{2}})s_1\rangle _{H_0} =\int\sum_{i=1}^3|\MD_i^{\da_H}s_1|^2_{H}+t|\Ad(e^{\frac s4})s_1|^2_{H_0}; \\[0.5ex]
& iii)  & \int \lan \ML_{t,s}s_1, s_2\ran_{H_0} =\int\lan s_1,Ad(e^{\frac{s}{2}})\ML_{t,s}(Ad(e^{-\frac{s}{2}})s_2)\ran.
\end{array}
$$
(Note that part i) is purely algebraic and does not require any assumptions on the decay of $s_1$.)
\end{proposition}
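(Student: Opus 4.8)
The plan is to establish part i) first as a pointwise algebraic identity, and then to deduce ii) and iii) from it together with the self-adjointness structure of $\ML_H$ recorded in the Corollaries following \eqref{W3} and \eqref{MLH}.

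For i), I would differentiate $N_t(s+us_1)=\Ad(e^{(s+us_1)/2})\Omega_{H_0e^{s+us_1}}+t(s+us_1)$ in $u$ at $u=0$. The term $t(s+us_1)$ contributes $ts_1$. The product rule splits the rest into the variation of the conjugation factor $\Ad(e^{(s+us_1)/2})$ acting on $\Omega_H$, plus $\Ad(e^{s/2})$ acting on the variation of the moment map; both are handled using the exponential derivative formula \eqref{Exponentialderivative}. The essential point, and the reason i) holds even though $\Omega_H\neq 0$ along the continuity path, is that the contribution from conjugating $\Omega_H$ is exactly the commutator term $\tfrac12[\Omega_H,\cdot]$ appearing in $\ML_H$ in \eqref{MLH}; after this cancellation the remaining terms reassemble into $\Ad(e^{s/2})\ML_Hs_1$. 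This computation is entirely pointwise, so no decay hypothesis on $s_1$ is needed, exactly as in \cite[Proposition 6.2]{HeMazzeo2018}.

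For ii), I would insert i) and pair against $\Ad(e^{s/2})s_1$ in the $H_0$ inner product. Writing $\Ad(e^{s/2})=\Ad(e^{s/4})^2$ and using that $\Ad(e^{s/4})$ is $H_0$-self-adjoint (since $s\in\isu(E,H_0)$ is $H_0$-Hermitian), the term $t\lan s_1,\Ad(e^{s/2})s_1\ran_{H_0}$ becomes $t|\Ad(e^{s/4})s_1|^2_{H_0}$. The other term is $\lan\Ad(e^{s/2})\ML_Hs_1,\Ad(e^{s/2})s_1\ran_{H_0}$, which by the isometry property of $\Ad(e^{s/2})$ relating the $H_0$- and $H$-inner products equals $\lan\ML_Hs_1,s_1\ran_H$. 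Since the Corollary following \eqref{W3} writes $\ML_H$ as a sum of non-negative operators of the form $A^{\st}A$, namely $(\MD_i^{\da})^{\st}\MD_i^{\da}$ and $\MD_3\MD_3^{\da}=(\MD_3^{\da})^{\st}\MD_3^{\da}$, integration by parts turns $\int\lan\ML_Hs_1,s_1\ran_H$ into $\int\sum_{i=1}^3|\MD_i^{\da_H}s_1|^2_H$ (the coefficients $\tfrac12\cos^2\beta$, $\tfrac12\sin^2\beta$, $1$ being absorbed into the norm notation).

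For iii), I would polarize the identity in ii): written out, the claim asserts that $\ML_{t,s}$ and its conjugate $\Ad(e^{s/2})\ML_{t,s}\Ad(e^{-s/2})$ are formal $H_0$-adjoints of one another. Substituting $\ML_{t,s}=\Ad(e^{s/2})\ML_H+t$ and using the $H_0$-self-adjointness of $\Ad(e^{s/2})$ together with the isometry relating the two inner products, the $t$-terms match automatically and the statement reduces to the formal self-adjointness of $\ML_H$ in the $H$ inner product; this self-adjointness is immediate from the same expression of $\ML_H$ as a sum of terms $A^{\st}A$. The main obstacle is purely analytic and enters only in ii) and iii): one must justify the integrations by parts, i.e.\ show that no boundary contributions arise at $y=0$ or $y=\infty$. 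At infinity the exponential weight $e^{-\delta y}$ forces the necessary decay. At $y=0$ the key fact is that the tilted Nahm pole indicial roots computed above are $\{-(n-1),\dots,-1,2,\dots,n\}$, so the interval $(-1,2)$ containing the weight exponent $\mu$ is free of indicial roots; consequently sections of $\calX^{k+2}_{\mu,-\delta}(\Si\ti\RP;\isu(E,H_0))$ decay fast enough as $y\searrow 0$ that the pairings $\lan\MD_i^{\da}s_1,s_2\ran$ leave no boundary term. Making this decay precise, via the mapping properties of $\ML_H$ on the weighted H\"older spaces, is the only genuinely technical step; once it is in place the formal identities above are rigorous, and the proposition follows as in \cite[Propositions 6.2, 6.4]{HeMazzeo2018}.
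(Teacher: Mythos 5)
The paper never proves this proposition: it is imported, statement and all, from \cite[Propositions 6.2, 6.4]{HeMazzeo2018}, so your argument has to stand entirely on its own. It does not, and the gap is in part i), on which your derivations of ii) and iii) rest. Carrying out the differentiation you describe, using \eqref{Exponentialderivative} for both factors, gives
\begin{equation*}
\ML_{t,s}(s_1)=\bigl[\tfrac12\gamma(\tfrac s2)s_1,\ \Ad(e^{\frac s2})\Omega_H\bigr]
+\Ad(e^{\frac s2})\,\ML_H\bigl(\gamma(-s)s_1\bigr)+ts_1 .
\end{equation*}
Two features here are fatal to your sketch. First, the variation of the moment map carries the factor $\gamma(-s)=\frac{1-e^{-\ad_s}}{\ad_s}$, because the additive path $H_0e^{s+us_1}$ agrees only to first order with the multiplicative path $He^{u\gamma(-s)s_1}$; your sketch drops this factor silently. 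Second, your proposed cancellation is not what happens: the conjugation-variation term is not absorbed by the $\tfrac12[\Omega_H,\cdot]$ summand of \eqref{MLH}. Rather, using the hypothesis $N_t(s)=0$, i.e.\ $\Ad(e^{s/2})\Omega_H=-ts$, together with the identity $\ad_s\,\gamma(\tfrac s2)=2(\Ad(e^{s/2})-\mathrm{Id})$, that term collapses to $t(\Ad(e^{s/2})-\mathrm{Id})s_1$, so that altogether
\begin{equation*}
\ML_{t,s}(s_1)=\Ad(e^{\frac s2})\bigl(\ML_H(\gamma(-s)s_1)+ts_1\bigr).
\end{equation*}
Passing from this to the stated identity $\Ad(e^{s/2})\ML_Hs_1+ts_1$ would require $\ML_H\bigl((\gamma(-s)-\mathrm{Id})s_1\bigr)=t(\Ad(e^{-s/2})-\mathrm{Id})s_1$, an equality between a second-order differential expression in $s_1$ and a zeroth-order one; this is not a formal consequence of \eqref{MLH}, and it is exactly the computational content your proof needed to supply. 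Asserting the cancellation is assuming the proposition, not proving it.

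The gap propagates. With the formula above, the pairing in ii) becomes $\int\lan\ML_H(\gamma(-s)s_1),s_1\ran_H+t\|s_1\|_H^2$; since $\gamma(-s)$ need not commute with the operators $\MD_i^{\da_H}$, integration by parts no longer produces the manifestly non-negative sum $\int\sum_i|\MD_i^{\da_H}s_1|^2_H$, and the zeroth-order term comes out as $t|\Ad(e^{s/2})s_1|^2_{H_0}$ rather than the stated $t|\Ad(e^{s/4})s_1|^2_{H_0}$. So the positivity statement ii), which is what openness in the continuity method actually uses, cannot be recovered from your version of i) by the formal manipulations you describe; one must either prove i) as stated honestly or redo ii) with the $\gamma$-factors kept (for instance by pairing against $\Ad(e^{s/2})\gamma(-s)s_1$, for which positivity does follow since $\gamma(-s)$ is a positive function of the self-adjoint operator $\ad_s$). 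A smaller point: your justification of the vanishing boundary terms is also misstated. The absence of indicial roots in $(-1,2)$ yields Fredholmness of \eqref{Fred1map}, not decay of arbitrary elements of $\calX^{k+2}_{\mu,-\delta}$; the decay killing the boundary contribution at $y=0$ comes from the weight $\mu$ itself (the paper works at $\mu=2-\ep$), and, for kernel elements, from the expansion beginning at the first positive indicial root $2$.
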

	
\begin{proposition}
$\ML_{t,s}:  \calX^{k+2}_{\mu, -\delta}(\Si \ti \RP; \isu(E, H_0)) \longrightarrow \calX^k_{\mu-2,  -\delta}(\Si \ti \RP; \isu(E, H_0))$
is an isomorphism.
\end{proposition}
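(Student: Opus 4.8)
The plan is to show that $\ML_{t,s}$ is Fredholm of index zero and injective, so that triviality of the kernel forces triviality of the cokernel and hence bijectivity. All three inputs come from the Fredholm theorem of the previous section together with the three identities of Proposition \ref{linearizationproperties}. For the Fredholm property I would invoke part i), which gives $\ML_{t,s}=\Ad(e^{\frac{s}{2}})\ML_H+t\,\mathrm{Id}$. The bundle automorphism $\Ad(e^{\frac{s}{2}})$ is bounded with bounded inverse and maps the target to itself, so it preserves Fredholmness and index; the zeroth-order term $t\,\mathrm{Id}$ alters neither the indicial roots at $y=0$ (it is of lower order than the principal part) nor, for $\delta>0$ small, the invertibility of the normal operator as $y\to\infty$. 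Thus $\ML_{t,s}$ has the same edge structure as $\ML_H$ and is Fredholm on $\calX^{k+2}_{\mu,-\delta}\to\calX^k_{\mu-2,-\delta}$ for every $\mu\in(-1,2)$.

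Next I would compute the index. Because the indicial roots are $\{-(n-1),\dots,-1,2,\dots,n\}$, the open interval $(-1,2)$ contains no indicial root, so the dimensions of the kernel and cokernel of $\ML_{t,s}$, and hence its index, are independent of $\mu\in(-1,2)$. On the other hand, part iii) of Proposition \ref{linearizationproperties} is precisely the formal self-adjointness of $\ML_{t,s}$ with respect to the $H_0$-pairing; under the $L^2(dV)$ duality, for which the dual of weight $\nu$ is $-1-\nu$, this identifies the cokernel at base weight $\mu$ with the kernel at the reflected weight $1-\mu$. Since $\mu\in(-1,2)$ forces $1-\mu\in(-1,2)$ as well, and since the kernel dimension is constant on this interval, the kernel and cokernel dimensions coincide, so the index is zero.

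It then remains only to establish injectivity, which I would read off from part ii). If $\ML_{t,s}s_1=0$ with $s_1\in\calX^{k+2}_{\mu,-\delta}$, then pairing against $\Ad(e^{\frac{s}{2}})s_1$ and using ii) yields
\[
0=\int\sum_{i=1}^3|\MD_i^{\da_H}s_1|^2_H+t\,|\Ad(e^{\frac{s}{4}})s_1|^2_{H_0}.
\]
Here the membership $\mu\in(-1,2)$ together with the exponential decay $e^{-\delta y}$ is what guarantees that the integration by parts behind ii) carries no boundary contribution at $y=0$ or $y\to\infty$. For $t>0$ the second term alone forces $s_1=0$. For $t=0$ one obtains $\MD_i^{\da_H}s_1=0$ for $i=1,2,3$; since $s_1$ is a Hermitian endomorphism, the adjoint relation $(\MD_i s_1)^{\da_H}=\MD_i^{\da_H}s_1$ gives $\MD_i s_1=0$ as well, so $s_1$ is trace-free and parallel for the flat connection defined by the commuting operators $\MD_i$. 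Because the induced $\Si$-connection at each slice is gauge-equivalent to the limiting oper, which is irreducible (cf.\ the proposition of \cite{wentworth2014higgs}), Schur's lemma forces $s_1$ to be a scalar, and tracelessness gives $s_1=0$. Combined with Fredholmness and index zero, injectivity yields the isomorphism.

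The main obstacle is the borderline case $t=0$: one must both justify the Green's identity in the weighted H\"older spaces so that no boundary term at the tilted Nahm pole is silently discarded, and correctly leverage the \emph{asymptotic} irreducibility of the oper on the non-compact end to pass from $\MD_i^{\da_H}s_1=0$ to $s_1=0$. For $t>0$ the positive term makes injectivity immediate, so the essential content lies entirely in controlling this limiting case and confirming that $\mu$ lies in the range for which the identity of part ii) holds without correction.
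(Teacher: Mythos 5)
Your argument is correct, but the half of it that produces surjectivity runs along a genuinely different track from the paper's. The injectivity step is the same: pair $\ML_{t,s}s_1$ against $\Ad(e^{\frac{s}{2}})s_1$ and invoke part ii) of Proposition \ref{linearizationproperties}. For the cokernel, however, the paper is more economical: by part iii), any $s_2$ orthogonal to the range satisfies $\int\langle s_1,\Ad(e^{\frac{s}{2}})\ML_{t,s}(\Ad(e^{-\frac{s}{2}})s_2)\rangle=0$ for all $s_1$, so $\Ad(e^{-\frac{s}{2}})s_2$ lies in the kernel and hence $s_2=0$; the range is therefore dense, and since the operator is Fredholm its range is closed, so it is onto. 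You instead prove the index is zero, using Fredholmness inherited through part i), formal self-adjointness from part iii), the $L^2$ weight reflection $\mu\mapsto 1-\mu$, and the absence of indicial roots in $(-1,2)$. Your bookkeeping is right ($1-\mu\in(-1,2)$ exactly when $\mu\in(-1,2)$), and index zero plus injectivity does finish the proof; what the paper's density argument buys is that none of this edge-calculus duality needs to be made precise, while what your route buys is the sharper conclusion $\mathrm{ind}\,\ML_{t,s}=0$, which is of independent use (e.g.\ for deformation theory). Separately, you are right to flag $t=0$: the paper's one-line kernel argument (``$\Ad(e^{\frac{s}{4}})s=0$ hence $s=0$'') uses the term $t|\Ad(e^{\frac{s}{4}})s_1|^2$ and so tacitly assumes $t>0$, and your Schur-lemma/irreducibility patch is the correct fix in the degenerate case. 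Note, though, that the continuity scheme never actually needs $t=0$ here: since $1\in I$, openness on $(0,1]$ together with closedness of $I$ already forces $I=[0,1]$, so the case you labor over can be bypassed entirely.
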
 
\begin{proof}
If $s\in \Ker\;\ML_{t,s}$, then by ii) of this last Proposition, $\Ad(e^{\frac{s}{4}})s=0$ and hence $s=0$. Part iii) shows that
the range of $\ML_{t,s}$ is dense. Since this operator is Fredholm, its range is closed, and hence it is an isomorphism.
\end{proof}
	
\begin{proposition}
\label{Iopen}
$I$ is open. 
\end{proposition}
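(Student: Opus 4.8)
The plan is to deduce openness directly from the implicit function theorem in Banach spaces, with the isomorphism property of the linearization supplied by the immediately preceding proposition. Fix $t_0 \in I$ together with a solution $s_0 \in \calX^{k+2}_{2-\ep,-\delta}$ of $N_{t_0}(s_0)=0$. I regard
$$
N: [0,1] \ti \calX^{k+2}_{2-\ep,-\delta}(\Si \ti \RP; \isu(E,H_0)) \longrightarrow \calX^k_{-\ep,-\delta}(\Si \ti \RP; \isu(E,H_0)),
\qquad (t,s)\mapsto \Ad(e^{s/2})\Omega_{H_0 e^s} + ts,
$$
as a smooth map of Banach spaces. The first step is to verify that this map is well-defined and $C^1$. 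Since the dependence on $t$ is affine, this reduces to checking that the $t$-independent nonlinear part $\Ad(e^{s/2})\Omega_{H_0 e^s}$ sends $\calX^{k+2}_{2-\ep,-\delta}$ continuously and differentiably into $\calX^k_{-\ep,-\delta}$. By the quasilinear expansion \eqref{quasilinearformequation} this term is built from the analytic functions $\gamma(-s)$, $e^{\pm s}$ and the conjugation $\Ad(e^{s/2})$ applied to $s$ and its $\MD_i$-derivatives; because $2-\ep>0$ and $\delta>0$, elements of $\calX^{k+2}_{2-\ep,-\delta}$ are bounded with all weighted derivatives and decay as $y\to\infty$, so these weighted H\"older spaces behave as a Banach algebra under the relevant products and are preserved by composition with entire functions. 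This is precisely the analytic framework already used in the quasilinear expansion above, and the verification is identical to that in \cite{HeMazzeo2018}.

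The second step is to identify the partial Fr\'echet derivative of $N$ in the $s$-direction at $(t_0,s_0)$. By definition this is the operator $\ML_{t_0,s_0}$, which by Proposition \ref{linearizationproperties}(i) equals $s'\mapsto \Ad(e^{s_0/2})\ML_H s' + t_0 s'$. Under the standing hypotheses $\mu=2-\ep\in(-1,2)$ and $\delta>0$, the immediately preceding proposition shows that
$$
\ML_{t_0,s_0}: \calX^{k+2}_{2-\ep,-\delta} \longrightarrow \calX^k_{-\ep,-\delta}
$$
is an isomorphism of Banach spaces. The hypotheses of the implicit function theorem are therefore satisfied at $(t_0,s_0)$, yielding $\eta>0$ and a $C^1$ map $t\mapsto s(t)\in\calX^{k+2}_{2-\ep,-\delta}$ defined on $(t_0-\eta,t_0+\eta)\cap[0,1]$ with $s(t_0)=s_0$ and $N_t(s(t))=0$ throughout. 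Hence every such $t$ lies in $I$, so $I$ is open.

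I expect the only substantive point to be the first step, namely confirming the $C^1$ Banach-space mapping properties of $N$ on the weighted H\"older spaces: in particular that the weight $\mu=2-\ep$ lies in the admissible window $(-1,2)$ in which the linearization is Fredholm and invertible, and that the nonlinear terms genuinely land in the target weight $-\ep$ rather than a worse one. Once these mapping properties are in hand, the remainder is a direct application of the implicit function theorem, the isomorphism hypothesis being provided by the preceding proposition.
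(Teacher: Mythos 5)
Your proof is correct and follows essentially the same route as the paper: the paper also observes that $N_t$ is a well-defined smooth map between the weighted H\"older spaces $\calX^{k+2}_{\mu,-\delta}\to\calX^{k}_{\mu-2,-\delta}$, invokes the preceding proposition for the isomorphism property of the linearization $\ML_{t,s}$, and concludes by the implicit function theorem. Your added attention to the weight bookkeeping (that $\mu=2-\ep$ lies in the Fredholm window $(-1,2)$ and the nonlinearity lands in weight $\mu-2$) is a useful elaboration of what the paper leaves implicit, but it is the same argument.
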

\begin{proof}
The nonlinear map
$$
N_t: \calX^{k+2}_{\mu, -\delta}(\Si \ti \RP; \isu(E, H_0)) \longrightarrow \calX^k_{\mu-2,  -\delta}(\Si \ti \RP; \isu(E, H_0)),
$$
is well-defined and smooth, and it linearization $\ML_{t,s}$ at $s$ is an isomorphism. The statement now follows from
the implicit function theorem. 
\end{proof}
\end{subsection}

\begin{subsection}{A priori estimates and closeness}
  To prove that $I$ is closed, we must show that if $H_j$ is a sequence of solutions corresponding to $t_j \in I$, and if $t_j \to \bar{t}$,
  then $H_j$ also has a limit.  The analytic steps are essentially the same as \cite{HeMazzeo2018} except for the initial step, which is
  the $\calC^0$ estimate, so we concentrate on this. 
	
  Denote by $\MC^{k,\al}_D(\Si \ti \RP)$ the space of sections which are uniformly in $\calC^{k,\alpha}$ on every strip $\si \ti [t, t+1]$,
  and which also vanish at $y=0$ (the subscript `D' stands for Dirichlet), and also set $\MC^{k,\al}_{D, -\delta} = e^{-y\delta}\calC^{k,\al}_D$.
  Fix $\chi \in \calC^{\infty}( \Si \ti \RP)$ with $\chi \geq 0$, $\chi(y) = 1$ for $y \geq 2$ and $\chi(y) = 0$ for $y \leq 1$.
  
\begin{proposition}{\cite[Proposition 8.1]{HeMazzeo2018}}
\label{scalarlaplacian}
If $\Delta$ is the scalar Laplacian, then
\begin{equation*}
\begin{split}
\Delta: \calC^{k+2,\al}_{D, -\delta}(\Si \ti \RP) \oplus \RR & \longrightarrow \calC^{k,\al}_{-\delta} (\Si \ti \RP) \\
(u,A) & \longmapsto \Delta u + A\Delta (\chi)
\end{split}
\end{equation*}
is an isomorphism.
\end{proposition}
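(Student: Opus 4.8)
The plan is to treat this as a Fredholm problem on the manifold with cylindrical end $\Si\ti\RP$, to exhibit the one-dimensional cokernel of the scalar Laplacian $\Delta=\Delta_\Si-\pa_y^2$ explicitly, and then to verify that the extra parameter $A\in\RR$ coupled to the fixed source $\Delta\chi$ is exactly what restores surjectivity. By the mapping theory of \cite{Mazzeo1991} (the framework already invoked for \eqref{Fred1map}), $\Delta$ is bounded and Fredholm as a map $\calC^{k+2,\al}_{D,-\delta}(\Si\ti\RP)\to\calC^{k,\al}_{-\delta}(\Si\ti\RP)$ provided $0<\delta<\sqrt{\mu_1}$, where $0=\mu_0<\mu_1\le\cdots$ are the eigenvalues of $\Delta_\Si$: the only noncompact end is $y\to\infty$, the weight $e^{-\delta y}$ is subcritical there, and the Dirichlet condition at $y=0$ is a regular elliptic boundary condition. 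I would first record that $\Ker\Delta=0$: integrating by parts against $u$ and using $u|_{y=0}=0$ together with the exponential decay at $y\to\infty$ kills all boundary terms, so $\int|\na u|^2=0$ and hence $u\equiv 0$.

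Next I would identify the cokernel by decomposing into eigenmodes of $\Delta_\Si$. On each eigenmode with $\mu_k>0$ the reduced operator $\mu_k-\pa_y^2$ on $\RP$, with Dirichlet at $0$ and decay at $\infty$, is invertible: its decaying homogeneous solution is $e^{-\sqrt{\mu_k}\,y}$, and since $\delta<\sqrt{\mu_1}\le\sqrt{\mu_k}$ the growing solution is excluded and no obstruction arises. Thus the entire cokernel comes from the constant mode on $\Si$. Writing $\bar f(y):=\Vol(\Si)^{-1}\int_\Si f$ for the $\Si$-average, the reduced equation there is $-\bar u''=\bar f$ with $\bar u(0)=0$ and $\bar u\in e^{-\delta y}\calC$. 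A direct double integration shows that the unique solution compatible with exponential decay is forced to satisfy $\bar u(0)=-\int_0^\infty s\,\bar f(s)\,ds$, so the Dirichlet condition can be met if and only if $\ell_0(f):=\int_0^\infty s\,\bar f(s)\,ds=0$. Consistently with the double indicial root of $\pa_y^2$ at infinity and the choice $\delta>0$, this is exactly a one-dimensional constraint, so $\mathrm{Range}(\Delta)$ is closed of codimension one and is annihilated by the bounded functional $\ell_0$; the necessity $\ell_0(\Delta u)=0$ is the same integration by parts, using $\bar u(0)=0$ and decay.

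Finally I would check that $\Delta\chi$ escapes this range. Because $\chi$ depends only on $y$, $\Delta\chi=-\chi''$ lies in the constant mode, and integrating by parts gives $\ell_0(\Delta\chi)=\int_0^\infty s(-\chi''(s))\,ds=[\chi]_0^\infty=1\neq 0$, using that $\chi'$ is supported in $[1,2]$ and $\chi(\infty)-\chi(0)=1$. Hence $\Delta\chi\notin\mathrm{Range}(\Delta)=\Ker\ell_0$, and since the range has codimension one, $\mathrm{Range}(\Delta)\oplus\RR\cdot\Delta\chi=\calC^{k,\al}_{-\delta}$; this is the surjectivity of $(u,A)\mapsto\Delta u+A\Delta\chi$. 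For injectivity, if $\Delta u+A\Delta\chi=0$ then averaging over $\Si$ yields $-(\bar u+A\chi)''=0$, so $\bar u+A\chi$ is affine in $y$; decay of $\bar u$ and $\chi(\infty)=1$ force this affine function to be the constant $A$, while $\bar u(0)=\chi(0)=0$ force it to vanish at $y=0$, whence $A=0$ and then $\Delta u=0$, so $u=0$ by the kernel computation. Boundedness together with bijectivity give the isomorphism by the open mapping theorem. The one genuinely delicate point is the cokernel computation in the constant mode---equivalently, recognizing that taking the weight $\delta$ strictly positive but below $\sqrt{\mu_1}$ produces precisely a one-dimensional obstruction localized at $y\to\infty$---and it is exactly this obstruction that the single scalar $A$ and the fixed source $\Delta\chi$ are engineered to cancel.
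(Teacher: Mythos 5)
Your proof is correct, and since this paper does not prove the proposition itself (it is quoted from \cite[Proposition 8.1]{HeMazzeo2018}), your argument supplies exactly the standard reasoning behind it: the sole obstruction to solving $\Delta u = f$ with Dirichlet data at $y=0$ and exponential decay lives in the constant Fourier mode on $\Si$, where decay forces $\bar u(0) = -\int_0^\infty s\,\bar f(s)\,ds$, so the range is closed of codimension one and is annihilated by $\ell_0$, while the computation $\ell_0(\Delta\chi) = \chi(\infty)-\chi(0) = 1$ shows the auxiliary parameter $A$ spans a complement; injectivity plus the open mapping theorem then give the isomorphism. The only point to dress up in a fully detailed write-up is the claim that the cokernel is \emph{exactly} one-dimensional, i.e.\ that mode-by-mode solvability (for $\mu_k>0$, valid since $\delta<\sqrt{\mu_1}$) assembles into a solution in the weighted H\"older space---a standard consequence of the Fredholm theory you already invoke together with elliptic estimates.
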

	
We now obtain a $\MC^0$ estimate, cf.\ \cite{HeMazzeo2018}:
\begin{proposition}  If $s$ is a Hermitian endomorphism satisfying $N_t(s) = 0$, then there exist a constant $C$
depending only on $H_0$ such that 
\begin{equation}
|s|_{\calC^0(\Si\ti\RP)}\leq C.
\end{equation}
\label{C0estimate}
\end{proposition}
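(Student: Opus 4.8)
The plan is to turn the equation $N_t(s)=0$ into a scalar differential inequality for $|s|$ and then bound $|s|$ by comparison against a fixed supersolution manufactured from the approximate solution via Proposition \ref{scalarlaplacian}. The decisive first observation is purely algebraic: since $s$ commutes with itself, $\Ad(e^{\pm s/2})s=s$, so $\Ad(e^{\frac s2})\Omega_H+ts=0$ forces $\Omega_H=-ts$ as an endomorphism. Consequently $\langle \Omega_H,s\rangle_{H_0}=-t|s|^2_{H_0}\le 0$, and this sign is what drives the estimate.

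First I would substitute this into the Bochner-type identity \eqref{Keyequation2} and discard the two manifestly nonnegative terms $t|s|^2$ and $|v(s)\na s|^2$, obtaining the pointwise inequality
\[
\Delta |s|^2 \le \langle \Omega_H-\Omega_{H_0},s\rangle_{H_0} = -t|s|^2 - \langle \Omega_{H_0}, s\rangle_{H_0}\le |\Omega_{H_0}|_{H_0}\,|s|,
\]
where $\Delta=\Delta_\Si-\pa_y^2$ is the nonnegative scalar Laplacian, so that $\Delta\Phi\le 0$ means $\Phi$ is subharmonic. The crucial gain is that the $t$-dependence has vanished from the right-hand side: it is controlled entirely by the fixed datum $\Omega_{H_0}$, which by admissibility vanishes to infinite order at $y=0$ and decays exponentially as $y\to\infty$, so $|\Omega_{H_0}|_{H_0}\in\calC^{k,\al}_{-\delta}$.

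Next I would build the barrier. Since $|\Omega_{H_0}|_{H_0}$ lies in the target space of the isomorphism of Proposition \ref{scalarlaplacian}, there is a bounded $w=u+A\chi$ with $u\in\calC^{k+2,\al}_{D,-\delta}$ and $A\in\RR$ solving $\Delta w=|\Omega_{H_0}|_{H_0}$; the minimum principle for the nonnegative right-hand side shows $w\ge 0$, and $\|w\|_{\infty}\le C\,\||\Omega_{H_0}|_{H_0}\|$ with $C$ depending only on $H_0$. Setting $M=\sup|s|$ and $\Phi=|s|^2-Mw$, the displayed inequality gives $\Delta\Phi\le |\Omega_{H_0}|_{H_0}(|s|-M)\le 0$, so $\Phi$ is subharmonic. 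Because $s\in\calX^{k+2}_{2-\ep,-\delta}$, we have $|s|\to 0$ both as $y\to 0$ (at rate $y^{2-\ep}$) and as $y\to\infty$ (exponentially), while $w$ stays bounded, so $\Phi\le 0$ along both boundary faces. The maximum principle then forces $\Phi\le 0$ throughout; evaluating at a point where $|s|=M$ gives $M^2\le M\|w\|_\infty$, hence $M\le C\,\||\Omega_{H_0}|_{H_0}\|$, a bound depending only on $H_0$.

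I expect the main obstacle to be exactly this uniformity in $t$, intertwined with the noncompactness and the degeneration of $\Delta$ at $y=0$. A naive evaluation of \eqref{Keyequation2} at an interior maximum of $|s|$ only yields $|s|\le t^{-1}\sup|\Omega_{H_0}|_{H_0}$, which is worthless as $t\to 0$; the barrier argument above circumvents this by absorbing the linear-in-$|s|$ right-hand side into the comparison function with coefficient $M=\sup|s|$. Making the comparison rigorous requires a global supersolution with the correct asymptotics at both ends — vanishing (or at least bounded) at the Nahm pole face and bounded at infinity — which is precisely what the $\oplus\,\RR$ summand and the cutoff $\chi$ in Proposition \ref{scalarlaplacian} supply; the delicate points are verifying the sign $A\ge 0$ and the boundary decay of $|s|$ needed to close the maximum principle at the singular face $y=0$.
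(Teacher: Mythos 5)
Your proposal is correct and follows essentially the same route as the paper: both use the identity \eqref{Keyequation2} together with $\Omega_H=-ts$ (coming from $N_t(s)=0$ and $\Ad(e^{s/2})s=s$) to get the $t$-independent inequality $\Delta|s|^2\leq M|\Omega_{H_0}|$, then build a comparison function from Proposition \ref{scalarlaplacian} and conclude by the maximum principle using the decay of $s$ and $u$ at $y=0$ and $y\to\infty$. The only cosmetic differences are that you make the algebraic step $\Omega_H=-ts$ explicit and flag the sign of the constant $A$, which the paper glosses over (and which in any case is harmless, since even without it one gets $M\leq \sup|u|+2|A|$).
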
 
\begin{proof}
Taking the inner product of \eqref{quasilinearformequation} with $s$, where $H=H_0e^s$, gives
\begin{equation}
\Delta|s|^2+|v(s)\na s|^2+t|s|^2+\langle \Omega_{H_0},s\rangle =0;
\label{ausefulformofmethodofcontinouationequation}
\end{equation}
here $\Delta= -\del_y^2 + \Delta_{\Sigma}$ and $|v(s)\na s|$ is as in \eqref{vsdefinition}. Let $M:=\sup |s|$, then 
\[
\Delta |s|^2+t|s|^2\leq -\langle \Omega_{H_0},s\rangle  \Longrightarrow  \Delta|s|^2\leq M|\Omega_{H_0}|.
\]
By Proposition \ref{scalarlaplacian}, there exists $u \in \calC^{2,\alpha}_{D, -\delta}$ and $A \in \RR$ such that
$\Delta (u-A\chi)=|\Omega_{H_0}|$, hence $\Delta(|s|^2-Mu+AM\chi)\leq 0$, i.e., $|s|^2 - Mu + AM$ is a subsolution.
Since both $s$ and $u$ decay as $y \to \infty$ and vanish at $y=0$, and $\chi$ is bounded, we conclude that $|s|^2 -Mu + AM\chi \leq 0$.
This gives that $|s|^2 \leq M^2 \leq  M \sup (|u| + |A|)$, which gives the desired bound since $u$ and $A$ depend only on $\Omega_{H_0}$. 
\end{proof}
	
\begin{theorem}
\label{interiorckbound}
Let $N_t(s) = 0$, and suppose that $H_0$ satisfies the tilted Nahm pole boundary condition. Then for any $k \in \mathbb N$ and $\al\in(0,1)$, 
\[
[s]_{y^{2}\MC_0^{k,\al}}\leq C,
\]
where C depends only on $k,\al$ and $\Omega_{H_0}$, but not on $t$. 
\end{theorem}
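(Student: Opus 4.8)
The plan is to combine the scalar $\MC^0$ bound from Proposition \ref{C0estimate} with a dilation-based rescaling argument and the indicial-root gap established above. The structure is a two-stage bootstrap: first upgrade the uniform $L^\infty$ control of $s$ to uniform control of the full edge- (``$0$-'') H\"older norm, and then improve the crude decay rate $\MO(1)$ to the sharp rate $\MO(y^2)$ dictated by the first positive indicial root. The key structural point throughout is that the zeroth-order term $ts$ in $N_t$ is nonnegative and is subordinate to the $y^{-2}$-singular part of $\ML_{H_0}$ near $y=0$, so it neither shifts the indicial roots nor spoils the maximum principle; this is what allows all constants to be taken uniform in $t\in[0,1]$.

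First I would record the pointwise consequence of $N_t(s)=0$: pairing \eqref{quasilinearformequation} with $s$ and using \eqref{Keyequation2} yields \eqref{ausefulformofmethodofcontinouationequation}, namely $\Delta|s|^2+|v(s)\na s|^2+t|s|^2+\langle\Omega_{H_0},s\rangle=0$. This already gives the $\MC^0$ bound and, since $|v(s)\na s|^2\geq0$ and $t\geq0$, furnishes integral control of the $0$-gradient of $s$. Next I would carry out the rescaling step: fix a base point $(z_0,y_0)$ with $y_0\leq1$ and pass to the dilated variables $\wt z=(z-z_0)/y_0$, $\wt y=y/y_0$. In these coordinates $\ML_{H_0}$ converges to its dilation-invariant normal operator $N(\ML_{H_0})$, and $N_t(s)=0$ becomes, to leading order, a fixed uniformly elliptic quasilinear system on a unit ball whose inhomogeneity consists of $\Omega_{H_0}=\MO(y_0^\infty)$ together with the lower-order terms $ts$ and $Q(s)$. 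Because $|s|\leq C$ is scale invariant, interior Schauder estimates for this rescaled system bound the derivatives $\partial_{\wt z}^{\,k}\partial_{\wt y}^{\,k}s$ uniformly in $y_0$; translating back replaces each $\partial_{\wt z},\partial_{\wt y}$ by $y\partial_z,y\partial_y$, which is exactly the statement that $s$ lies in $\MC_0^{k,\al}$ with a norm controlled by $\Omega_{H_0}$ and independent of $t$.

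The final and most delicate step is to improve the decay from $\MO(1)$ to $\MO(y^2)$. Here I would invoke the indicial computation of the previous section: the indicial roots of $\ML_{H_0}$ are $\{-(n-1),\dots,-1,2,\dots,n\}$, so the interval $(-1,2)$ containing the trivial rate $0$ is free of roots. A bounded solution of $\ML_{H_0}s=f$ with $f=\MO(y^\infty)$ therefore cannot carry an expansion term with exponent in $(-1,2)$, and the edge parametrix of \cite{Mazzeo1991} (or, more elementarily, a barrier comparison against $y^{2-\ep}$) forces $s=\MO(y^{2-\ep})$ for every $\ep>0$, and then a clean homogeneous-improvement step at the root $2$ places $s\in y^2\MC_0^{k,\al}$. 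The nonlinear contributions $Q(s)$ and the factor $\gamma(-s)$ are handled perturbatively: each is quadratic in $s$ and its $0$-derivatives, so once the $\MC_0^{k,\al}$ bound of the previous step is in hand they only accelerate the decay and do not disturb the indicial analysis.

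I expect the main obstacle to be precisely this decay improvement carried out with constants uniform in $t$. The delicate point is that the parametrix/barrier argument must cross the indicial gap $(-1,2)$ while the operator carries the extra term $ts$ with coefficient ranging over $[0,1]$; one must verify that, being subordinate to the singular part of $\ML_{H_0}$, this term leaves the indicial roots unchanged and does not degrade either the comparison function or the constant. Since $t\geq0$ the term only strengthens the maximum principle, so the uniformity should go through, but establishing the requisite perturbation estimates for the quasilinear remainder $Q(s)$ in the weighted $0$-H\"older norms, and confirming that the jump to the exceptional root $2$ produces no $\log y$ correction, is where the real work lies.
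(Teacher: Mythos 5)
Your skeleton (the $\MC^0$ bound of Proposition \ref{C0estimate}, scaled interior elliptic estimates giving uniform control of $0$-derivatives, then an indicial-root argument for the weight) is the same route the paper intends, since its own ``proof'' is a deferral to \cite{HeMazzeo2018}; the first two stages are essentially right, granted that the rescaled Schauder step really runs through the energy identity \eqref{ausefulformofmethodofcontinouationequation} (quadratic gradient terms preclude bare Schauder from $L^\infty$ alone, but the divergence/positivity structure is exactly what makes the standard Hermitian--Yang--Mills regularity scheme work). The genuine gap is in the decay stage. You assert that once the $\MC_0^{k,\al}$ bound is in hand the equation may be read as $\ML_{H_0}s=f$ with $f=\MO(y^\infty)$, the nonlinearities ``only accelerating the decay.'' This is false at the starting rate: by \eqref{quasilinearformequation} the actual equation is $\gamma(-s)\ML_{H_0}s+Q(s)=-\Omega_{H_0}-\Ad(e^{-s/2})(ts)$, and against the Nahm-pole background each $\MD_i$ costs a factor $y^{-1}$ (e.g.\ $\MD_3 s\sim \pa_y s+\tfrac{1}{2y}[\mfe^0,s]$). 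Hence for $s$ merely bounded with bounded $0$-derivatives, both $Q(s)$ and $(\gamma(-s)-1)\ML_{H_0}s$ are only $\MO(y^{-2})$ --- the same order as the singular part of $\ML_{H_0}$ itself, not a perturbation. The indicial-gap argument cannot be launched from boundedness; $Q(s)=\MO(y^{2\eta-2})$ becomes perturbative only after a uniform initial rate $|s|\le Cy^{\eta}$, $\eta>0$, is established. That initial rate must come from the sign structure of \eqref{ausefulformofmethodofcontinouationequation}, $\Delta|s|^2+|v(s)\na s|^2+t|s|^2=-\lan\Omega_{H_0},s\ran$, using $|v(s)\na s|^2\ge 0$ together with the $y^{-2}$ Casimir positivity hidden in the $\MD_3$ (and $\MD_1,\MD_2$) contributions of \eqref{vsdefinition}, plus admissibility $\Omega_{H_0}=\MO(y^\infty)$. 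Note in particular that your ``more elementary'' alternative --- a barrier comparison against $y^{2-\ep}$ for the plain Laplacian --- fails outright: with the paper's convention $\Delta y^{a}=-a(a-1)y^{a-2}$, so $y^a$ has the wrong sign to serve as a supersolution for any $a>1$; without retaining the $y^{-2}$ potential, scalar barriers cap the rate far below what is needed. The correct argument is two-step: maximum-principle initial rate first, nonlinear indicial bootstrap second; your write-up collapses these into a linear shortcut that does not start.

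A secondary point: your closing claim that the jump to the indicial root $2$ ``produces no $\log y$ correction'' is flagged but never addressed, and it is not innocuous --- the paper's own Theorem \ref{fullestimate} retreats to the rate $y^{\kappa'}$ for $\kappa'<\kappa=2$, so whatever reading one gives the weight in $[s]_{y^{2}\MC_0^{k,\al}}$, the exact-rate/log issue is precisely the part that cannot be waved through. As written, your argument (once the decay stage is repaired as above) proves the $\kappa'<2$ version, not the clean $y^2$ statement.
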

The decay estimate is exactly the same as in \cite{HeMazzeo2018}, and leads to
\begin{proposition}
Assuming that $||s||_{L^\infty} + ||e^{-\delta y}\Omega_{H_0}||_{\calC^k}\leq C_k$ for any $k \geq 0$,  then for all $k$, 
$||e^{-\delta y}s||_{\calC^k}\leq C_k'$. 
\end{proposition}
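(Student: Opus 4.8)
The plan is to adapt the exponential-decay argument of \cite{HeMazzeo2018}, whose engine is the spectral gap of the linearized operator at $y=\infty$: because the limiting configuration is the fixed \emph{irreducible} twisted flat connection of Section~\ref{Sec_boundaryconditioninfinity} (independent of the continuity parameter $t$), the transverse Laplacian on $\Si$ has no kernel, and this gap is what forces $s$ to decay. The two hypotheses are available uniformly in $t$: $\|s\|_{L^\infty}\le C$ is Proposition~\ref{C0estimate}, while the $\calC^k$ control of $e^{-\delta y}\Omega_{H_0}$ is a property of the fixed admissible metric $H_0$ and its exponential convergence to the oper. First I would obtain $L^2(\Si)$-decay of $s$ on each slice, and then bootstrap to the full $\calC^k$ statement using the interior Schauder bounds of Theorem~\ref{interiorckbound}.

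For the first step I would integrate the pointwise identity \eqref{ausefulformofmethodofcontinouationequation} over the slice $\Si\times\{y\}$. Writing $f(y):=\int_{\Si}|s|^2$ and using $\int_{\Si}\Delta_{\Si}|s|^2=0$, the identity becomes $f''(y)=\int_{\Si}|v(s)\na s|^2+t f(y)+\int_{\Si}\lan\Omega_{H_0},s\ran$. Here the nonlinearity has been absorbed into the manifestly nonnegative gradient expression $|v(s)\na s|^2$ of \eqref{vsdefinition}, which is exactly the feature I want to exploit: since $|s|\le M$ forces a uniform lower bound $v(s)\ge c(M)>0$, the $\Si$-derivative part of $|v(s)\na s|^2$ controls from below a multiple of $\int_{\Si}\lan(\na_1^{\st}\na_1+\na_2^{\st}\na_2)s,s\ran$, the coupled Laplacian attached to $\MD_1,\MD_2$ on the adjoint bundle. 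By irreducibility this operator admits no parallel traceless endomorphism (Schur's lemma), so for all $y\ge Y_0$ it has a gap $\ge\lambda_0>0$; consequently $\int_{\Si}|v(s)\na s|^2\ge \mu^2 f$ with $\mu^2:=c(M)\lambda_0$. The threshold $Y_0$ and the constant $\mu$ depend only on $M$ and on the rate at which $H_0$ approaches the oper at infinity, hence are independent of $t$.

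I would then convert the resulting differential inequality into pointwise decay by a barrier. Dropping the nonnegative term $t f$ and estimating $|\int_{\Si}\lan\Omega_{H_0},s\ran|\le\|\Omega_{H_0}\|_{L^2(\Si)}\sqrt f\le C e^{-\delta y}\sqrt f$, Young's inequality yields $f''\ge \tfrac12\mu^2 f-C'e^{-2\delta y}$ on $[Y_0,\infty)$. Provided the weight satisfies $\delta<\mu/(2\sqrt2)$ (which is arranged in the choice of function space, $\delta$ lying below the gap), the function $\psi:=Ae^{-2\delta y}$ is, for $A$ large, a supersolution of the coercive operator $-\pa_y^2+\tfrac12\mu^2$ dominating the forcing $C'e^{-2\delta y}$; comparing $f$ with $\psi$ on $[Y_0,\infty)$—using $f(Y_0)\le |\Si|M^2$ to fix $A$ at the left endpoint and the qualitative decay $s\in\calX_{2-\ep,-\delta}$ to control the endpoint at infinity—the maximum principle for $-\pa_y^2+\tfrac12\mu^2$ gives $f(y)\le Ae^{-2\delta y}$, i.e.\ $\|s\|_{L^2(\Si\times\{y\})}\lesssim e^{-\delta y}$ with constant uniform in $t$.

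Finally I would upgrade this slicewise $L^2$-decay to the claimed $\calC^k$ bound. On each unit slab $\Si\times[L,L+1]$ with $L\ge Y_0$, Theorem~\ref{interiorckbound} supplies interior $\calC^{k,\al}$ estimates with constants uniform in $L$ and $t$; combined with the equation and the weighted $L^2$-decay just established, this gives $\|s\|_{\calC^{k,\al}(\Si\times[L,L+1])}\lesssim e^{-\delta L}$, while on the fixed region $\{y\le Y_0\}$ the bound follows directly from Proposition~\ref{C0estimate} and Theorem~\ref{interiorckbound}. Summing over the slabs produces $\|e^{-\delta y}s\|_{\calC^k}\le C_k'$. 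The main obstacle throughout is \emph{uniformity in $t$}: it is essential that the gap $\lambda_0$, the threshold $Y_0$, and the Schauder constants be $t$-independent, which is why the argument hinges on the limiting data at infinity—and hence the transverse operator whose kernel must be shown trivial—depending only on the fixed oper and not on $t$; a subsidiary point is verifying that the prescribed weight $\delta$ genuinely lies below the gap so that the barrier comparison closes.
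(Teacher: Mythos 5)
Your proposal is correct and is essentially the argument the paper relies on: the paper gives no independent proof of this proposition, saying only that ``the decay estimate is exactly the same as in \cite{HeMazzeo2018},'' and the scheme you reconstruct --- a slicewise spectral gap for the transverse operator on $\Si$ coming from irreducibility of the limiting flat connection, an ODE barrier comparison for $f(y)=\int_{\Si}|s|^2$ derived from \eqref{ausefulformofmethodofcontinouationequation}, followed by a Schauder bootstrap on unit slabs via Theorem \ref{interiorckbound} --- is precisely that argument, with the nonlinearity handled correctly through the lower bound $v(s)\geq c(M)>0$. Your flagged caveat that the weight $\delta$ must lie below the spectral gap is indeed a standing (if implicit) assumption in the paper's choice of function spaces rather than a gap in your proof.
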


\begin{theorem}
\label{fullestimate}
Suppose that $N_t(s) = 0$ and $H_0$ has a Nahm pole but no knot singularities. Let $\kappa$ be the first positive 
indicial root of $\ML_{H_0}$.  Then for all $k \in \mathbb N$, there is an a priori estimate 
\[
[s]_{\calX^k_{\kappa', \delta}}\leq C
\]
for any $0 < \kappa' < \kappa$, where $C$ depends on $k,l,\al$ and $\Omega_{H_0}$, but not on $t$. 
\end{theorem}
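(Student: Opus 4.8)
The plan is to combine the estimates already in hand --- the uniform $\calC^0$ bound of Proposition \ref{C0estimate}, the interior Schauder bounds of Theorem \ref{interiorckbound}, and the exponential decay at infinity --- and to supplement them with the one missing ingredient: a uniform bound on the vanishing rate of $s$ at the Nahm pole boundary $y=0$. All three quoted estimates have constants depending only on $\Omega_{H_0}$ (hence on $\mathbf{q}$) and not on $t$, so the entire content of the theorem is the uniform boundary regularity, namely that $|s|\lesssim y^{\kappa'}$ together with the matching weighted derivative bounds, where $\kappa$ is the first positive indicial root, equal to $2$ by the Proposition computing the indicial set $\{-(n-1),\dots,-1,2,\dots,n\}$.

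First I would record the pointwise inequality obtained by pairing \eqref{ausefulformofmethodofcontinouationequation} with $s$, which yields $\Delta|s|^2\le |\Omega_{H_0}|\,|s| = \MO(y^N)$ for every $N$, since $\Omega_{H_0}=\MO(y^\infty)$ at an admissible metric. A scalar barrier of the form $y^{a}$ with $0<a<1$ is then a supersolution for $\Delta=-\pa_y^2+\Delta_\Si$, so the maximum principle used in Proposition \ref{C0estimate} already gives $|s|\lesssim y^{a}$ for every $a<1$, uniformly in $t$ because the $\calC^0$ bound is. This is the easy half; the crux is to push the rate from just below $1$ up to just below $\kappa=2$, where the scalar maximum principle is genuinely insufficient, since for $a>1$ the function $y^{a}$ is a subsolution and cannot serve as an upper barrier.

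To reach the sharp rate I would use the matrix indicial structure. Rewrite $N_t(s)=0$ by means of \eqref{quasilinearformequation} as a perturbed linear equation $(\ML_{H_0}+t)\,s = g$, where the forcing $g$ collects $-\Omega_{H_0}$ together with the quadratic term $Q(s)$ and the factors $\gamma(-s)$ and $\Ad(e^{\pm s/2})$; by Theorem \ref{interiorckbound} these nonlinear contributions carry extra powers of $y$ relative to $s$. The zeroth order term $t\,s$ does not alter the indicial roots, since these are fixed by the $y^{-2}$-homogeneous part of the normal operator $N(\ML_{H_0})=\Delta_{\RR^3}-\Delta_{\mathrm{Cas}}$. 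Because the indicial set contains no root in the open interval $(-1,2)$ --- in particular none at $0$ or $1$, reflecting that the adjoint representation of $\slf_n$ has no trivial summand under the principal $\slf_2$ --- the weighted theory of \cite{Mazzeo1991} improves any conormal solution of weight $>-1$ up to any weight $\kappa'<2$. Concretely, I would bootstrap the weight across $(-1,2)$: starting from the rate $a<1$ above, the isomorphism property of $\ML_{t,s}$ on $\calX^{k+2}_{\mu,-\delta}\to\calX^{k}_{\mu-2,-\delta}$ together with the improved decay of $g$ places $s$ in successively higher weighted spaces, and the strict inequality $\kappa'<\kappa$ is exactly what absorbs the possible $y^{2}\log y$ resonance at the first positive indicial root.

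The decisive point for uniformity in $t$ is that this mechanism does not degenerate as $t\to 0$: by Proposition \ref{linearizationproperties}(ii) the operator $\ML_{t,s}$ is coercive with lower bound $\int\sum_i|\MD_i^{\da_H}s_1|^2$, which is strictly positive for $s_1\neq 0$ precisely because the underlying flat connection is irreducible, so the weighted invertibility and its constants survive the limit $t=0$. The main obstacle I anticipate is the bookkeeping of the nonlinear forcing $g$ near $y=0$: one must verify that $Q(s)$ and the $\gamma(-s)$ factors, controlled only through the interior weighted Hölder norms, genuinely produce a forcing whose weight exceeds that of $s$ by the amount needed to drive each step of the bootstrap, uniformly in $t\in[0,1]$. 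Once the boundary rate is established, the interior Schauder estimates and the weighted mapping theory upgrade it to all orders $k$, yielding $[s]_{\calX^k_{\kappa',\delta}}\le C$ with $C$ independent of $t$.
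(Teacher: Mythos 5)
Your proposal is correct and follows essentially the same route as the paper, which (deferring the analytic details to \cite{HeMazzeo2018}) obtains Theorem \ref{fullestimate} by combining the $t$-independent $\calC^0$ bound, the interior scale-invariant Schauder estimates, and the decay at infinity with a boundary bootstrap resting on the absence of indicial roots of $\ML_{H_0}$ in $(-1,2)$. The only slip is cosmetic: the maximum-principle barrier controls $|s|^2$ rather than $|s|$, so the initial vanishing rate is $y^{a/2}$ for $a<1$ rather than $y^{a}$; this is harmless, since the quadratic structure of $Q(s)$ doubles the rate at each step of the bootstrap and any positive starting rate suffices to reach every $\kappa'<\kappa=2$.
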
	

An immediate corollary is 
\begin{corollary}
	$I$ is closed in $[0,1]$. 
\end{corollary}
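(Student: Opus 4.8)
The plan is to run the standard compactness argument for the continuity method, using crucially that the a priori estimates established in the previous subsection hold with constants independent of $t$. Suppose $t_j \in I$ with $t_j \to \bar t$, and let $s_j \in \calX^{k+2}_{2-\ep,-\delta}(\Si\ti\RP;\isu(E,H_0))$ be corresponding solutions of $N_{t_j}(s_j)=0$, cf.\ \eqref{eqmethodofContinuation}. I must produce a solution of $N_{\bar t}(s)=0$ lying in the same weighted space, which is exactly the assertion $\bar t \in I$ from the definition \eqref{setI}.

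First I would collect the uniform bounds. Proposition \ref{C0estimate} gives $\|s_j\|_{\calC^0}\le C$ with $C$ independent of $t$; Theorem \ref{interiorckbound} then supplies uniform interior $\calC^{k,\al}$ control, and the decay estimate leading to Theorem \ref{fullestimate} upgrades these to $[s_j]_{\calX^{k}_{\kappa',-\delta}}\le C_k$ for every $k$, again with constants independent of $t_j$. Here $\kappa'$ may be taken to be any number strictly less than the first positive indicial root, which by the computation of the indicial roots of $\ML_{H_0}$ equals $\kappa=2$. I would fix $\kappa' = 2-\tfrac{\ep}{2}$, so that the decay rate forced at $y=0$ is strictly better than the rate $2-\ep$ demanded in the definition of $I$.

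The heart of the argument is the compactness step. I would invoke the compact embedding of the weighted edge-H\"older spaces adapted to the degeneration at $y=0$: a uniform bound in $\calX^{k+3}_{2-\ep/2,-\delta}$ yields precompactness in $\calX^{k+2}_{2-\ep,-\delta}$. The gain comes from two sources, namely one extra order of regularity and a strictly larger polynomial weight at $y=0$ (the sliver $\tfrac{\ep}{2}$, available precisely because $\kappa'<\kappa=2$ strictly), together with the exponential factor $e^{-\delta y}$ supplying the decay needed to control the noncompact end $y\to\infty$. An Arzel\`a--Ascoli argument adapted to the edge structure then extracts a subsequence with $s_j \to s_\infty$ in $\calX^{k+2}_{2-\ep,-\delta}$.

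Finally I would pass to the limit. Since $N_t$ is well-defined and smooth as a map between the relevant spaces (cf.\ the proof of Proposition \ref{Iopen}) and depends affinely, hence continuously, on $t$, the relations $t_j\to\bar t$ and $s_j\to s_\infty$ let me pass $N_{t_j}(s_j)=0$ to $N_{\bar t}(s_\infty)=0$. Thus $\bar t\in I$ and $I$ is closed. The only genuine obstacle is the compactness step; it is routine here precisely because the estimates cited hold with constants independent of $t$, and because the strict inequality $\kappa'<\kappa$ among the indicial roots leaves room to trade a bit of weight and regularity for compactness.
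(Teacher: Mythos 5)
Your proposal follows exactly the paper's route: the paper presents closedness as an immediate corollary of the uniform-in-$t$ a priori estimates (Proposition \ref{C0estimate} and Theorems \ref{interiorckbound} and \ref{fullestimate}), which is precisely the chain of bounds you assemble, and your final limiting step is the standard one the paper leaves implicit. One technical claim needs repair, though. Your compactness assertion --- that a uniform bound in $\calX^{k+3}_{2-\ep/2,-\delta}$ gives precompactness in $\calX^{k+2}_{2-\ep,-\delta}$ --- is false as stated, because source and target carry the \emph{same} exponential weight $e^{-\delta y}$: on the noncompact end there is no gain, and translating bumps $u_j = e^{-\delta y}\chi(y-j)$ are uniformly bounded in the stronger norm yet have no subsequence converging in the weaker one. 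The gain in polynomial weight at $y=0$ and in H\"older regularity buys compactness only on finite pieces $\Si\times(0,L]$. The standard patch is to extract, by Arzel\`a--Ascoli and a diagonal argument, a subsequence converging in $C^{k+2}_{\mathrm{loc}}$ (equivalently, in $\calX^{k+2}_{2-\ep,-\delta'}$ for any $0<\delta'<\delta$, where the exponential weight strictly relaxes), pass to the limit in $N_{t_j}(s_j)=0$ locally, and then note that the limit $s_\infty$ inherits the uniform bound $[s_\infty]_{\calX^{k}_{\kappa',-\delta}}\leq C$ from the $s_j$, so it lies in the space demanded by \eqref{setI}. With this correction your argument is complete and coincides with the paper's.
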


\begin{theorem}
	\label{Thm_surjective}
The maps $I_{\Opers}^{\beta}:\MM^{\beta}_{\mathrm{TBE}}\to\MM^{\beta}_{\Op}$ is surjective.
\end{theorem}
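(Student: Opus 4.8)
The plan is to push the continuity method of the previous subsections to $t=0$ and then confirm that the resulting metric yields a bona fide element of $\MM^{\beta}_{\mathrm{TBE}}$ whose image under $I_{\Opers}^{\beta}$ is the prescribed oper. Fix an arbitrary class $(E,F_{\bullet},\na_{\mathbf{q}})\in\MM^{\beta}_{\Opers}$. By Theorem \ref{existadmissble1} there is an admissible Hermitian metric $H_0$; this fixes the holomorphic data $\Theta=(\MD_1,\MD_2,\MD_3)$ once and for all, and we seek $H=H_0e^s$ solving $\Omega_H=0$.

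First I would assemble the three topological properties of the set $I$ of \eqref{setI}: it is nonempty by the proposition producing a solution at $t=1$, open by Proposition \ref{Iopen}, and closed by the corollary above, which rests on the a priori estimates of Theorem \ref{fullestimate}. Since $[0,1]$ is connected, $I=[0,1]$, so in particular $0\in I$ and there exists $s\in\calX^{k+2}_{2-\ep,-\delta}$ with $N_0(s)=\Ad(e^{s/2})\Omega_H=0$. As $\Ad(e^{s/2})$ is a bundle isomorphism, this forces $\Omega_H=0$, so $H=H_0e^s$ solves the real moment map equation; since the holomorphic data already satisfy the commutator relations $[\MD_i,\MD_j]=0$, the Chern connection of $(E,\Theta,H)$ yields a triple $(A,\phi,\phi_1)$ solving the full system \eqref{TEBE}.

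Next I would verify the boundary behavior needed to place $(A,\phi,\phi_1)$ in $\MM^{\beta}_{\mathrm{TBE}}$. The a priori estimate of Theorem \ref{fullestimate} gives decay $|s|+|y\,ds|\leq Cy^{\kappa'}$ near $y=0$ for any $0<\kappa'<\kappa$, where $\kappa=2$ is the first positive indicial root determined in the proposition computing the indicial spectrum. This is precisely the perturbation tolerated by the tilted Nahm pole boundary condition, so since $H_0$ already satisfies that condition, so does $H$. The exponential weight $e^{-\delta y}$ forces $s\to 0$ as $y\to\infty$, whence $H\to H_0$; as $H_0$ converges to the flat $w$-twisted $\SL(n,\CC)$ connection of the oper with $\phi_1\to 0$, the solution has the required asymptotics at infinity, and the map $I_{\Opers}^{\beta}$ is well defined on it by Proposition \ref{INP}.

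Finally I would confirm that $I_{\Opers}^{\beta}$ returns the class we started from. The continuity method alters only the Hermitian metric and never the holomorphic data $(E,\Theta)$; the oper structure recovered earlier from the singular boundary condition is built from the filtration by $\MD_3$-vanishing rates together with the off-diagonal entries of $\MD_2$, all of which are functions of $(E,\Theta)$ alone. Hence the induced oper coincides with $(E,F_{\bullet},\na_{\mathbf{q}})$, and surjectivity follows. The genuine obstacle in this argument is the closedness of $I$, i.e.\ the uniform control of $s$ up to the Nahm pole boundary encoded in Theorem \ref{fullestimate}; once that input is granted, the remaining steps are bookkeeping about boundary asymptotics and about the invariance of the holomorphic data under the flow.
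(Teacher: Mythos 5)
Your proposal is correct and follows essentially the same route as the paper: the paper establishes surjectivity exactly by this continuity method — admissible approximate metric from Theorem \ref{existadmissble1}, then $I$ nonempty, open (Proposition \ref{Iopen}), and closed via the a priori estimates of Theorem \ref{fullestimate}, so $I=[0,1]$ and the solution at $t=0$ gives $\Omega_H=0$. Your additional bookkeeping — that $\Ad(e^{s/2})$ being invertible forces $\Omega_H=0$, that the decay rate $y^{\kappa'}$ with $\kappa'<2$ is within the tolerance of the tilted Nahm pole condition, and that the holomorphic data (hence the induced oper) are untouched by changing only the metric — is precisely what the paper leaves implicit, so the two arguments coincide.
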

\end{subsection}
\end{section}





	

\begin{section}{Uniqueness and Properness}
\subsection{Uniqueness}
Uniqueness of the solution is proved using convexity of the Donaldson functional.  For any two Hermitian metrics $K$ and $H=Ke^{s}$,
with $\Tr(s)=0$, write 
\begin{equation}
\Omega_{H,K}:=\frac{i}{2}\Lambda(\cos^2 \beta[\MD_1,\MD_1^{\da}]-\sin^2\beta[\MD_2,\MD_2^{\da}])+[\MD_3,\MD_3^{\da}],
\end{equation} 
where $\MD_i^{\da}$ is the conjugate with respect to $H$ defined in Section 2; the subscript $K$ emphasizes that when $K$ is fixed,
$\Omega_{H,K}=0$ is an equation for $s$. 
	
Define a Donaldson functional for the twisted Bogomolny equations in analogy with the well-known Donaldson functional for the
Hermitian-Yang-Mills equations \cite{donaldson1985anti, Donaldson1987Infinite, Simpson1988Construction}:
\begin{equation}
\MM(H,K)=\int_0^1\int_{\Sigma\ti\RP}\langle s,\Omega(Ke^{us},K)\rangle _K\ \omega\we dy \we du, 
\end{equation}
where $\omega$ is the volume form of $\Si$.  This functional reveals the variational structure for the \EBE. Indeed, 
setting $H_t=Ke^{ts}$, 
\begin{equation}
\begin{split}
\frac{d}{dt}\MM(H_t,K)=&\int_{\Sigma\ti\RP}\Tr(\Omega_{H_t,K}s)\omega\we dy, \\
\frac{d^2}{dt^2}\MM(H_t,K)=&\int_{\Si\ti\RP}\cos^2\beta|\MD_1s|^2+\sin^2\beta|\MD_2s|^2+|\MD_3 s|^2\\
&+\int_{\Si\ti\RP} \cos^2\beta\bar{\partial}\Tr(\MD_1^{\da}s\we s)+\sin^2\beta\pa\Tr(\MD_2^{\da}s\we s) +\partial_y\Tr(\MD_3^{\da}s\we s).
\label{Donaldsonfunctionalvariation}
\end{split}
\end{equation}
	
We now use this to prove injectivity of the maps $I_{\mathrm{Opers}}^{\beta}$ 
\begin{proposition}
	\label{Prop_injective}
Given any element in $\MOP^{\beta}$, suppose $H,K$ are two solutions to the twisted Bogomolny equations 
with the same singularity type and corresponding to this same set of holomorphic data. Then $H=K$.
\end{proposition}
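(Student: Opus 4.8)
The plan is to exploit the convexity of the Donaldson functional $\MM(H,K)$ together with the fact that both endpoints of the natural path of metrics are critical points. Write $H = Ke^s$ with $\Tr(s)=0$, set $H_t = Ke^{ts}$, and consider $f(t):=\MM(H_t,K)$. Because $K$ solves the twisted Bogomolny equations we have $\Omega_{K,K}=0$, and because $H$ does as well we have $\Omega_{H,K}=\Omega_H=0$. Hence by the first variation formula in \eqref{Donaldsonfunctionalvariation},
\[
f'(0)=\int_{\Si\ti\RP}\Tr(\Omega_{K,K}\,s)\,\omega\we dy=0,\qquad f'(1)=\int_{\Si\ti\RP}\Tr(\Omega_{H,K}\,s)\,\omega\we dy=0.
\]

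The crucial point is to show that the boundary contributions in the second variation \eqref{Donaldsonfunctionalvariation} vanish, so that
\[
f''(t)=\int_{\Si\ti\RP}\cos^2\beta|\MD_1s|^2+\sin^2\beta|\MD_2s|^2+|\MD_3s|^2\ \geq\ 0.
\]
The remaining terms are total derivatives. Integrating $\bar{\pa}\Tr(\MD_1^{\da}s\we s)$ and $\pa\Tr(\MD_2^{\da}s\we s)$ over the closed surface $\Si$ gives zero, while $\pa_y\Tr(\MD_3^{\da}s\we s)$ integrates in $y$ to the difference of boundary contributions at $y=0$ and $y=\infty$. At infinity $s$ decays exponentially, since both solutions converge to the same flat $w$-twisted connection, so that contribution vanishes. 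At $y=0$, the fact that $H$ and $K$ have the same singularity type places $s$ in a weighted space $\calX^k_{\kappa',\delta}$ with $0<\kappa'<\kappa$, where $\kappa=2$ is the first positive indicial root; near the Nahm pole $\MD_3^{\da}\sim\pa_y-\frac{1}{2y}\ad_{\mfe^0}$, so $\MD_3^{\da}s=\MO(y^{\kappa'-1})$ and the integrand $\Tr(\MD_3^{\da}s\we s)=\MO(y^{2\kappa'-1})\to 0$ as $y\searrow 0$.

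With the boundary terms eliminated, $f''\geq 0$, so $f'$ is nondecreasing on $[0,1]$; combined with $f'(0)=f'(1)=0$ this forces $f'\equiv 0$, hence $f''\equiv 0$. The vanishing of the nonnegative bulk integrand then gives $\MD_1 s=\MD_2 s=\MD_3 s=0$, i.e.\ $s$ is covariantly constant for the full system of operators. Finally I invoke irreducibility: the holonomy representation of an oper is irreducible (the cited result of Wentworth), so the only endomorphisms annihilated by all the $\MD_i$ are scalar multiples of the identity; since $\Tr(s)=0$ this forces $s=0$, and therefore $H=K$.

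The step I expect to be the main obstacle is the vanishing of the boundary term at $y=0$: one must confirm that matching singularity types genuinely puts $s$ into a weighted space with decay $\kappa'>\tfrac{1}{2}$ (so that $2\kappa'-1>0$), and that the singular factor $\frac{1}{2y}\ad_{\mfe^0}$ hidden inside $\MD_3^{\da}$ does not spoil the estimate. This is precisely where the a priori bound of Theorem \ref{fullestimate}, identifying $2$ as the first positive indicial root, is essential.
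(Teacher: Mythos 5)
Your proposal is correct and takes essentially the same route as the paper: convexity of the Donaldson functional, vanishing of the boundary terms in \eqref{Donaldsonfunctionalvariation} via the indicial-root fact that $s$ vanishes to order greater than $1$ at $y=0$ (first positive root being $2$), and irreducibility/stability of the underlying data to rule out a nonzero kernel. The only difference is cosmetic: the paper gets strict convexity directly from stability of the Higgs pair ($\Ker\,\MD_1\cap\Ker\,\MD_2$ trivial) and argues by contradiction, whereas you deduce $f''\equiv 0$ from $f'(0)=f'(1)=0$ and finish with Schur's lemma applied to the parallel endomorphism $s$ — logically equivalent, and your use of $f'(1)=0$ is in fact a cleaner formulation than the paper's stated ``$m(0)=m(1)=0$''.
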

\begin{proof}   Write $H = K e^s$ and $H_t=Ke^{ts}$.  By the indicial root computations for $\ML$, the order of vanishing of $s$ in $y$ is
  greater than $1$, hence the boundary terms in \eqref{Donaldsonfunctionalvariation} vanish.  Furthermore, the Higgs pair associated to
  $(\MD_1,\MD_2)$ is stable, so $\Ker\;\MD_1\cap \Ker\;\MD_2=\emptyset$. Hence if we set $m(t):=\MM(H_t,K)$, then
  $m'(0)=0$ and $m'' > 0$ if $s\not\equiv 0$. However, since $m(0) = m(1) = 0$, we see that $m \equiv 0$, so $H \equiv K$ after all. 
\end{proof}
	
\begin{corollary}
The maps $I_{\Opers}^{\beta}:\MM^{\beta}_{\mathrm{TBE}}\to\MM^{\beta}_{\Opers}$ is injective.
\end{corollary}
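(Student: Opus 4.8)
The plan is to deduce this injectivity statement directly from Proposition~\ref{Prop_injective}; the only real work is to translate the hypothesis that two solutions have the same image under $I_{\Opers}^{\beta}$ into the setup of that proposition. Recall from the construction in Section~\ref{Sec_KobayashiHitchinMap} that $I_{\Opers}^{\beta}$ sends a solution of the twisted Bogomolny equations to the oper structure carried by its underlying holomorphic data $\Theta=(\MD_1,\MD_2,\MD_3)$, equivalently to the $\MGC$-orbit of $\Theta$ under the identification \eqref{generalcomplexstructurequotientGC}. Thus if two solutions $(A,\phi,\phi_1)$ and $(A',\phi',\phi'_1)$ in $\MM^{\beta}_{\mathrm{TBE}}$ have the same image, their holomorphic data are complex-gauge equivalent.

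First I would apply a complex gauge transformation $g\in\MGC$ to arrange that both solutions carry the identical fixed holomorphic datum $\Theta$. In this trivialization the two solutions are recorded by their Hermitian metrics $H$ and $K$, each of which solves the real moment map equation $\Omega=0$ and each of which satisfies the tilted Nahm pole singularity at $y=0$ together with the required decay as $y\to\infty$. The point to check here is that fixing $\Theta$ preserves these boundary conditions: since both solutions have the same singularity type, the gauge transformation relating them is bounded with bounded logarithmic derivative near $y=0$, which is exactly the regularity under which the tilted Nahm pole condition is stable, as recorded after Definition~\ref{Nahmpoleunitarydefinition}.

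With $H$ and $K$ now two solutions of the twisted Bogomolny equations for the same holomorphic data and of the same singularity type, Proposition~\ref{Prop_injective} applies verbatim and yields $H=K$. Writing $H=Ke^{s}$, this says $s\equiv 0$, so the two original solutions differ only by the unitary gauge transformation implicit in the complex-gauge reduction; hence they are equivalent in $\mathcal{G}_0$, which is precisely injectivity of $I_{\Opers}^{\beta}$.

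The main obstacle is the middle step: one must confirm that the complex gauge transformation used to identify the two holomorphic data is compatible with the weighted function spaces and the boundary regularity, so that the difference endomorphism $s$ indeed vanishes to order greater than $1$ in $y$. This is exactly the input supplied by the indicial root analysis, which forces the boundary terms in the second variation \eqref{Donaldsonfunctionalvariation} of the Donaldson functional to vanish and thereby allows the convexity argument inside Proposition~\ref{Prop_injective} to run; everything else is a routine unwinding of the definition of $I_{\Opers}^{\beta}$.
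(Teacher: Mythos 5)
Your proposal is correct and follows the paper's route exactly: the paper states this corollary as an immediate consequence of Proposition~\ref{Prop_injective}, and your argument is precisely the unwinding of that implication (same image under $I_{\Opers}^{\beta}$ means complex-gauge-equivalent holomorphic data, so after fixing a common $\Theta$ the two solutions become two Hermitian metrics of the same singularity type, which Proposition~\ref{Prop_injective} forces to coincide). Your attention to the compatibility of the gauge transformation with the tilted Nahm pole boundary condition is a reasonable elaboration of a point the paper leaves implicit.
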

	
We have now established the main result,  that the maps $I_{\Opers}^\beta$ is a bijection.

\subsection{Properness}
We finally consider the properness of the maps $I^{\beta}_{\Opers}$.  We first define the topologies on
$\MOP^{\beta}$ and $\MM^{\beta}_{\TBE}$. 
	
By Theorem \ref{ClassificationofOpers}, we can write $\MOP^{\beta}=\{\na_{\mathbf{q}}\}$, where
$\mathbf{q}=(q_2,\cdots,q_n)\in \oplus_{i=2}^n H^{0}(K^i)$. Fix a metric on $K$ and use the $\MC^0$ norm on $\oplus_{i=2}^n H^{0}(K^i)$
to define the topology of $\MOP^{\beta}$. The moduli space $\MM^{\beta}_{\TBE}$ consists of pairs $\mathscr{A}:=(A,\phi,\phi_1)$ satisfying
the tilted Nahm pole boundary conditions. If $\msA_1$ and $\msA_2$ are two solutions, define 
$$
\|\msA_1-\msA_2\|_{\msN}:=\{\sup_k\|\msA_1-\msA_2\|_{\calX^k_{\mu, -\delta}}<\infty\ \mbox{for all}\ k\},
$$
for any fixed $\mu\in (-1,1)$ and $\delta\in(0,1)$. 

\begin{proposition}
\label{Prop_proper}
If $X\subset \MM_{\Opers}^{\beta}$ is a compact subset, set $Y:=(I^{\beta}_{\Opers})^{-1}X$. Then any sequence $\{y_n\}$ 
in $Y$ has a convergent subsequence $\{y_{n_k}\}$.
\end{proposition}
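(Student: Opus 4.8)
The plan is to leverage the bijectivity of $I^\beta_{\Opers}$ established above together with the uniform a priori estimates furnished by the continuity method. Since the map is a bijection, each $y_n \in Y$ is realized by a unique solution, which in turn is encoded by a Hermitian metric $H_n = H_{0,n}e^{s_n}$ solving $\Omega_{H_n}=0$, where $H_{0,n}$ is the admissible approximate metric associated to the oper $\na_{\mathbf{q}_n} = I^\beta_{\Opers}(y_n) \in X$ with $\mathbf{q}_n = (q_{2,n},\ldots,q_{n,n}) \in \oplus_{i=2}^n H^0(K^i)$. First I would pass to a subsequence using compactness in the base: since $X$ is compact in the $\calC^0$-topology on the finite-dimensional space $\oplus_{i=2}^n H^0(K^i)$, we may assume $\mathbf{q}_n \to \mathbf{q}_\infty$, corresponding to a point $x_\infty \in X$.

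Next I would transfer this convergence to the approximate solutions. By Theorem \ref{existadmissble1}, both $H_{0,n}$ and the error term $\Omega_{H_{0,n}}$ depend continuously on $\mathbf{q}_n$; hence $H_{0,n} \to H_{0,\infty}$ and $\Omega_{H_{0,n}} \to \Omega_{H_{0,\infty}}$, so in particular the family $\{\Omega_{H_{0,n}}\}$ is uniformly bounded in every weighted norm $\|e^{-\delta y}\,\cdot\,\|_{\calC^k}$. The crucial point is then that the a priori estimates of Proposition \ref{C0estimate} and Theorem \ref{fullestimate} depend only on $k,\alpha$ and on $\Omega_{H_0}$, and are independent of $t$; applying them to each $H_{0,n}$ and using the uniform bound just obtained yields constants $C_k$, independent of $n$, with $[s_n]_{\calX^k_{\kappa',-\delta}} \leq C_k$ for every $k$ and every $\kappa'$ below the first positive indicial root $\kappa = 2$.

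With these uniform bounds in hand, I would extract a convergent subsequence. Because the weighted H\"older inclusion $\calX^{k+1}_{\kappa',-\delta} \hookrightarrow \calX^{k}_{\mu,-\delta'}$ is compact whenever $\mu < \kappa'$ and $\delta' < \delta$, a diagonal argument over $k$ produces a subsequence (still denoted $s_n$) converging to some $s_\infty$ in every $\calX^k_{\mu,-\delta}$ with $\mu \in (-1,1)$; here one only needs $\mu < \kappa' < 2$, which is compatible with the range of weights in the definition of the $\msN$-norm. Passing to the limit in the quasilinear equation $\Omega_{H_{0,n}e^{s_n}}=0$ coming from \eqref{quasilinearformequation}, and using $H_{0,n}\to H_{0,\infty}$, gives $\Omega_{H_{0,\infty}e^{s_\infty}}=0$. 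Thus $H_\infty := H_{0,\infty}e^{s_\infty}$ is a genuine solution of the twisted Bogomolny equations with tilted Nahm pole boundary condition, defining a point $y_\infty \in \MM^\beta_{\TBE}$ with $I^\beta_{\Opers}(y_\infty)=x_\infty \in X$, so $y_\infty \in Y$. Finally, the convergence $s_n \to s_\infty$ in all $\calX^k_{\mu,-\delta}$ together with $H_{0,n}\to H_{0,\infty}$ translates, via the unitary-gauge formulae \eqref{unitarygaugeformula}, into convergence of the triples $\msA_n=(A_n,\phi_n,(\phi_1)_n)$ to $\msA_\infty$ in the $\msN$-norm, which is precisely $y_{n_k}\to y_\infty$ in $Y$.

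The step I expect to be the main obstacle is establishing genuinely \emph{uniform} a priori estimates across the sequence. The difficulty is that each solution $s_n$ lives over a different background $H_{0,n}$, so one must simultaneously control the variation of the backgrounds and the constants in the estimates. This hinges entirely on the continuous dependence of $\Omega_{H_{0,n}}$ on $\mathbf{q}$ from Theorem \ref{existadmissble1}: it guarantees that a compact family of opers produces a uniformly bounded family of error terms, which is exactly what is needed to make the bounds of Theorem \ref{fullestimate} independent of $n$. Once that uniformity is secured, the remaining convergence is a routine compactness and diagonalization argument, with the only subtlety being the choice of weight $\mu$ strictly below the first positive indicial root so that the compact weighted-H\"older embedding applies.
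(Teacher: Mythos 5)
Your proof is correct and follows essentially the same route as the paper's: compactness of $X$ together with the continuous dependence from Theorem \ref{existadmissble1} gives uniform control of the approximate-solution errors, the $t$-independent a priori estimates of Theorem \ref{fullestimate} then yield uniform weighted bounds, and a compact-embedding/diagonal argument extracts the convergent subsequence. Your write-up is in fact slightly more complete than the paper's terse proof, since you also pass to the limit in the quasilinear equation to verify that the limit point genuinely lies in $Y$.
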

\begin{proof}
Set $x_n = I^\beta_{\Opers}(y_n) \in X$. It is possible to construct model approximate solutions $\Omega_n$ uniformly over
compact subsets of $X$. Let us also fix a solution $\msA_0$ to 
the twisted Bogomolny equations. Since $X$ is compact, then by Theorem \ref{existadmissble1}, we have $|\Omega_n|_{C^0}\leq C$
  where $C$ is independent of $n$. By Theorem \ref{fullestimate}, we obtain that $\|y_n-\msA_0\|_{\calX^k_{\mu, -\delta}}\leq C_k$ for
  any $k\in\mathbb{N}$. Hence there exists a subsequence $y_{n_i}-\msA_0$ such that $y_{n_i}-\msA_0$ converges in the norm
  $|| \cdot ||_\msN$, so $y_{n_i}$ also convergent.
\end{proof}
	
\begin{theorem}
The map $I^{\beta}_{\Opers}:\MM_{\TBE}^{\beta}\to \MM_{\Opers}^{\beta}$ is a diffeomorphism.
\end{theorem}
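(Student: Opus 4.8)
The plan is to promote the bijectivity of $I^\beta_{\Opers}$, already in hand (surjectivity from Theorem \ref{Thm_surjective}, injectivity from the Corollary following Proposition \ref{Prop_injective}), to the assertion that it is a diffeomorphism. Since both moduli spaces are modeled on the finite-dimensional vector space $\oplus_{i=2}^n H^0(\Si,K^i)$, it suffices to exhibit the inverse map $J:\MM^\beta_{\Opers}\to\MM^\beta_{\TBE}$, sending an oper $\na_{\mathbf q}$ to its unique compatible solution, as a smooth map with everywhere-invertible differential, together with smoothness of $I^\beta_{\Opers}$ itself. The diffeomorphism then follows from the inverse function theorem, with properness used to match the two topologies.

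First I would realize $J$ explicitly as $\mathbf q\mapsto H_0(\mathbf q)e^{s(\mathbf q)}$. By Theorem \ref{existadmissble1} the admissible approximate metric $H_0(\mathbf q)$ is built by an explicit Borel summation whose terms solve linear algebraic equations with coefficients depending polynomially on $\mathbf q$ through $\vp_{\mathbf q}$; I would argue that this dependence is in fact smooth, not merely continuous, as a map into the weighted space $\calX^{k+2}_{\mu,-\delta}$. The correction $s(\mathbf q)$ is then the unique solution of $\Omega_{H_0(\mathbf q)e^s}=0$, and to produce it smoothly I would apply the implicit function theorem to
$$\Psi:\oplus_{i=2}^n H^0(\Si,K^i)\times\calX^{k+2}_{\mu,-\delta}\longrightarrow\calX^{k}_{\mu-2,-\delta},\qquad \Psi(\mathbf q,s)=\Omega_{H_0(\mathbf q)e^s}.$$
Here $\Psi$ is smooth in $s$ by the explicit analytic expression \eqref{quasilinearformequation} and smooth in $\mathbf q$ by the previous step. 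At any solution $s_*$ the partial derivative $D_s\Psi$ coincides, up to the bundle isomorphism $\Ad(e^{s_*/2})$, with the linearization $\ML_H$ of Proposition \ref{linearizationproperties} (the $t=0$ case of $\ML_{t,s}$). By part ii) of that proposition with $t=0$, an element of $\Ker\,\ML_H$ satisfies $\MD_i^{\da_H}s_1=0$ for $i=1,2,3$, and irreducibility of the oper holonomy forces $s_1=0$; combined with the Fredholm property \eqref{Fred1map} and the density-of-range argument from the openness discussion, $D_s\Psi$ is an isomorphism. The implicit function theorem then yields $s(\mathbf q)$ depending smoothly on $\mathbf q$, and injectivity guarantees this local solution agrees with the global one.

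With $J$ smooth and the forward map $I^\beta_{\Opers}$ smooth—extraction of the oper data from the fields $(A,\phi,\phi_1)$ being an algebraic, hence smooth, operation—the chain rule applied to $J\circ I^\beta_{\Opers}=\mathrm{id}$ shows that $dI^\beta_{\Opers}$ is everywhere an isomorphism. Properness of $I^\beta_{\Opers}$ (Proposition \ref{Prop_proper}) ensures that this continuous bijection is a closed map, hence a homeomorphism, so that the $\msN$-topology on $\MM^\beta_{\TBE}$ is genuinely compatible with the finite-dimensional smooth structure; the inverse function theorem then upgrades $I^\beta_{\Opers}$ to a diffeomorphism. I expect the principal difficulty to lie not in this formal scheme but in establishing the genuinely smooth (rather than merely continuous) dependence of $H_0(\mathbf q)$ on $\mathbf q$ in the weighted Hölder topology, and in confirming that $\ML_H$ remains an isomorphism at $t=0$, where the coercive term $t|\Ad(e^{s/4})s_1|^2_{H_0}$ of Proposition \ref{linearizationproperties} ii) degenerates and one must instead invoke irreducibility of the oper.
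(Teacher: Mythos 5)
Your proposal is correct, and its skeleton (bijectivity $+$ properness $+$ smoothness of the construction) matches the paper's; but the paper's own proof is essentially a three-line assertion, citing bijectivity (Theorem \ref{Thm_surjective}, Proposition \ref{Prop_injective}), properness (Proposition \ref{Prop_proper}), and then stating that ``following through the construction'' one sees the map is a diffeomorphism. What you supply, and the paper does not, is the actual mechanism behind that last sentence: realizing the inverse as $\mathbf{q}\mapsto H_0(\mathbf{q})e^{s(\mathbf{q})}$ and extracting smoothness of $s(\mathbf{q})$ from the implicit function theorem applied at $t=0$. This forces you to show that $\ML_H$ itself, rather than $\ML_{t,s}$ with $t>0$, is an isomorphism on the weighted spaces --- a statement the paper never makes, since its openness argument only treats $t>0$, where the coercive term $t|\Ad(e^{\frac{s}{4}})s_1|^2_{H_0}$ in Proposition \ref{linearizationproperties}~ii) kills the kernel for free. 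Your substitute --- kernel elements satisfy $\MD_i^{\da_H}s_1=0$ for $i=1,2,3$, and irreducibility of the oper holonomy (genus $>1$) then forces $s_1=0$ --- is precisely the argument the paper itself deploys in the uniqueness proof (stability gives $\Ker\,\MD_1\cap\Ker\,\MD_2=\{0\}$), so it is sound; combined with the Fredholm property and the formal self-adjointness giving dense range, the linearization is indeed invertible at $t=0$. You also correctly flag the second gap: Theorem \ref{existadmissble1} only asserts \emph{continuous} dependence of the approximate solution on $\mathbf{q}$, whereas your scheme needs smooth dependence; this does hold, because each term of the formal series solves a linear algebraic (indicial) equation whose data depend polynomially on $\mathbf{q}$ through $\vp_{\mathbf{q}}$, but it is a point the paper glosses over. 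In short, the paper's route buys brevity; yours closes the two holes that brevity conceals.
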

\begin{proof}
Implicit in the discussion above is the fact that the spaces $\MM_{\TBE}^\beta$ and $\MM_{\Opers}^\beta$ are both smooth manifolds.
The properness of $I^{\beta}_{\Opers}$ is Proposition \ref{Prop_proper}, and it is bijective by Theorem \ref{Thm_surjective}
and Proposition \ref{Prop_injective}.  Following through the construction in Section \ref{Sec_KobayashiHitchinMap}, we see that
$I^{\beta}_{\Opers}$ is not only  continuous, but actually a diffeomorphism.
\end{proof}

\end{section}

\section*{Appendix:The twisted Bogomolny equations}
In this Appendix, we discuss the tilted Nahm pole boundary condition for the twisted Kapustin-Witten equations, as well as the
Hermitian-Yang-Mills structure for its reduction, the twisted Bogomolny equations. We refer to
\cite{gaiotto2012knot,mikhaylov2018teichmuller} for more detailed explanations.
	
\subsection*{Tilted Nahm pole boundary condition}
Let $P$ denote a $G$-bundle over $M^4$, and $A$ a connection and $P$ a $1$-form over $P$. Then the twisted Kapustin-Witten
equations \cite{KapustinWitten2006} are
\begin{equation}
F_A-\Phi\we\Phi+\frac{t-t^{-1}}{2}d_A\Phi+\frac{t+t^{-1}}{2}\st d_A\Phi=0,\;d_A\st \Phi=0,
\end{equation}
	
We focus on the setting where $M=X\ti\RP_y$ where $X$ is a 3-manifold and $\RP_y = (0,\infty)$ with coordinate $y$.
Let $\mathfrak{g}$ be the Lie algebra of $G$.  We first consider the boundary condition on $X\times \{0\}\subset M$.
Given a principle embedding $\rho:\mathfrak{su}(2)\to\mathfrak{g}\otimes \mathbb{C}$,
given $x\in X$, let $\{\mathfrak{e}_a\}_{a=1,2,3}$ be an orthonormal basis of $T^*X$ and $\{\mathfrak{t}_a\}$ sections of 
the adjoint bundle $\gpp$ lying in the image of $\rho$ such that $[\mathfrak{t}_a,\mathfrak{t}_b]=\epsilon_{abc}\mathfrak{t}_c$.
We write the dreibein form $e:=\sum_{i=1}^3 \mft_i \mfe_i$, so $e$ gives an endomorphism $TX \to \gpp$. The definition of $e$
depends on the choice of $\rho.$  
	
\begin{definition}
For each $t=\tan(\frac{\pi}{4}-\frac{3}{2}\beta)$, a solution $(A,\Phi)$ to \eqref{TKW} over $M$ is a tilted Nahm pole solution
if for any point $x\in X$, there exist $\{\mathfrak{e}_a\}$, $\{\mathfrak{t}_a\}$ as above such that
 \begin{equation}
A=\frac{e}{y}\sin \beta+\MO(y^{-1+\ep}),\;\Phi=\frac{e}{y}\cos \beta+\MO(y^{-1+\ep}),\ \ \mbox{as}\ y \to 0,
\end{equation}
for some constant $\ep>0$.
\end{definition}
\begin{remark}
  For each $t$ there exist three possible corresponding values of $\beta$, and hence three different boundary conditions. For example,
  when $t=1$, we have $A\sim O(1),\Phi\sim \frac{e}{y}$, but the other two possibilities are
  $A\sim \pm\frac{\sqrt{3}}{2}\frac{e}{y},\;\Phi\sim\frac{1}{2}\frac{e}{y}$.
\end{remark}

\subsection*{The Dimensional Reduction}
Now consider the $4$-manifold $\mathbb{R}_{x_1}\times\Si_z\ti \RP_y $, with coordinates $(x_1,z,y)$. We write
$\hA=A+A_1dx_1$ and $\hP=\phi+\phi_1dx_1$ and consider $\mathbb{R}_{x_1}$ invariant solutions.  Fixing the orientation
$dx_1\we d\Si\we dy$, the twisted Kapustin-Witten equations reduce to 
\begin{equation}
\begin{split}
&F_A-\phi\we\phi+\frac{t-t^{-1}}{2}d_A\phi-\frac{t+t^{-1}}{2}(\st d_A\phi_1+\st[\phi,A_1])=0,\\
&d_AA_1-[\phi,\phi_1]+\frac{t-t^{-1}}{2}(d_A\phi_1+[\phi,A_1])-\frac{t+t^{-1}}{2}\st d_A\phi=0,\\
&d_A^{\st}\phi-[\phi_1,A_1]=0,
\label{GEBE}
\end{split}
\end{equation}
where $\st$ is the Hodge star operator on $\Si\ti\RP$. When $t=1$ and $A_1 = 0$, this recovers the previous extended Bogomolny equations and
there is a linear relationship between $\phi_1$ and $A_1$.
	
Introduce the condition $A_1-\tan \beta \phi_1=0$ and write $d_A=D_z+D_{\bz}+D_y$.  Using the local coordinate $\Si_z\ti\RP_y$,
the first equation in \eqref{GEBE} becomes
\begin{equation}
\begin{split}
&F_{z\bz}-[\phi_z,\phi_{\bz}]+\frac{t-t^{-1}}{2}(D_z\phi_{\bz}-D_{\bz}\phi_z)-\frac{t+t^{-1}}{2}\frac{i}{2}D_y\phi_1=0,\\
&F_{y\bz}+\frac{t-t^{-1}}{2}D_y\phi_{\bz}-\frac{t+t^{-1}}{2}i(D_{\bz}\phi_1+[\phi_{\bz},A_1])=0,\\
\label{1ndcoo}
\end{split}
\end{equation}
	
We write the second equation in \eqref{GEBE} in local coordinates:
\begin{equation}
\begin{split}
&\pa_y A_1+\frac{t-t^{-1}}{2}D_y\phi_1-\frac{t+t^{-1}}{2}(-2i)(D_z\phi_{\bz}-D_{\bz}\phi_z)=0\\
&D_{\bz}A_1-[\phi_{\bz},\phi_1]+\frac{t-t^{-1}}{2}(D_{\bz}\phi_1+[\phi_{\bz},A_1]-\frac{t+t^{-1}}{2}iD_y\phi_{\bz})=0
\label{2ndcoo}
\end{split}
\end{equation}
	
Finally, the third equation in \eqref{GEBE} becomes 
\begin{equation}
D_{\bz}\phi_z+D_z\phi_{\bz}=0.
\end{equation}

Setting $t=\tan(\frac{\pi}{4}-\frac{3}{2}\beta)$, we compute that $\frac{t-t^{-1}}{2}=-\tan(3\beta),\;\frac{t+t^{-1}}{2}=\frac{1}{\cos(3\beta)}$.
Set $f:=F_{A}-\phi\we\phi$; then taking an appropriate linear combination of the previous equations, we get 
\begin{equation}
\begin{split}
f_{z\bz}&=-\cot 2\beta (D_z\phi_{\bz}-D_{\bz}\phi_z),\\
iD_y\phi_1&=-\frac{1}{\sin\beta}(D_z\phi_{\bz}-D_{\bz}\phi_z)\\
iF_{yz}&=-\frac{\cos 2\beta}{\cos \beta}D_z\phi_1+2\sin\beta[\phi_z,\phi_1]\\
iD_y\phi_z&=2\sin\beta D_z\phi_1+\frac{\cos 2\beta}{\cos \beta}[\phi_z,\phi_1]\\
0&=D_z\phi_{\bz}+D_{\bz}\phi_z.
\label{equtions}
\end{split}
\end{equation}
	
We also impose the asymptotic boundary condition $\phi_1 \to 0$ as $y\to\infty$. This system still reduces to the Hitchin equation in that
limit, and hence determines a flat $SL(n,\mathbb{C})$ connection.
	
\begin{remark}
Any $t$ corresponds to three different values of $\beta$.  Since the vanishing condition $A_1-\tan \beta \phi_1=0$
  depends on $\beta$, there are actually three different equations \eqref{equtions} corresponding to the same value of $t$. 
\end{remark}
	
Now define the covariant derivatives
\begin{equation}
\begin{split}
\MD_1&=D_{\bz}-\phi_{\bz}\tan\beta\\
\MD_2&=D_z+\phi_z\cot \beta\\
\MD_3&=D_y-i\frac{\phi_1}{\cos \beta}
\end{split}
\label{threeoperators}
\end{equation}
We compute their commutators and the moment map equations:
\begin{equation}
\begin{split}
I_{12}=&-\sin 2\beta[\MD_1,\MD_2]=\sin 2\beta f_{z\bz}+\cos 2\beta(D_z\phi_{\bz}-D_{\bz}\phi_z)-(D_z\phi_{\bz}+D_{\bz}\phi_z),\;\\
I_{13}=&[\MD_1,\MD_3]=F_{y\bz}-\tan\beta D_y\phi_z-\frac{i}{\cos\beta}D_z\phi_1+\frac{i\sin\beta}{\cos^2 \beta}[\phi_z,\phi_1],\;\\
I_{23}=&[\MD_2,\MD_3]=F_{yz}+\cot \beta D_y\phi_z+\frac{i}{\cos \beta} D_z\phi_1+\frac{i}{\sin \beta}[\phi_z,\phi_1],\;\\
I_{mm}=&-(\cos^2 \beta[\MD_1,\MD_1^{\da}]+\sin^2\beta[\MD_2,\MD_2^{\da}]+\frac{1}{4}[\MD_3,\MD_3^{\da}])\\
=&\cos 2\beta f_{z\bz}-\sin 2\beta(\MD_z\phi_{\bz}-\MD_{\bz}\phi_z)-\frac{i}{2\cos \beta}D_y\phi_1.
\label{system}
\end{split}
\end{equation}
It is straight forward to check that \eqref{system} and \eqref{equtions} are equivalent. 
	
\begin{corollary}
If $\phi_1=0$, then the $y$-independent solution of \eqref{system} satisfies the Hitchin equations.
\label{boundaryconditioninfinity}
\end{corollary}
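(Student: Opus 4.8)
The plan is to substitute the two standing hypotheses --- $\phi_1\equiv 0$ and $y$-independence of every field --- directly into the explicit system \eqref{equtions}, which is equivalent to \eqref{system}, and to read off the Hitchin equations \eqref{Hitchinequation}. Under $\phi_1\equiv 0$ the right-hand sides of the third and fourth equations of \eqref{equtions} vanish identically, and under $y$-independence every $D_y$ applied to a field vanishes; thus the equations that genuinely couple the $y$-direction to the surface degenerate, and only the data intrinsic to $\Si$ survive. First I would exploit the second equation of \eqref{equtions}: since $\phi_1\equiv 0$ forces $D_y\phi_1=0$, its left-hand side vanishes, giving $D_z\phi_{\bz}-D_{\bz}\phi_z=0$. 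Combined with the fifth equation $D_z\phi_{\bz}+D_{\bz}\phi_z=0$, this yields the two vanishings $D_{\bz}\phi_z=0$ and $D_z\phi_{\bz}=0$ separately.

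The identity $D_{\bz}\phi_z=0$ is exactly the holomorphicity $\bar{\pa}_E\vp=0$ of the Higgs field appearing in \eqref{Hitchinequation}. For the curvature equation I would then feed $D_z\phi_{\bz}-D_{\bz}\phi_z=0$ into the first equation of \eqref{equtions}, which collapses to $f_{z\bz}=0$, i.e.\ $F_{z\bz}=[\phi_z,\phi_{\bz}]$. Recalling from Proposition \ref{complexgaugeactionchange} that in a unitary gauge $\phi$ is (anti-)Hermitian, so that $\phi_{\bz}=\phi_z^{\da_H}$ and $\vp=i\phi_z\,dz$, this is the moment-map equation $F_H+[\vp,\vp^{\da_H}]=0$ of \eqref{Hitchinequation}, up to the standard sign and conjugation bookkeeping. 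Both Hitchin equations therefore emerge from the surface part of the system.

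The step I expect to require the most care is showing that the residual content of \eqref{system} --- the commutators $I_{13},I_{23}$ with $\MD_3$, equivalently the third and fourth equations of \eqref{equtions} involving $F_{yz}$ and $D_y\phi_z$ --- imposes no constraint beyond Hitchin. With $\phi_1\equiv 0$ these reduce to $F_{yz}=0$ and $D_y\phi_z=0$, and since the fields are $y$-independent these are statements about $A_y$ alone, namely $D_z A_y=0$ and $[A_y,\phi_z]=0$. The clean way to dispose of them is to observe that precisely these identities allow a $y$-independent configuration to be placed in temporal gauge $A_y=0$ without disturbing $y$-independence; after this normalization $\MD_3=\pa_y$, the commutators $[\MD_3,\MD_1]=[\MD_3,\MD_2]=0$ and $[\MD_3,\MD_3^{\da}]=0$ hold automatically, and the full system \eqref{system} collapses to the two surface equations identified above. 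I would therefore record this gauge normalization at the outset, so that the substitution argument delivers the Hitchin equations with no leftover conditions.
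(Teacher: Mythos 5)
Your proposal is correct and follows essentially the same route as the paper's proof: substitute $\phi_1=0$ and $y$-independence into the equivalent system \eqref{equtions}, use the second and fifth equations to get $D_{\bz}\phi_z=D_z\phi_{\bz}=0$, and then the first equation to get $f_{z\bz}=0$, which together constitute the Hitchin system \eqref{Hitchinequation}. Your additional disposal of the residual equations $F_{yz}=0$ and $D_y\phi_z=0$ (as conditions on $A_y$ that permit the temporal gauge $A_y=0$ compatibly with $y$-independence) is a point the paper's proof leaves implicit, and is a welcome clarification rather than a different argument.
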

\begin{proof}
By \eqref{equtions}, if $\phi_1=0$, the $y$-independent solution satisfies
$$
f_{z\bz}=-\cot(2\beta)(D_z\phi_{\bz}-D_{\bz}\phi_z),\;D_z\phi_{\bz}-D_{\bz}\phi_z=0,\;D_z\phi_{\bz}+D_{\bz}\phi_z=0,
$$ 
which is equivalent to the Hitchin system \eqref{Hitchinequation}.
\end{proof}
	
\bibliographystyle{plain}
\bibliography{references}

\begin{thebibliography}{10}

\bibitem{ManolescuAbouzaid17}
Mohammed Abouzaid and Ciprian Manolescu.
\newblock A sheaf-theoretic model for {${SL}(2;\mathbb{C})$} {F}loer homology.
\newblock {\em arXiv preprint arXiv:1708.00289}, 2017.

\bibitem{atiyah1978geometry}
Michael Atiyah.
\newblock Geometry of {Y}ang-{M}ills fields.
\newblock In {\em Mathematical problems in theoretical physics ({P}roc.
  {I}nternat. {C}onf., {U}niv. {R}ome, {R}ome, 1977)}, volume~80 of {\em
  Lecture Notes in Phys.}, pages 216--221. Springer, Berlin-New York, 1978.

\bibitem{beilinson2005opers}
Alexander Beilinson and Vladimir Drinfeld.
\newblock Opers.
\newblock {\em arXiv preprint math/0501398}, 2005.

\bibitem{collier2014asymptotics}
Brian Collier and Qiongling Li.
\newblock Asymptotics of certain families of {H}iggs bundles in the {H}itchin
  component.
\newblock {\em arXiv preprint arXiv:1405.1106}, 2014.

\bibitem{collier2018conformal}
Brian Collier and Richard Wentworth.
\newblock Conformal limits and the bialynicki-birula stratification of the
  space of lambda-connections.
\newblock {\em arXiv preprint arXiv:1808.01622}, 2018.

\bibitem{Corlette88}
Kevin Corlette.
\newblock Flat {$G$}-bundles with canonical metrics.
\newblock {\em J. Differential Geom.}, 28(3):361--382, 1988.

\bibitem{ManolescuCote18}
Laurent Cote and Ciprian Manolescu.
\newblock A sheaf-theoretic model for {${SL}(2;\mathbb{C})$} {F}loer homology
  for knots.
\newblock {\em arXiv preprint arXiv:1811.07000}, 2018.

\bibitem{donaldson1985anti}
Simon~K. Donaldson.
\newblock Anti-self-dual {Y}ang-{M}ills connections over complex algebraic
  surfaces and stable vector bundles.
\newblock {\em Proc. London Math. Soc. (3)}, 50(1):1--26, 1985.

\bibitem{Donaldson1987Infinite}
Simon~K. Donaldson.
\newblock Infinite determinants, stable bundles and curvature.
\newblock {\em Duke Math. J.}, 54(1):231--247, 1987.

\bibitem{gaiotto2012knot}
Davide Gaiotto and Edward Witten.
\newblock Knot invariants from four-dimensional gauge theory.
\newblock {\em Advances in Theoretical and Mathematical Physics},
  16(3):935--1086, 2012.

\bibitem{Gukov2017bps}
Sergei Gukov, Du~Pei, Pavel Putrov, and Cumrun Vafa.
\newblock {B}{P}{S} spectra and 3-manifold invariants.
\newblock {\em arXiv preprint arXiv:1701.06567}, 2017.

\bibitem{He2017}
Siqi He.
\newblock A gluing theorem for the {K}apustin-{W}itten equations with a {N}ahm
  pole.
\newblock {\em arXiv preprint arXiv:1707.06182}, 2017.

\bibitem{HeMazzeo2017}
Siqi He and Rafe Mazzeo.
\newblock The extended {B}ogomolny equations and generalized {N}ahm pole
  boundary condition.
\newblock {\em arXiv preprint arXiv:1710.10645}, 2017.

\bibitem{HeMazzeo2018}
Siqi He and Rafe Mazzeo.
\newblock The extended {B}ogomolny equations with generalized {N}ahm pole
  boundary conditions, {II}.
\newblock {\em arXiv preprint arXiv:1806.06314}, 2018.

\bibitem{HeMazzeo2019}
Siqi He and Rafe Mazzeo.
\newblock Classification of {N}ahm pole solutions of the {K}apustin-{W}itten
  equations on {$S^1\times\Sigma\times\mathbb{R}^+$}.
\newblock {\em arXiv preprint arXiv:1901.00274}, 2019.

\bibitem{hitchin1987stable}
Nigel Hitchin.
\newblock Stable bundles and integrable systems.
\newblock {\em Duke mathematical journal}, 54(1):91--114, 1987.

\bibitem{hitchin1992lie}
Nigel Hitchin.
\newblock Lie groups and {T}eichm\"uller space.
\newblock {\em Topology}, 31(3):449--473, 1992.

\bibitem{Hitchin1987Selfdual}
Nigel~J. Hitchin.
\newblock The self-duality equations on a {R}iemann surface.
\newblock {\em Proc. London Math. Soc. (3)}, 55(1):59--126, 1987.

\bibitem{KapustinWitten2006}
Anton Kapustin and Edward Witten.
\newblock Electric-magnetic duality and the geometric {L}anglands program.
\newblock {\em Commun. Number Theory Phys.}, 1(1):1--236, 2007.

\bibitem{LeungRyosuke2018}
Naichung~Conan Leung and Ryosuke Takahashi.
\newblock Energy bound for kapustin-witten solutions on $
  s^{3}\times\mathbb{R}^+ $.
\newblock {\em arXiv preprint arXiv:1801.04412}, 2018.

\bibitem{martin1995kobayashi}
Lubke Martin and Teleman Andrei.
\newblock {\em The Kobayashi-Hitchin correspondence}.
\newblock World Scientific, 1995.

\bibitem{Mazzeo1991}
Rafe Mazzeo.
\newblock Elliptic theory of differential edge operators. {I}.
\newblock {\em Comm. Partial Differential Equations}, 16(10):1615--1664, 1991.

\bibitem{MazzeoWitten2013}
Rafe Mazzeo and Edward Witten.
\newblock The {N}ahm pole boundary condition.
\newblock {\em The influence of Solomon Lefschetz in geometry and topology.
  Contemporary Mathematics}, 621:171--226, 2013.

\bibitem{MazzeoWitten2017}
Rafe Mazzeo and Edward Witten.
\newblock The {KW} equations and the {N}ahm pole boundary condition with knot.
\newblock {\em arXiv preprint arXiv:1712.00835}, 2017.

\bibitem{mikhaylov2018teichmuller}
Victor Mikhaylov.
\newblock Teichm{\"u}ller {TQFT} vs. {C}hern-{S}imons theory.
\newblock {\em Journal of High Energy Physics}, 2018(4):85, 2018.

\bibitem{Simpson1988Construction}
Carlos~T. Simpson.
\newblock Constructing variations of {H}odge structure using {Y}ang-{M}ills
  theory and applications to uniformization.
\newblock {\em J. Amer. Math. Soc.}, 1(4):867--918, 1988.

\bibitem{Taubescompactness}
Clifford Taubes.
\newblock Sequences of {N}ahm pole solutions to the {S}{U}(2)
  {K}apustin-{W}itten equations.
\newblock {\em arXiv preprint arXiv:1805.02773}, 2018.

\bibitem{uhlenbeck1986existence}
Karen Uhlenbeck and S.-T. Yau.
\newblock On the existence of {H}ermitian-{Y}ang-{M}ills connections in stable
  vector bundles.
\newblock {\em Comm. Pure Appl. Math.}, 39(S, suppl.):S257--S293, 1986.
\newblock Frontiers of the mathematical sciences: 1985 (New York, 1985).

\bibitem{wentworth2014higgs}
Richard~A Wentworth.
\newblock Higgs bundles and local systems on riemann surfaces.
\newblock {\em arXiv preprint arXiv:1402.4203}, 2014.

\bibitem{witten2011fivebranes}
Edward Witten.
\newblock Fivebranes and knots.
\newblock {\em Quantum Topol.}, 3(1):1--137, 2012.

\bibitem{Witten2014LecturesJonesPolynomial}
Edward Witten.
\newblock Two lectures on the {J}ones polynomial and {K}hovanov homology.
\newblock {\em arXiv preprint arXiv:1401.6996}, 2014.

\bibitem{Witten2016LecturesGaugeTheory}
Edward Witten.
\newblock Two lectures on {G}auge theory and {K}hovanov homology.
\newblock {\em arXiv preprint arXiv:1603.03854}, 2016.

\end{thebibliography}
\end{document}